\definecolor{labelkey}{rgb}{0.6,0,1}
\newcommand{\bd}{\begin{displaymath}}
	\newcommand{\ed}{\end{displaymath}}
\newcommand{\be}{\begin{eqnarray}}
	\newcommand{\ee}{\end{eqnarray}}
\newcommand{\ba}{\begin{array}}
	\newcommand{\ea}{\end{array}}
\newtheorem{theorem}{\bf Theorem}[section]
\newtheorem{proposition}[theorem]{\bf Proposition}
\newtheorem{definition}[theorem]{\bf Definition}
\newtheorem{example}[theorem]{Example}
\newtheorem{remark}[theorem]{\bf Remark}
\providecommand{\keywords}[1]{\textbf{\textit{Keywords: }} #1}
\begin{document}
	
	\title{Multidimensional smoothness indicators for first-order Hamilton-Jacobi equations
	}

	\author{Maurizio Falcone\footnotemark[1] \and Giulio Paolucci\footnotemark[1] \and Silvia Tozza\footnotemark[2]}
	
	\maketitle
	
	\renewcommand{\thefootnote}{\fnsymbol{footnote}}
	
	\footnotetext[1]
	{
		Dipartimento di Matematica,  ``Sapienza" Universit{\`a}  di Roma,
		P.le Aldo Moro, 5 - 00185 Rome, Italy
		({\tt e-mail:falcone@mat.uniroma1.it,paolucci@mat.uniroma1.it})
	}
		
	\footnotetext[2]
	{
		Dipartimento di Matematica e Applicazioni “Renato Caccioppoli”,  Universit{\`a} degli Studi di Napoli Federico II, 
		Via Cintia, Monte S. Angelo, I-80126 Napoli, Italy 
		({\tt e-mail: silvia.tozza@unina.it})
	}

\begin{abstract}
The lack of smoothness is a common feature of weak solutions of nonlinear hyperbolic equations and is a crucial issue in their approximation. This has motivated several efforts  to  define appropriate indicators, based on the values of the approximate solutions, in order to detect the most troublesome regions of the domain.
This information helps  to adapt the approximation scheme in order to avoid spurious oscillations when using high-order schemes.
In this paper we propose a genuinely multidimensional extension of the WENO procedure in order to overcome the limitations of indicators based on dimensional splitting. Our aim is to obtain new regularity indicators for problems in 2D and apply them to a class of ``adaptive filtered'' schemes for first order evolutive Hamilton-Jacobi equations.  According to the usual procedure, filtered schemes are obtained by a simple coupling of a high-order scheme and a monotone scheme. The mixture is governed by a filter function $F$ and by a switching parameter $\varepsilon^n=\varepsilon^n({\Delta t,\Delta x})>0$ which  goes to 0 as $(\Delta t,\Delta x)$ is going to 0. 
The adaptivity is related to the smoothness indicators and allows to tune automatically the switching parameter $\varepsilon^n_j$  in time and space.
Several numerical tests on critical situations in 1D and 2D are presented and confirm the effectiveness of the proposed indicators and the efficiency of our scheme.\\

\noindent \keywords{
	High-order Filtered schemes, Hamilton-Jacobi equations, 2D-Smoothness indicators, Front propagation}
\end{abstract}


\section{Introduction}
\label{sec:intro}
The approximation of hyperbolic problems has to deal with non smooth or even discontinuous solutions so high-order accurate schemes are often difficult to implement and the proofs of convergence for accurate approximation schemes are limited to one dimensional cases, when they are available. From the practical point of view, a big effort has been made to develop smoothness indicators to detect the singularities and/or discontinuities of weak solutions in order to adapt the schemes and obtain high-order accuracy in the regions where the solution is regular, see e.g. \cite{JS96, JP00,Shu09}. Most of these indicators are based on 1D arguments and can be applied to multidimensional problems by dimensional splitting \cite{ABM10}. 
However, the schemes based on dimensional splitting work on every dimension by separation of variables and then they collect all the information. In this way, they are not able to detect truly multidimensional singularities that are not alligned with the axis. 
Our goal is to develop genuinely multidimensional smoothness indicators and prove their properties in the 2D setting (for genuinely multidimensional smoothness indicators, see also \cite{CSV19}). Moreover, we will show how these indicators are useful in the approximation of a class of  first order time-dependent Hamilton-Jacobi (HJ) equations, in the form
\begin{equation}
\label{eq:HJ_2}
\left\{
\begin{array}{ll}
v_t+H(x,y,v_x,v_y)=0,\qquad&(t,x,y)\in[0,T]\times\mathbb{R}^2, \\
v(0,x,y)=v_0(x,y),&x \in \mathbb{R}^2,
\end{array}
\right.
\end{equation}
where the Hamiltonian $H$ and the initial data $v_0$ are sufficiently regular functions (usually at least Lipschitz continuous) in order to secure an existence and uniqueness result for the viscosity solution (see e.g. \cite{B94}). \\
It is well-known that in general this problem does not admit classical solutions independently on the regularity of the initial condition, since its solutions may develop discontinuities in the gradient in finite time. This makes the development of accurate approximations rather challenging and the definition of appropriate \emph{smoothness indicators} a topic of primary interest, in order to prevent spurious oscillations near discontinuities for high-order methods and to exploit the good properties of the lower order (and convergent) monotone discretization.

Let us recall that starting from the pioneering works \cite{HEOC86,OS91} and  \cite{JS96,JP00,HS99} on high-order Essentially Non-Oscillatory (ENO) and Weighted ENO (WENO) schemes  respectively, for conservation laws and related HJ equations, a lot of efforts has been made in the last two decades to develop efficient smoothness indicators for grid valued functions. For the numerical approximation of conservation laws, that typically uses cell averages in the reconstructions, many slightly different versions of the indicators have been proposed and analyzed. We cite among the others the study of Arandiga et al.~\cite{ABM10}, focusing on the role of the desingularization parameter and the definition of optimal linear coefficients, and the proposals of Henrick et al.~\cite{HAP05} and of Borges et al.~\cite{BCCD08}, leading respectively to the so-called WENO-M and WENO-Z schemes, where the goal is to increase the accuracy at critical points. Moreover, a \emph{central version} of the reconstruction, which is really an ENO-type construction, has been first proposed in \cite{LPR99} and then improved later in \cite{LPR00} by the same authors in a compact version, which cuts the dependence of nonlinear weights from the point of reconstruction thus simplifying its application to adaptive grids. 

WENO smoothness indicators have been applied to HJ equations 
with minor changes, as for example in the work of Carlini et al.~\cite{CFR05} where an application to semi-lagrangian schemes is presented, but, at least to our knowledge, the theory has not been formally and rigorously extended. This fact has caused some confusion in the related literature, since the scaling factor used in the definition of the indicators, when working with non-differentiable functions, is usually mistaken. In order to fill this gap, in \cite{FPT18a,PaolucciPhD} the authors proposed a rather detailed analysis of the considered indicators and then applied the constructions, in a slightly different setting, to define new Adaptive Filtered (AF)  schemes, renewing the definition of \cite{BFS16}. In our approach the smoothness indicators are necessary in order to detect the regions in which it is safe to use the values of the numerical solution to compute the switching parameter $\varepsilon^n$, which is automatically computed and does not depend on the problem, differently from the scheme in \cite{BFS16}. The idea of filtering a high-order, possibly unstable, scheme with a convergent monotone scheme was probably introduced in \cite{LS95} and formulated in the present setting by Abgrall \cite{A09}, where a blending similar to the approach of this paper is used to obtain a convergence result without error estimates. Filtered schemes belong to the class of $\varepsilon$-monotone schemes (see \cite{AA00} for an example) 
and can be proven convergent thanks to the results of Crandall and Lions \cite{CL84} and Barles and Souganidis \cite{BS91}. We conclude mentioning  also the works by  Oberman and Salvador \cite{OS15} and Bokanowski et al. \cite{BFFGKZ15} on first-order stationary HJ equations and by Froese and Oberman \cite{FO13} on the Monge-Amp\`ere equation.

In this work  we introduce a new definition of multidimensional smoothness indicators devised for functions which may present some discontinuity in the gradient, we analyze their properties and we extend our class of AF schemes \cite{FPT18a} to more spatial dimensions.  In particular, we propose a natural extension of the definition in \cite{JP00} to structured two-dimensional grids. For a first extension of the WENO procedure to multidimensional unstructured grids we cite the work by Zhang and Shu \cite{ZS03}. In our formulation we give a very compact explicit formula for the case $r=2$, where $r$ is the order of the approximation of the function, which can be used for an easy implementation. The construction of the scheme is rather simple and various examples for the composing schemes, up to 4th-order accuracy, are presented in detail. Finally, all the proposed definitions are tested and discussed on a series of benchmarks, which testify the reliability of our indicators and the properties of high-order consistency and convergence of the adaptive filtering method. 

The paper is organized as follows: in Sect. \ref{sec:ind_2D} we describe the construction of our new 2D-indicators, designed to detect the regularity of the gradient of the solution. We prove there that the new indicators are a natural extension of the 1D indicators used in the literature. We continue in Sect.  \ref{sec:AFS_2D} exploiting the properties of the indicators to define our multidimensional AF scheme and present in detail all its elementary blocks. 
In Sect.~\ref{sec:tests} we collect some numerical tests on the newly defined smoothness indicators and on some applications for the AF scheme, comparing our scheme with the state-of-the-art methods. Finally, Sect.~\ref{sec:conclusions} contains some conclusions and future perspectives. 
In addition, \ref{sec:ind_1D} is devoted to a brief review of 1D indicators 
for reader convenience.
\section{Construction and analysis of regularity indicators in high dimension}
\label{sec:ind_2D}
Let us start by giving a multidimensional extension of the smoothness indicators introduced in \cite{JP00} and analyzed in \cite{FPT18a} for the 1D case, which are necessary for the definition of the AF scheme that will be introduced in Sect. \ref{sec:AFS_2D}. 

We consider a uniform grid in space $(x_j,y_i)=(j\Delta x,i\Delta y)$, $j, i \in \mathbb Z$, and a function $f:\mathbb R^2 \to \mathbb R$. 
Before proceeding with the construction, let us recall some important features about multivariate interpolation. 
As it is well known, there are many possibilities to define polynomials in two dimensions. 
For example, we could fix the total degree $r$ of the polynomial and consider a triangular array of points (then using polynomials in $\mathbb P_r(\mathbb R^2)$), or, as we will do in our approach, we can fix the degree $r$ in each variable and define the 2D-polynomial as the tensor product of one-dimensional polynomials ($P\otimes Q \in \mathbb Q_r(\mathbb R^2)$, with $P,Q \in \mathbb P_r(\mathbb R)$). 
Clearly, using the last approach the number of points involved in the stencil of the reconstruction increases exponentially (considering a square grid, if $n$ is the number of points of the 1D stencils, then $n^2$ is the cardinality of the 2D stencil). 
Note that in both cases the problem is well posed. In fact, we can define a unique polynomial interpolating a given function $f(x,y)$ on the points of the stencil with the desired degree. Clearly, some assumptions on the disposition of the points must be made. For example, the points must not lie on the same line, which is trivially the case for uniform Cartesian grids. This is indeed our case of study, in which we are working on structured grids.

Let us consider the general case of a rectangular stencil $\mathcal S=\{x_0,\dots,x_n\}\times\{y_0,\dots,y_m\}$.
Then, using the \emph{Newton form}, we can define the polynomial of degree $n+m$ interpolating a given function $f$ as
\begin{equation}
P(x,y) := \sum_{s=0}^{n}\sum_{t=0}^m \omega^x_{t-1}(x)\omega^y_{s-1}(y) f[x_0,\dots,x_t;y_0,\dots,y_s],
\end{equation}
where $\omega^\rho_k(\rho)=\omega^\rho_{k-1}(\rho-\rho_k)$, $\omega^\rho_{-1}=1$, for $\rho=x,y$, and the two-dimensional divided difference $f[\cdot;\cdot]$ are computed as in the one-dimensional case, keeping each time one of the two variables fixed and computing the divided difference with respect to the free variable, that is, for example
\begin{align*}
\label{div_diff_2D}
&f[x_t,y_s]:=f(x_t,y_s),\qquad \qquad t=0,\dots,m,\ s=0,\dots,n\\
&f[x_0,\dots,x_{t};y_s]:=\frac{f[x_{1},\dots,x_{t};y_s]-f[x_0,\dots,x_{t-1};y_s]}{x_{t}-x_0},\\
&f[x_0,\dots,x_t;y_0,\dots,y_s]:=\frac{f[x_1,\dots,x_t;y_0,\dots,y_{s}]-f[x_0,\dots,x_{t-1};y_0,\dots,y_{s}]}{x_{t}-x_0}.
\end{align*}
The same can be done with respect to the second variable. 
Now, if we want to define a smoothness indicator of a function $f$, as done in the one-dimensional case, we have to focus our attention on a single cell of the grid.  For example, in the cell $[x_{j-1},x_j]\times[y_{i-1},y_i]$ we define the following \emph{smoothness coefficients: }
\begin{equation}\label{beta_2D}
\beta_{k,w} := \sum_{\alpha\in \mathcal{A}} \int_{x_{j-1}}^{x_j}\int_{y_{i-1}}^{y_i} \Delta x^{\gamma_1}\Delta y^{\gamma_2} \left( \partial_x^{\alpha_1}\partial_y^{\alpha_2}P_{k,w}(x,y)\right)^2 dx dy,
\end{equation}
for $k=0,\dots,n-1$ and $w=0,\dots,m-1$, where $P_{k,w}$ is the interpolating polynomial on the stencil $\mathcal S_{k,w}=\{x_{j+k-n},\dots,x_{j+k}\}\times\{y_{i+w-m},\dots,y_{i+w}\}$, $\alpha=(\alpha_1,\alpha_2)$ is a multi-index belonging to the set $\mathcal{A} := \{(\alpha_1,\alpha_2) : \alpha_1=0, \dots, n; \alpha_2=0,\dots, m; |\alpha|\geq 2\}$, and $\gamma_1$, $\gamma_2$ must be chosen (depending on $\alpha_1$ and $\alpha_2$, respectively) 
in order to satisfy  the following properties (see Prop. \ref{prop_beta} for the 1D indicators):
\begin{itemize}
\item $\beta_{k,w}=O(\Delta^2)$ if the function is smooth in $\overline{\mathcal S}_{k,w}$
\item $\beta_{k,w}=O(1)$ if there is a singularity in $\overline{\mathcal S}_{k,w}$,
\end{itemize}
where $\overline{\mathcal S}_{k,w}=[x_{j-n+k},x_{j+k}]\times[y_{i-m+w},y_{i+w}]$ and $\Delta :=\max\{\Delta x, \Delta y\}$. 
Note that in \eqref{beta_2D} we restricted the summation to multi-indices $\alpha$ such that $|\alpha|\geq 2$ since the computation of lower order derivatives can be avoided as they are not useful in detecting discontinuities in the gradient.  

Before giving the proof of a  rigorous result, showing how to correctly choose $\gamma_1$ and $\gamma_2$ in the scaling factor in \eqref{beta_2D}, 
let us first introduce some notations in the case of singularities located along a curve $\Gamma(s)=(x(s),y(s))$, where $s$ is a proper parametrization of the curve. For any point $\xi\in \Gamma$ we consider a ball $B_\delta(\xi)$ centered in $\xi$ and of radius $\delta=O(\Delta)$, such that $\mathcal S_{k,w} \subset B_\delta(\xi)$, for $k=0,\dots,n-1$, $w=0,\dots,m-1$, and the two parts in which is divided by $\Gamma$, namely $B_\delta^-(\xi)$ and $B_\delta^+(\xi)$, such that $B_\delta^-(\xi)\bigcup B_\delta^+(\xi)=B_\delta(\xi)\setminus \Gamma$, as illustrated in Fig. \ref{Gamma_partial_eta}. In the sequel, we will drop the dependence on $\xi$ and $\delta$, since it should not cause any confusion. Moreover, we define the \emph{right (left) gradient with respect to} $\Gamma$ as the vector obtained passing to the limit from the right (left) of $\Gamma$, which is
\begin{equation}
\label{left_right_grad}
\nabla^\pm f(\xi):=\lim_{(x,y)\to \xi\atop (x,y)\in B^\pm} \nabla f(x,y).
\end{equation}
\begin{figure}[t!]
\centering
\includegraphics[trim=50 500 50 120,clip=true,scale=0.65]{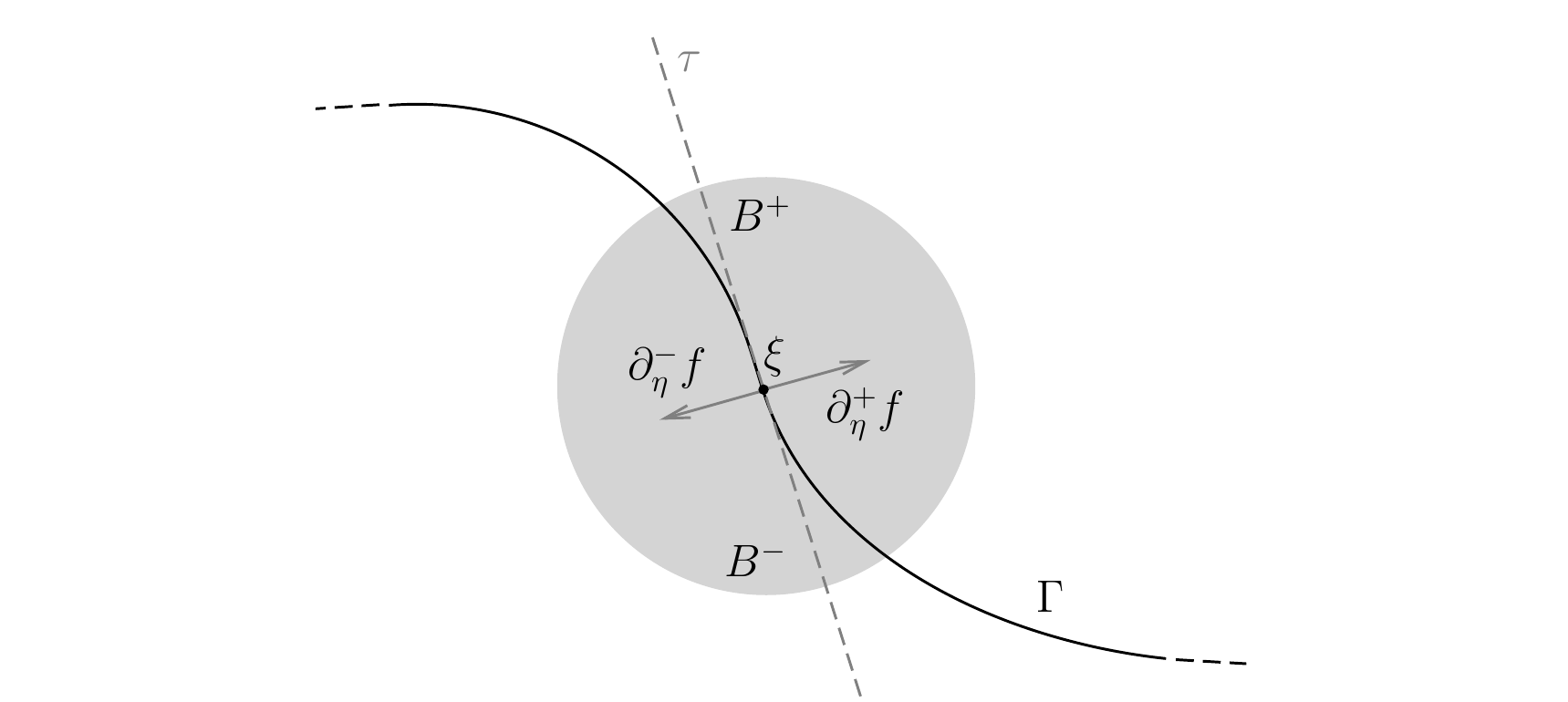}
\caption{\footnotesize{The neighborhood $B$ and the two parts in which it is divided by the curve $\Gamma$.} \label{Gamma_partial_eta}}
\end{figure}
Then, we can give the followings
\begin{definition}
Let us consider a continuous curve $\Gamma$ and a continuous function $f$. We say that $\Gamma$ is a \emph{singularity curve} for $f$ if, for any $\xi \in \Gamma$, $f\in C^k\left(B\setminus\Gamma\right)$, for $k\geq 1$, and we have
\begin{equation}
\mathcal D(\xi):=\nabla^+f(\xi)-\nabla^-f(\xi)\not=(0,0).
\end{equation}
\end{definition}
It is worth to recall that, if the curve $\Gamma \in C^1$, then the tangent and normal unit vectors at a point $\xi$, $\tau(\xi)$ and $\eta(\xi)$, respectively, are well defined and can be computed as
\begin{equation}
\tau(\xi)=\left(\frac{\dot x}{\sqrt{\dot x^2+\dot y^2}},\frac{\dot y}{\sqrt{\dot x^2+\dot y^2}}\right),\qquad \qquad \eta(\xi)=(-\tau_2,\tau_1).
\end{equation}
Consequently, we can rewrite the \emph{jump in the gradient} $\mathcal D(\xi)$ in terms of the discontinuity of the normal derivative of $f$ at $\xi$, that is $\partial_\eta f(\xi)$. More precisely, computing the left and right derivative in the normal direction as
\begin{equation}
\label{partial_eta_pm}
\partial_\eta^\pm f(\xi):=\lim_{\Delta \to 0} \frac{f(\xi\pm\eta \Delta)-f(\xi)}{\Delta}=\pm \nabla^\pm f(\xi) \cdot \eta(\xi),
\end{equation}
we can define the \emph{normal jump} as
\begin{equation}
\mathcal D_\eta(\xi):= \mathcal D(\xi) \cdot \eta(\xi)=\partial_\eta^+f(\xi) + \partial_\eta^- f(\xi),
\end{equation}
which is a scalar quantity and more easily computed, involving only one-dimensional derivatives. Moreover, keeping in mind that we are working with Cartesian structured grids, since the curve is locally linearizable and it can be approximated by its tangent line, we have that the points of the stencil around a specific point $\xi \in \Gamma$ are definitely inside $B^+$ or $B^-$, or at least remain on the tangent direction.

We are now ready to prove the reliability of our definition in the case at hand, stating the different behavior of the indicators $\beta_{k,w}$ depending on the local regularity of the function $f$, as previously discussed, all thanks to the appropriate choice of the scaling factor in \eqref{beta_2D}. For the reader convenience, in \ref{sec:ind_1D} we recalled the 1D counterpart, namely Prop. \ref{prop_beta}, which has been proved in \cite{FPT18a}. To simplify the computations, we will consider the case of a uniform grid with square stencils, that is $\Delta x=\Delta y=\Delta$ and $n=m=r$, with $r>1$. 

\begin{proposition}
\label{prop_beta_2D}
Let us assume that $f$ is a continuous function in its domain and  sufficiently regular outside $\Gamma \in C^1$ that is a singularity curve for $f$. Moreover, let us assume that at least one of the second derivatives of $f$ is non-null for $(x,y) \in B\setminus \Gamma$ and let $\beta_{k,w}$ be given by (\ref{beta_2D}) with $\gamma_i=2(\alpha_i-1)$, for $i=1,2$. Then, for $k=0,\dots,r-1$, $w=0,\dots,r-1$, the followings are true:
\begin{enumerate}[i)]
\item If $\ \Gamma \bigcap \overline {\mathcal S}_{k,w}  = \emptyset \quad \Rightarrow \quad\beta(f)_{k,w}=O(\Delta^2)$;
\item If $\ \Gamma \bigcap \overline{ \mathcal S}_{k,w} \not = \emptyset\quad \Rightarrow \quad \beta(f)_{k,w}=O(1)$.
\end{enumerate}
\end{proposition}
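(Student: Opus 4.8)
The plan is to express the derivatives $\partial_x^{\alpha_1}\partial_y^{\alpha_2}P_{k,w}$ through the two-dimensional (tensor) divided differences of the Newton form and to estimate their order in $\Delta$ separately in the two regimes; the choice $\gamma_i=2(\alpha_i-1)$ is then exactly what converts these estimates into the claimed behaviours. Since the construction is a tensor product, I would first record the elementary scaling rule: under the change of variables $x=x_0+\Delta\,\tilde x$, $y=y_0+\Delta\,\tilde y$ onto a fixed reference stencil, one has $f[x_0,\dots,x_t;y_0,\dots,y_s]=\Delta^{-(t+s)}\,\tilde f[\tilde x_0,\dots,\tilde x_t;\tilde y_0,\dots,\tilde y_s]$, because divided differences scale like the corresponding derivatives. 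This isolates all $\Delta$-dependence into the explicit prefactor and reduces every estimate to bounding a divided difference of a fixed profile on an $O(1)$ grid.

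First, in case (i), when $\Gamma\cap\overline{\mathcal S}_{k,w}=\emptyset$ the function is smooth on a neighbourhood of $\overline{\mathcal S}_{k,w}$, so the tensor divided difference of order $(t,s)$ has leading term $\tfrac{1}{t!\,s!}\partial_x^t\partial_y^s f(\zeta)$ and is $O(1)$; hence $\partial_x^{\alpha_1}\partial_y^{\alpha_2}P_{k,w}=O(1)$. Substituting into \eqref{beta_2D}, the integral over a cell of area $\Delta^2$ contributes a factor $\Delta^2$, so the $\alpha$-term is $\Delta^{\gamma_1+\gamma_2}\,\Delta^2\,O(1)=\Delta^{2|\alpha|-2}\,O(1)$. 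As $|\alpha|\ge2$, the slowest-decaying contribution comes from $|\alpha|=2$, giving $\beta(f)_{k,w}=O(\Delta^2)$; the hypothesis that some second derivative of $f$ is non-null on $B\setminus\Gamma$ guarantees this order is attained, exactly as in the one-dimensional Prop.~\ref{prop_beta}.

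Second, and this is the heart of the argument, in case (ii) I would exploit that $f$ is continuous across $\Gamma$ and only its \emph{gradient} jumps. Using the tangent-line approximation of $\Gamma\in C^1$ at scale $\Delta$ (already justified above), I would write $f=f_{\mathrm{reg}}+\mathcal D_\eta\,d_+$ on $B$, where $d$ is the signed distance to the linearized curve, $d_+=\max(d,0)$, and $\mathcal D_\eta$ is the normal jump from \eqref{partial_eta_pm}; subtracting the kink removes the gradient discontinuity, so $f_{\mathrm{reg}}$ is smooth across $\Gamma$ and contributes only $O(1)$ divided differences as before, while higher-order (second-derivative) jumps yield strictly less singular $O(\Delta^{-(|\alpha|-2)})$ terms. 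The crucial estimate is for the truncated linear profile: rescaling turns $d_+$ into $\Delta\,\psi$, with $\psi$ a bounded piecewise-linear function on the fixed stencil whose divided differences are $O(1)$ and annihilate the linear part (the tensor divided difference of order $(t,s)$ with $|\alpha|\ge2$ kills every monomial $x^ay^b$ with $a<t$ or $b<s$). This gives the sharp bound $f[x_0,\dots,x_t;y_0,\dots,y_s]=O(\Delta^{-(|\alpha|-1)})$, one power of $\Delta$ better than the naive $\Delta^{-|\alpha|}$ precisely because $f$ has no jump, only a kink. Hence $\partial_x^{\alpha_1}\partial_y^{\alpha_2}P_{k,w}=O(\Delta^{-(|\alpha|-1)})$, and the $\alpha$-term of \eqref{beta_2D} becomes $\Delta^{2|\alpha|-4}\,\Delta^2\,O(\Delta^{-2(|\alpha|-1)})=O(1)$ uniformly in $\alpha$; summing the finitely many terms yields $\beta(f)_{k,w}=O(1)$.

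Finally I would check that $\beta(f)_{k,w}$ stays genuinely of order one, so that it separates from the $O(\Delta^2)$ regular value. Since $\mathcal D_\eta\neq0$ and $\Gamma$ crosses $\overline{\mathcal S}_{k,w}$, the grid nodes straddle the curve and at least one second-order divided difference of $d_+$ is nonzero (a pure second difference in the coordinate transverse to the tangent, or the mixed difference when the normal $\eta$ has two nonzero components), so the corresponding $|\alpha|=2$ term is bounded below by a positive constant while the smooth part it competes with is only $O(1)$. The main obstacle throughout is this singular estimate: obtaining the exponent $-(|\alpha|-1)$ rather than $-|\alpha|$ — which is precisely where the mistaken scaling in the literature originates — and doing so uniformly with respect to the orientation of $\Gamma$ relative to the Cartesian grid. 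The decomposition $f=f_{\mathrm{reg}}+\mathcal D_\eta\,d_+$ together with the reference-grid rescaling is what makes this transparent, reducing the genuinely two-dimensional estimate to finitely many bounded divided differences of a fixed truncated-linear profile.
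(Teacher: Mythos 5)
Your proof is correct in substance and reaches both conclusions, but for the crucial point (ii) it takes a genuinely different route from the paper. The paper first uses the identity (\ref{fh_il2}) to collapse every undivided difference onto second-order ones (4-point squares and 3-point one-directional stencils) and then runs an exhaustive case analysis (all nodes on one side; 2--2 and 3--1 splits; nodes on $\Gamma$; singularity off the grid), Taylor-expanding each node value about its normal projection $\pm\eta\Delta$ to extract an explicit leading term proportional to $\mathcal D_\eta$; the $O(1)$ bound then follows from the $1/(\Delta x\,\Delta y)$ prefactor in (\ref{beta_2D_risc}). You instead subtract the canonical kink, $f=f_{\mathrm{reg}}+\mathcal D_\eta\,d_+$, and invoke the annihilation of affine functions by tensor divided differences of total order $\geq 2$, which yields the gain of one power of $\Delta$ for all orders $(t,s)$ at once and avoids the case distinctions entirely; your explicit lower-bound paragraph is in fact more careful than the paper, which leaves the non-degeneracy of the leading coefficients (e.g.\ the factors $1/\eta_2$) largely implicit. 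Two points deserve to be made explicit in your argument: first, $f_{\mathrm{reg}}$ is regular across the linearized curve only because the gradient jump is purely normal, $\mathcal D(\xi)=\mathcal D_\eta(\xi)\,\eta(\xi)$, which follows from differentiating the continuity of $f$ along $\Gamma$ in the tangential direction (the paper uses this silently when its $f_x$-terms cancel); second, for a genuinely curved $\Gamma$ with varying jump, $f_{\mathrm{reg}}$ retains an $o(1)$ residual kink plus an $O(1)$ kink supported in the $o(\Delta)$-wide sliver between $\Gamma$ and its tangent line, so your decomposition is exact only after the same idealization the paper adopts (stencil nodes lie on matching sides of $\Gamma$ and of the tangent line). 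Neither issue invalidates the argument, since both contribute only lower-order corrections, but they are the price of the cleaner, case-free structure. What each approach buys: the paper's computation produces the exact leading coefficients of the second-order undivided differences (useful for understanding when the indicator degenerates, e.g.\ axis-aligned singularities), while yours isolates in one lemma the reason the exponent is $-(|\alpha|-1)$ rather than $-|\alpha|$, which is precisely the scaling subtlety the paper says is often mistaken in the literature.
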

\begin{proof}
Let us consider $r>1$ and a generic square stencil $\mathcal S_{k,w}=\{x_{j+\nu},\dots,x_{j+\nu+r}\}\times\{y_{i+\mu},\dots,y_{i+\mu+r}\}$, for $k=0,\dots,r-1$, $w=0,\dots,r-1$, where $\nu=k-r$ and $\mu=w-r$, around a point $(x_j,y_i)$, $j$, $i\in \mathbb Z$, which is not necessarily the center of the stencil. Before proceeding with the proof, let us rewrite the integrals in (\ref{beta_2D}) in order to highlight and isolate the dependence of the various terms of the summation on the discretization parameters. Let us define 
\begin{equation}
\label{def_fh}
f_h(\theta):=f(x_j+\theta_1\Delta x, y_i+\theta_2 \Delta y),\quad \textrm{ for }\theta=(\theta_1,\theta_2) \in \mathbb Z^2,
\end{equation}
which allows to write
\begin{equation}
\label{divided_undivided}
f_h[\nu,\dots,\nu+t;\mu,\dots,\mu+s]_*:= f[x_{j+\nu},\dots,x_{j+\nu+t};y_{i+\mu},\dots,y_{i+\mu+s}]\;{t!s!\Delta x^t \Delta y^s} \quad\textrm{for }t,s=0,\dots r,
\end{equation}
where $f_h[\cdot]_*$ denotes the undivided difference of $f_h$.
Next, defining the polynomial
\begin{align}
Q_{k,w}(\theta)&:=P_{k,w}(x_j+\theta_1\Delta x,y_i+\theta_2 \Delta y)\nonumber\\
&=\sum_{s,t=0}^r\omega_{t-1}^x(x_j+\theta_1\Delta x)\omega^y_{s-1}(y_i+\theta_2\Delta y)f[x_{j+\nu},\dots,x_{j+\nu+t};y_{i+\mu},\dots,y_{i+\mu+s}]\nonumber\\
&=\sum_{s,t=0}^r q^\nu_{t-1}(\theta_1)q^\mu_{s-1}(\theta_2)  \frac{ f_h[\nu,\dots,\nu+t;\mu,\dots,\mu+s]_*}{t!s!},
\end{align}
where $q_s^\rho(\theta_i)=(\theta_i-\rho)\cdots (\theta_i-\rho-s)$, $q^\rho_{-1}=1$, for $\rho=\nu,\mu$, $i=1,2$, and using the change of variable  $(\theta_1,\theta_2) := ((x-x_j)/\Delta x,(y-y_i)/\Delta y)$, we can compute
\begin{equation}\label{eq:13}
\partial^{\alpha_1}_x\partial^{\alpha_2}_y P_{k,w}(x,y)= \partial^{\alpha_1}_x\partial^{\alpha_2}_y Q_{k,w}\left(\frac{x-x_j}{\Delta x},\frac{y-y_i}{\Delta y}\right)=\frac{1}{\Delta x^{\alpha_1}\Delta y^{\alpha_2}}\partial_{\theta_1}^{\alpha_1} \partial_{\theta_2}^{\alpha_2} Q_{k,w}(\theta_1,\theta_2).
\end{equation}
Putting \eqref{eq:13} in Eq. (\ref{beta_2D}) with the restriction on $\alpha$ before mentioned, i.e. $|\alpha|\geq 2$, we get
\begin{align}
\label{beta_2D_risc}
\beta_{k,w}&=\sum_{{\alpha_1, \alpha_2=0\atop |\alpha|\geq 2}}^r \int_{x_{j-1}}^{x_j}\int_{y_{i-1}}^{y_i} \Delta x^{2(\alpha_1-1)}\Delta y^{2(\alpha_2-1)} \left( \partial_x^{\alpha_1}\partial_y^{\alpha_2}P_{k,w}(x,y)\right)^2 dx dy \nonumber \\
&=\sum_{{\alpha_1, \alpha_2=0\atop |\alpha|\geq 2}}^r \frac{  \Delta x^{2\alpha_1-1}\Delta y^{2\alpha_2-1}}{\Delta x^{2\alpha_1}\Delta y^{2\alpha_2}}\int_{-1}^{0}\int_{-1}^{0}  \left( \partial_{\theta_1}^{\alpha_1}\partial_{\theta_2}^{\alpha_2}Q_{k,w}(\theta_1,\theta_2)\right)^2 d\theta_1 d\theta_2\nonumber \\
&= \frac{ 1}{\Delta x\Delta y}\sum_{{\alpha_1, \alpha_2=0\atop |\alpha|\geq 2}}^r\int_{-1}^{0}\int_{-1}^{0}  \left( \partial_{\theta_1}^{\alpha_1}\partial_{\theta_2}^{\alpha_2}Q_{k,w}(\theta_1,\theta_2)\right)^2 d\theta_1 d\theta_2
\end{align}
where
\begin{equation}
\label{partial_Q}
\partial_{\theta_1}^{\alpha_1}\partial_{\theta_2}^{\alpha_2}Q_{k,w}(\theta_1,\theta_2)=\sum_{t=\alpha_1}^r\sum_{s=\alpha_2}^r \frac{ f_h[\nu,\dots,\nu+t;\mu,\dots,\mu+s]_*}{t!s!}\partial_{\theta_1}^{\alpha_1}q^\nu_{t-1}(\theta_1)\partial^{\alpha_2}_{\theta_2}q^\mu_{s-1}(\theta_2).
\end{equation}
At this point, we divide our proof in several parts taking into account the different possible situations.\\

\noindent
{\em Proof of point (i)}. 
From (\ref{beta_2D_risc})-(\ref{partial_Q}) it is rather straightforward to conclude the thesis. In fact, since the function $f$ is regular by hypothesis, it is enough to apply in (\ref{divided_undivided}) the properties of the (forward) divided differences, that is
\begin{equation}
f[x_{j+\nu},\dots,x_{j+\nu+t};y_{i+\mu},\dots,y_{i+\mu+s}]= \partial_x^t\partial_y^s f(x_j,y_i)+O(\Delta),
\end{equation}
which in turn gives
\begin{equation}
\label{fh_regular}
\frac{f_h[\nu,\dots,\nu+t;\mu,\dots,\mu+s]_*}{t!s!}=\Delta x^t \Delta y^s  \partial_x^t\partial_y^s f(x_j,y_i)+O(\Delta^{t+s+1}).
\end{equation}
Hence,
the first point of the thesis follows by (\ref{beta_2D_risc}) and the hypotheses on the second derivatives of $f$.\\

\noindent
{\em Proof of  point (ii)}. For the second part of the thesis, which clearly requires more effort, we focus on a point $\xi\in \Gamma \bigcap \overline{\mathcal S}_{k,w}$ and consider a circular neighborhood $B$ containing all the points of the considered stencil, together with the two parts in which it is divided by $\Gamma$, $B^+$ and $B^-$. Moreover, in order to lighten the notations, without loss of generality we can take $\xi=(0,0)$ and use only $\Delta=\Delta x=\Delta y$ to denote the discretization parameters. \\
We consider separately the two cases:
\begin{enumerate}[(A)]
\item $\xi \in {\mathcal S}_{k,w}$, i.e. the curve $\Gamma$  has a point on the the grid;
\item $\xi\in \overline{\mathcal S}_{k,w}\setminus {\mathcal S}_{k,w}$.
\end{enumerate}
\underline{\emph{Case A.}} Let us begin by recalling that for the 1D undivided differences the following equality holds
\begin{equation}
f_h[\mu,\dots,\mu+s]_*=\sum_{j=0}^{s-l}\binom{s-l}{j}(-1)^{s-l-j}f_h[\mu+j,\dots,\mu+j+l]_*,\qquad\textrm{ for }l=0,\dots,s,
\label{fh_il2}
\end{equation}
with $s\geq 0$ (see Lemma A.1, p. 37 of \cite{FPT18a}). 
Consequently, since the two-dimensional undivided differences are obtained by successive one-dimensional computations, we can use (\ref{fh_il2}) to rewrite the various terms in (\ref{partial_Q}) in order to focus only on second order undivided differences. More precisely, if $t,s >0$, using (\ref{fh_il2}) with $l=1$ we have
\begin{align}
\label{fh_2D}
f_h[\nu,\dots,&\nu+t;\mu,\dots,\mu+s]_*=\sum_{j=0}^{t-1}\binom{t-1}{j}(-1)^{t-j-1}f_h[\nu+j,\nu+j+1;\mu,\dots,\mu+s]_*\\
&=\sum_{j=0}^{t-1}\binom{t-1}{j}(-1)^{t-j-1}\sum_{i=0}^{s-1}\binom{s-1}{i}(-1)^{s-i-1}f_h[\nu+j,\nu+j+1;\mu+i,\mu+i+1]_*\nonumber \\
&=\sum_{j=0}^{t-1}\binom{t-1}{j}\sum_{i=0}^{s-1}\binom{s-1}{i}(-1)^{s+t-(i+j)}f_h[\nu+j,\nu+j+1;\mu+i,\mu+i+1]_*,\nonumber
\end{align}
whereas, if $s=0$ or $t=0$ we use directly (\ref{fh_il2}) with $l=2$ in the appropriate direction to have, for example if $t>0$,
\begin{equation}
\label{fh_1D_2}
f_h[\nu,\dots,\nu+t;\mu]_*=\sum_{j=0}^{t-2}\binom{t-2}{j}(-1)^{t-j}f_h[\nu+j,\nu+j+1,\nu+j+2;\mu]_*.
\end{equation}
It is now clear that, using (\ref{fh_2D})-(\ref{fh_1D_2}), we can restrict our study to 4-points square stencils $\{\theta_1,\theta_1+1\}\times\{\theta_2,\theta_2+1\}$ for 
$\theta$ such that $\theta \Delta \in \mathcal S_{k,w}$, and to one-directional 3-points stencils. See Fig. \ref{stencil_cases_A} for some examples in the case $r=2$.
Then, keeping in mind that the tangent line definitely partitions the points of the stencil $\mathcal S_{k,w}$ in two sets (or at least some remain in the tangent direction) and considering the possible configurations inside the ball $B$ around $\xi$, we have at most three possible situations:
\begin{enumerate}[{\em A}1)]
\item all the nodes of the stencil are inside $B^+$ (or $B^-$);
\item some of the nodes are in $B^+$ and some in $B^-$;
\item some nodes are on $\Gamma$.
\end{enumerate}
\begin{figure}[t!]
\centering
\includegraphics[trim=150 500 150 120,clip=true,scale=0.6]{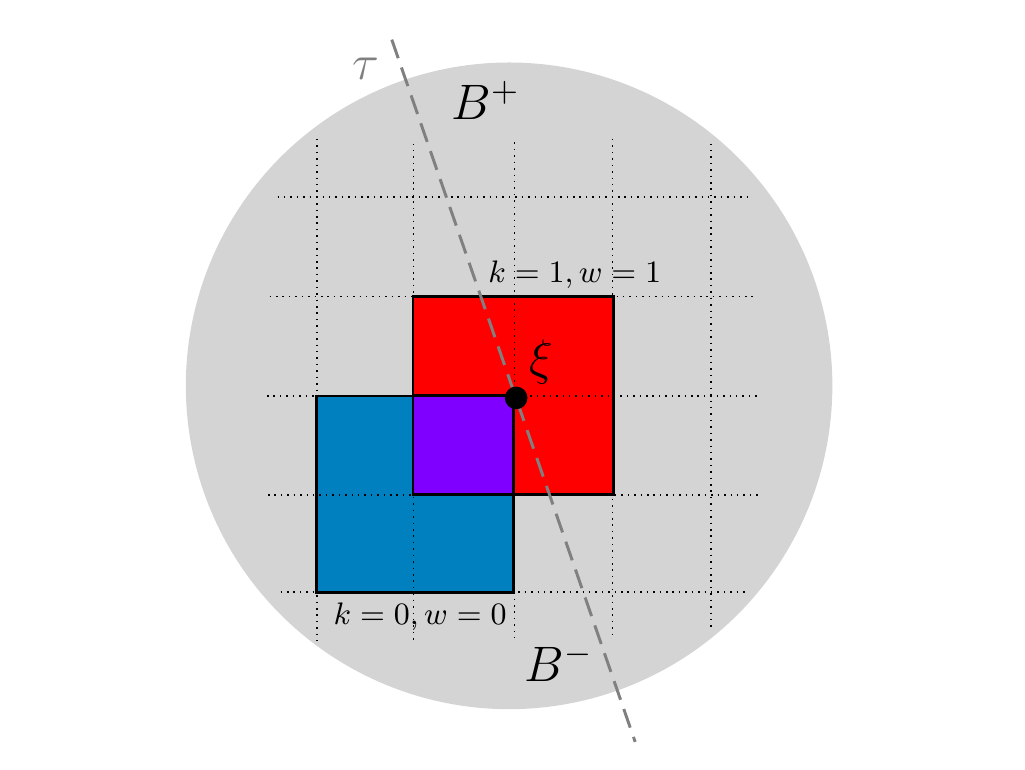}
\includegraphics[trim=150 500 150 120,clip=true,scale=0.6]{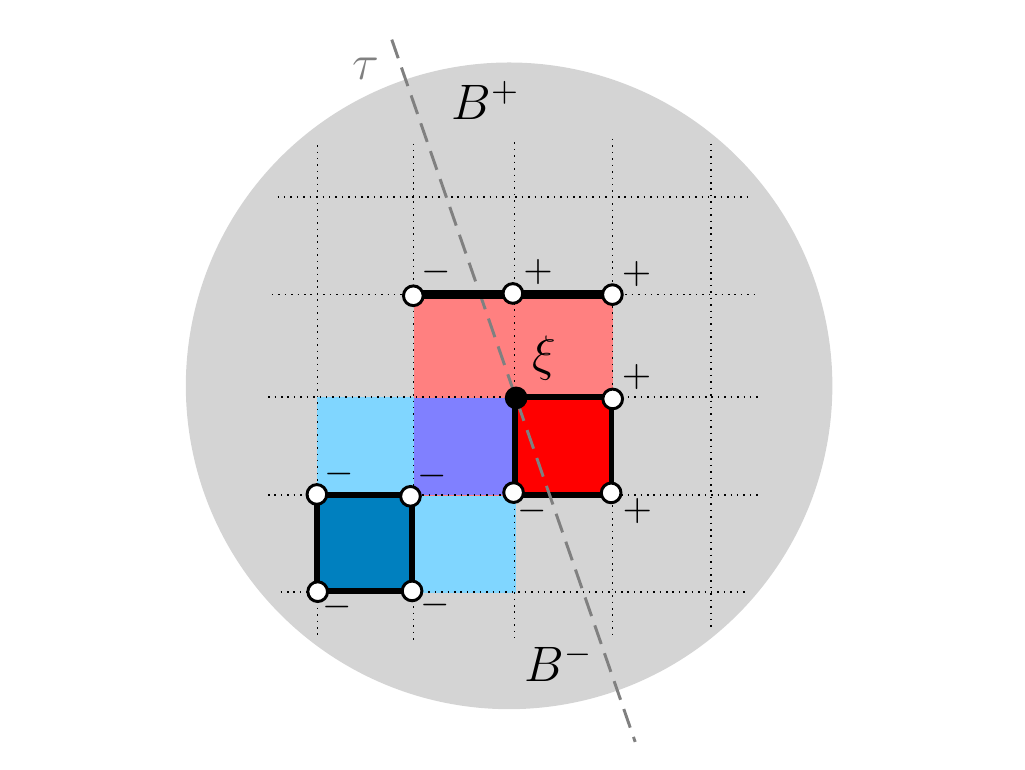}
\caption{\footnotesize{Case A for $r=2$. On the left, two possible stencils around the point $\xi \in \mathcal S_{k,w}$. 
On the right, examples of second order divided differences composing the stencils, for case A1 (darker blue square), case A3 (darker red square) and the 3-points stencil for case A2 (marked with a thicker black line on the top).
\label{stencil_cases_A}}
}
\end{figure}
{\em Proof of case A1:} It is clear that in the first case, since the function is regular by hypothesis, using (\ref{fh_regular}) for $t$, $s$ such that $t+s=2$ we can conclude that the second order undivided differences are of the order $O(\Delta^2)$. 

Next, for the other two cases, the idea of the proof is to project the points of the stencil along the normal direction to $\Gamma$ using Taylor expansions, exploiting the orientation given by the tangent line. More precisely, we project the points in $B^+$ along $\eta$ and those in $B^-$ along $-\eta$, in order to split the different contributions. Then, using the obvious identity
\begin{equation}
\label{develop_theta}
\theta \Delta =\pm\eta \Delta +(\theta\mp \eta)\Delta,\qquad\textrm{ for }\theta: \theta\Delta \in \mathcal S_{k,w},
\end{equation}
and expanding in Taylor series, we have
\begin{equation}
\label{develop_f_theta}
f(\theta \Delta) =f(\pm\eta \Delta) +(\theta\mp \eta)\Delta \cdot \nabla f(\pm \eta \Delta)+O(\Delta^2),\qquad\textrm{ for }\theta\Delta \in B^\pm.
\end{equation}
{\em Proof of  case A2:} Considering the square stencils, the only possible configurations are:
\begin{itemize}
\item 2 points in $B^+$ and 2 in $B^-$;
\item 3 points in $B^\pm$ and 1 in $B^\mp$,
\end{itemize}
whereas, for the 3-points stencils we can only have 2 points in $B^\pm$ and 1 in $B^\mp$. There are clearly some redundant cases due to the symmetry of the construction, so we present the computations only for some representative choices. For the square stencil, if 2 points are in $B^+$ and 2 in $B^-$, e.g. $\{(\theta_1,\theta_2),(\theta_1,\theta_2+1)\}\in B^-$, $\{(\theta_1+1,\theta_2),(\theta_1+1,\theta_2+1)\}\in B^+$, we have 
\begin{align}
\label{fh_square_2_2}
f_h[\theta_1,\theta_1+1;\theta_2,\theta_2+1]_*&=f((\theta_1+1)\Delta,(\theta_2+1)\Delta)-f(\theta_1 \Delta,(\theta_2+1)\Delta )\nonumber \\
&\quad-f((\theta_2+1)\Delta,\theta_2 \Delta)+f(\theta_1 \Delta,\theta_2\Delta)\nonumber \\
&=f(\eta \Delta )+(\theta_1+1-\eta_1)\Delta f_x(\eta \Delta) +(\theta_2+1-\eta_2)\Delta f_y(\eta \Delta) \nonumber \\
&\quad-f(-\eta \Delta)-(\theta_1+\eta_1)\Delta f_x(-\eta \Delta) -(\theta_2+1+\eta_2)\Delta f_y(-\eta \Delta)  \\
&\quad-f(\eta \Delta )-(\theta_1+1-\eta_1)\Delta f_x(\eta \Delta) -(\theta_2-\eta_2)\Delta f_y(\eta \Delta) \nonumber \\
&\quad+f(-\eta\Delta)+(\theta_1+\eta_1)\Delta f_x(-\eta\Delta)+(\theta_2+\eta_2)\Delta f_y(-\eta\Delta) +O(\Delta^2) \nonumber \\
&=\Delta \left(f_y(\eta \Delta )-f_y(-\eta \Delta)\right)+O(\Delta^2).\nonumber
\end{align}
Then, using forward and backward finite difference approximations, we can rewrite the last two terms as
\begin{equation}
\label{forward_backward}
f_y(\eta \Delta)=\frac{f(\eta_1\Delta , \eta_2\Delta)-f(\eta_1\Delta,0)}{\eta_2 \Delta}+O(\Delta^2),\quad f_y(-\eta \Delta)=\frac{f(-\eta_1\Delta , 0)-f(-\eta_1\Delta,-\eta_2\Delta)}{\eta_2 \Delta}+O(\Delta^2),
\end{equation}
and conclude that
\begin{align}
\label{comput_J_eta}
\Delta \left(f_y(\eta \Delta )-f_y(-\eta \Delta)\right)&=\frac{1}{\eta_2}\left(f(\eta_1\Delta , \eta_2\Delta)-f(\eta_1\Delta,0)-f(-\eta_1\Delta , 0)+f(-\eta_1\Delta,-\eta_2\Delta)\right)\nonumber\\
&=\frac{1}{\eta_2}\left(\Delta \partial^+_{\eta}f(0,0)-\eta_1\Delta f_x(0,0)+\Delta \partial^-_{\eta}f(0,0)+\eta_1\Delta f_x(0,0)\right) +O(\Delta^2) \nonumber\\
&=\frac{\Delta}{\eta_2}\left(\partial^+_{\eta} f(0,0)+\partial^-_{\eta}f(0,0)\right)+O(\Delta^2)\nonumber \\
&=\frac{\Delta}{\eta_2}\mathcal D_{\eta}(0,0)+O(\Delta^2) =O(\Delta),
\end{align}
as we wanted. 
Repeating the computation in the case there are 3 points in $B^+$ and 1 in $B^-$, e.g. $\theta \Delta \in B^-$, we have that
\begin{align}
\label{fh_square_3_1}
f_h[\theta_1,\theta_1+1;\theta_2,\theta_2+1]_*&=f(\eta \Delta )+(\theta_1+1-\eta_1)\Delta f_x(\eta \Delta) +(\theta_2+1-\eta_2)\Delta f_y(\eta \Delta) \nonumber \\
&\quad-f(\eta \Delta)-(\theta_1-\eta_1)\Delta f_x(\eta \Delta) -(\theta_2+1-\eta_2)\Delta f_y(\eta \Delta) \nonumber \\
&\quad-f(\eta \Delta )-(\theta_1+1-\eta_1)\Delta f_x(\eta \Delta) -(\theta_2-\eta_2)\Delta f_y(\eta \Delta) \\
&\quad+f(-\eta\Delta)+(\theta_1+\eta_1)\Delta f_x(-\eta\Delta)+(\theta_2+\eta_2)\Delta f_y(-\eta\Delta) +O(\Delta^2) \nonumber \\
&\hspace{-3.6cm}=\underbrace{f(-\eta\Delta)-f(\eta\Delta)+\Delta \eta \cdot \left(\nabla f(-\eta \Delta)+\nabla f(\eta \Delta)\right)}_{\textrm{(I)}} +\underbrace{\Delta \theta \cdot \left(\nabla f(-\eta \Delta )-\nabla f(\eta \Delta)\right)}_{\textrm{(II)}}+O(\Delta^2).\nonumber
\end{align}
For the first part (I), using (\ref{partial_eta_pm}) we have that
\begin{equation}
f(\pm\eta \Delta)=f(0,0) +\Delta \partial^\pm_\eta f(0,0)+O(\Delta^2),
\end{equation}
which, together with the definition of directional derivative, leads to
\begin{align}
f(-\eta\Delta)-f(\eta\Delta)+\Delta \eta \cdot &\left(\nabla f(-\eta \Delta)+\nabla f(\eta \Delta)\right) \nonumber \\
&=\Delta \left(\partial^-_\eta f(0,0)-\partial^+_\eta f(0,0)\right)+\Delta \left(\partial^+_\eta f(\eta \Delta)-\partial^-_\eta f(-\eta\Delta)\right)\nonumber \\
&=O(\Delta^2),
\end{align}
whereas for the second term (II), a computation similar to (\ref{comput_J_eta}) directly gives
\begin{equation}
\Delta \theta \cdot \left(\nabla f(-\eta \Delta )-\nabla f(\eta \Delta)\right)=-\Delta \left(\frac{\theta_1}{\eta_1}+\frac{\theta_2}{\eta_2}\right)\mathcal D_\eta(0,0)+O(\Delta^2)=O(\Delta).
\end{equation}
Using the same technique, we can get the same conclusion also in the case of the 3-points stencil. In fact, if for example 2 points are in $B^-$ and 1 in $B^+$, e.g. $(\theta_1+2,\theta_2)\in B^+$, we have
\begin{align}
f_h[\theta_1,\theta_1+1,\theta_1+2;\theta_2]_*&=f((\theta_1+2)\Delta,\theta_2\Delta)-2f((\theta_1+1) \Delta,\theta_2\Delta )+f(\theta_1 \Delta,\theta_2\Delta)\nonumber \\
&=f(\eta \Delta )+(\theta_1+2-\eta_1)\Delta f_x(\eta \Delta) +(\theta_2-\eta_2)\Delta f_y(\eta \Delta)  \\
&\quad-2\left(f(-\eta \Delta)+(\theta_1+1+\eta_1)\Delta f_x(-\eta \Delta) +(\theta_2+\eta_2)\Delta f_y(-\eta \Delta)\right) \nonumber\\
&\quad+f(-\eta\Delta)+(\theta_1+\eta_1)\Delta f_x(-\eta\Delta)+(\theta_2+\eta_2)\Delta f_y(-\eta\Delta) +O(\Delta^2) \nonumber \\
&\hspace{-4cm}
=\underbrace{f(\eta\Delta)-f(-\eta\Delta)-\Delta \eta \cdot \left(\nabla f(-\eta \Delta)+\nabla f(\eta \Delta)\right)}_{\textrm{(I)}} +\underbrace{\Delta( \theta+2) \cdot \left(\nabla f(\eta \Delta )-\nabla f(-\eta \Delta)\right)}_{\textrm{(II)}}+O(\Delta^2),\nonumber
\end{align}
which analogously to the previous cases, leads to
\begin{equation}
f_h[\theta_1,\theta_1+1,\theta_1+2;\theta_2]_*=\Delta \left(\frac{\theta_1+2}{\eta_1}-\frac{\theta_2+2}{\eta_2}\right)\mathcal D_\eta(0,0)+O(\Delta^2)=O(\Delta).
\end{equation}
Finally, exploiting relations (\ref{fh_2D})-(\ref{fh_1D_2}), the previous computations directly imply that, if the stencil of some undivided difference in (\ref{partial_Q}) intersects the curve $\Gamma$, then the formula (\ref{beta_2D_risc}) gives
\begin{equation}
\beta_{k,w}=\frac{(O(\Delta))^2}{\Delta x \Delta y}=O(1),
\end{equation}
whence the thesis for Case A2.

\noindent
{\em Proof of Case A3: }
To complete the proof for Case A, we have to consider the situation in which some point of the stencil of the second order undivided difference stays along the tangent line (case \emph{A3)} ), which happens in particular when $\xi$ is one of the considered nodes belonging to the stencil. 
For the square stencils we can have:
\begin{itemize}
\item 3 points in $B^\pm$ and 1 on $\Gamma$
\item 2 points in $B^\pm$, 1 in $B^\mp$ and 1 on $\Gamma$
\item 1 point in $B^\pm$, 1 in $B^\mp$ and 2 on $\Gamma$, that is when the curve is diagonal to the grid. 
\end{itemize}
For the 3-points stencil we can only encounter:
\begin{itemize}
\item 2 points in $B^\pm$ and 1 on $\Gamma$
\item 1 points in $B^\pm$, 1 in $B^\mp$ and 1 on $\Gamma$.
\end{itemize}
When the singularity touches the stencil only at some vertex, then the undivided difference cannot see the jump. In fact, computing for example in the case of the square stencil, if $\theta \Delta\in \Gamma$ and the other points are in $B^+$,
\begin{align}
f_h[\theta_1,\theta_1+1;\theta_2,\theta_2+1]_*&=f(\eta \Delta )+(\theta_1+1-\eta_1)\Delta f_x(\eta \Delta) +(\theta_2+1-\eta_2)\Delta f_y(\eta \Delta) \nonumber \\
&\quad-f(\eta \Delta)-(\theta_1-\eta_1)\Delta f_x(\eta \Delta) -(\theta_2+1-\eta_2)\Delta f_y(\eta \Delta) \nonumber \\
&\quad-f(\eta \Delta )-(\theta_1+1-\eta_1)\Delta f_x(\eta \Delta) -(\theta_2-\eta_2)\Delta f_y(\eta \Delta) \nonumber \\
&\quad+f(\eta\Delta) +\Delta (\theta-\eta)\cdot \nabla^+ f(\eta \Delta)+O(\Delta^2) \nonumber \\
&=\Delta (\theta-\eta) \cdot\left( \nabla^+ f(\eta \Delta)-\nabla f(\eta \Delta)\right)+O(\Delta^2)=O(\Delta^2).
\end{align}
Using analogous reasoning it is straightforward to show that for the other two cases of the square stencil we can obtain (\ref{fh_square_3_1}), (\ref{fh_square_2_2}), respectively. In fact, it is enough to develop the points on $\Gamma$ along $\pm\eta\Delta$ using $\nabla^\pm f$, which are always well defined. In the same way, for the 3-points stencil, if $(\theta_1+1,\theta_2)\Delta \in \Gamma$, we get
\begin{align}
f_h[\theta_1,\theta_1+1,\theta_1+2;\theta_2]_*&=f(\eta \Delta )+(\theta_1+2-\eta_1)\Delta f_x(\eta \Delta) +(\theta_2-\eta_2)\Delta f_y(\eta \Delta) \nonumber \\
&\quad-f(\eta \Delta)-\Delta((\theta_1+1,\theta_2)-\eta)\cdot \nabla^+f(\eta \Delta)-f(-\eta \Delta) \nonumber \\
&\quad-\Delta((\theta_1+1,\theta_2)+\eta)\cdot \nabla^-f(-\eta \Delta) \nonumber \\
&\quad+f(-\eta\Delta)+(\theta_1+\eta_1)\Delta f_x(-\eta\Delta)+(\theta_2+\eta_2)\Delta f_y(-\eta\Delta) +O(\Delta^2) \nonumber \\
&=\Delta\left[ ((\theta_1+1,\theta_2)-\eta) \cdot\left( \nabla f(\eta \Delta)-\nabla^+ f(\eta \Delta)\right)\right. \nonumber \\
&\quad\left.+(\theta+\eta) \cdot\left( \nabla f(-\eta \Delta)-\nabla^- f(-\eta \Delta)\right)\right]\nonumber\\
&\quad+\Delta \left(f_x(\eta \Delta )-f^-_x(-\eta \Delta)\right)+O(\Delta^2)\nonumber\\
&=\frac{\Delta}{\eta_1}\mathcal D_{\eta}(0,0)+O(\Delta^2)=O(\Delta).
\end{align}
At this point, as it has been done in the previous case, it is enough to use \eqref{fh_2D}-\eqref{fh_1D_2} in \eqref{beta_2D_risc} to obtain the thesis also for case A3.

\begin{figure}[t!]
\vspace{-0.4 cm}
\centering
\includegraphics[trim=100 370 100 130,clip=true,scale=0.35]{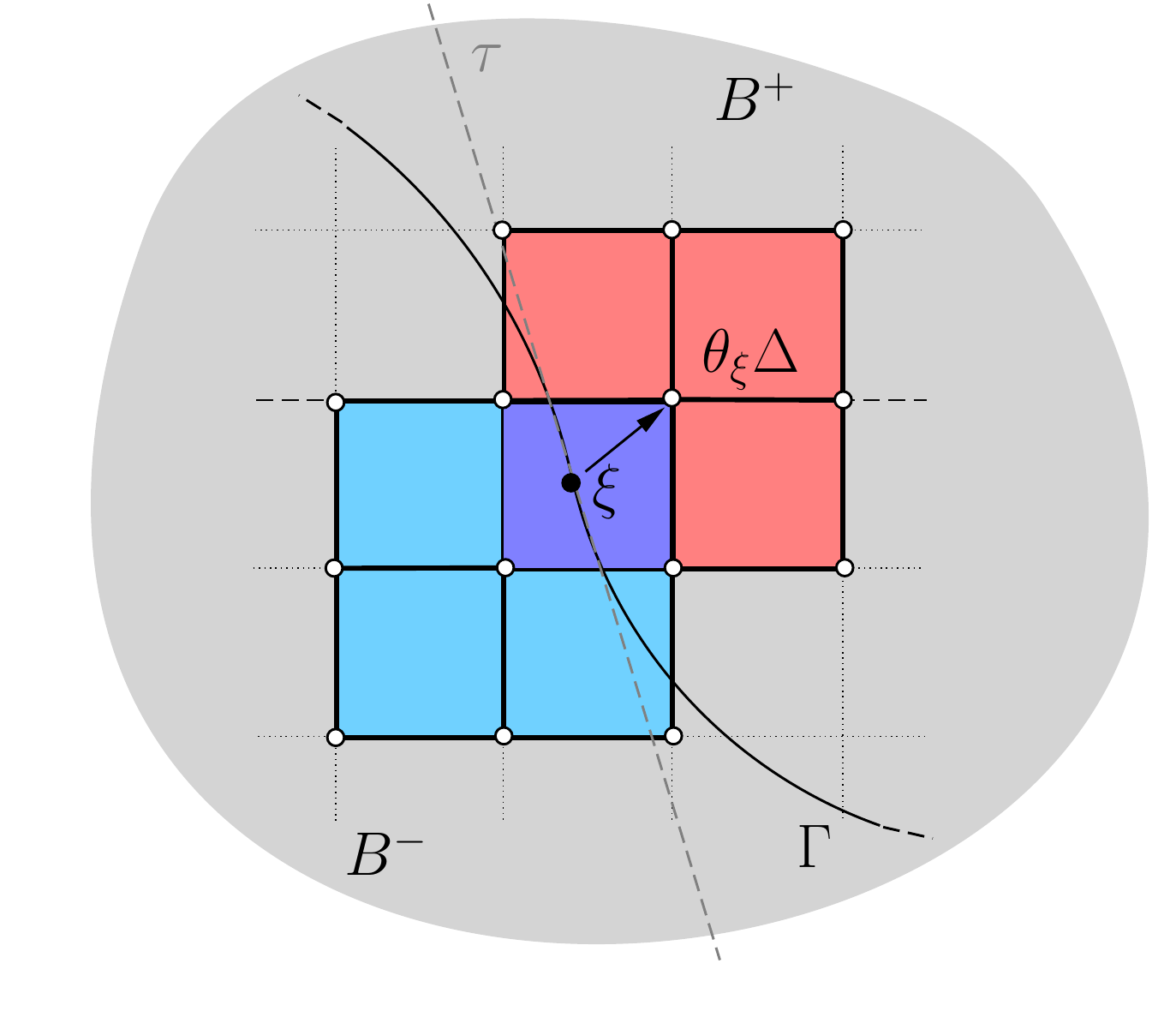}
\quad
\includegraphics[trim=170 530 170 130,clip=true,scale=0.85]{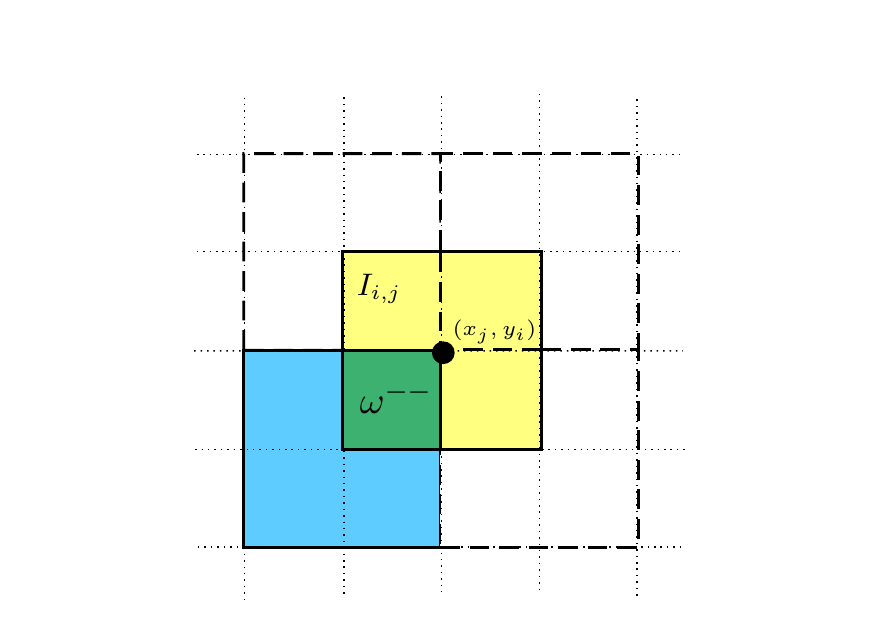}
\caption{\footnotesize{On the left, two possible stencils around the point $\theta_\xi \Delta$ crossed by the singularity curve $\Gamma$ (Case B). On the right, stencils of the polynomials needed to compute $\omega^{--}$. } \label{stencil_cases_B}}
\end{figure}
\noindent
\underline{\emph{Case B.}} If the curve $\Gamma$ does not intersect the points of the stencil $\mathcal S_{k,w}$, but it is such that $ \overline{\mathcal S}_{k,w}\bigcap \Gamma \not = \emptyset$, focusing on a particular point $\xi=(0,0) \in \Gamma$, there always exists a vector $\theta_\xi \in \mathbb R^2$ such that $\theta_\xi\Delta \in \mathcal S_{k,w}$, as shown in Fig. \ref{stencil_cases_B}, on the left. Consequently, we can repeat the same computations of the previous case simply by defining the function
\begin{equation}
f_{h,\xi}(\theta):= f((\theta-\theta_\xi)\Delta),
\end{equation}
using which we have
\begin{equation}
\label{develop_fhxi_theta}
f_{h,\xi}(\theta) =f(\pm\eta \Delta) +(\theta-\theta_\xi\mp \eta)\Delta \cdot \nabla f(\pm \eta \Delta)+O(\Delta^2),\qquad\textrm{ for }\theta\Delta \in B^\pm,
\end{equation}
and the thesis for Case B follows with minor modifications.
\end{proof}
\begin{remark}
\label{def_sigma_2D}
It is good to notice that, if we remove the assumption on the second order derivatives in regions of regularity, we can only obtain the inequality $\beta_{k,w}\leq O(\Delta^2)$ for case i). 
This fact, as pointed out firstly in \cite{ABM10} in the one-dimensional case and then taken up in a multidimensional setting several times, e.g. in the WENOZ schemes \cite{CSV19}, may cause a loss of accuracy at critical points. 
Nevertheless, it can be avoided by simply adding a small quantity $\sigma_h=\sigma \Delta^2 $, for some constant $\sigma>0$, to the smoothness coefficients $\beta_{k,w}$, as we will assume in the sequel, choosing $\sigma=2$.
\end{remark}
\subsection{Construction of a smoothness indicator function in 2D}\label{subsec:phi2D}
We want to construct a \emph{smoothness indicator function} $\phi$ such that
\begin{equation}\label{eq:phi_2D}
\phi_{i,j}=\phi(\omega_{i,j}):=\left\{
\begin{array}{ll}
1\qquad&\textrm{ if the function } f\textrm{ is regular in }I_{i,j},  \\
0&\textrm{ if }I_{i,j} \textrm{ contains a point of singularity,}
\end{array}
\right.
\end{equation}
where $I_{i,j}:=[x_{j-1},x_{j+1}]\times[y_{i-1},y_{i+1}]$ and $\omega_{i,j}$ is the \emph{smoothness indicator} at the node $(x_j,y_i)$ depending on the values of the function $f$. 
From here on, in order to obtain an easy and explicit formula, we will focus on the case $n=m=2$, which is also enough for our needs. Notice that with this assumption we work with polynomials of second degree in each variable, constructed on a stencil of nine points. Then, (\ref{beta_2D}) simply reads 
\begin{equation}
\label{beta_2D_2}
\beta_{k,w}= \sum_{{\alpha_1, \alpha_2=0\atop |\alpha|\geq 2}}^2 \int_{x_{j-1}}^{x_j}\int_{y_{i-1}}^{y_i} \Delta x^{2(\alpha_1-1)}\Delta y^{2(\alpha_2-1)}\left( \partial_x^{\alpha_1}\partial_y^{\alpha_2}P_{k,w}(x,y)\right)^2 dx dy,
\end{equation}
where we have made the choice $\gamma_i=2(\alpha_i-1)$, $i=1, 2$, as suggested by Prop. \ref{prop_beta_2D}. 
From now on, we modify a little bit the notation in order to easily change the integration domain in each subcase. More in detail, referring to Fig. \ref{stencil_cases_B} on the right, we split the domain $I_{i,j}:=[x_{j-1},x_{j+1}]\times [y_{i-1},y_{i+1}]$ into four subcells. For each subdomain we inspect separately the regularity of the function $f$ by comparing only the information given by the ``outer'' (light blue in the figure) and the ``inner'' stencil (yellow in the figure). Consequently, we need only one index to denote the respective stencil, using `$0$' for the inner stencil, and `$1$' for the outer one. 
We refer to these subcells with the superscripts `$\pm\ \pm$, $\pm\ \mp$', according to the sign of the shift between the point $(x_j,y_i)$ and its symmetrical edge with respect to the center of the considered cell, in both $x$ and $y$ directions, respectively. For example, if we focus on the cell $[x_{j-1},x_j]\times[y_{i-1},y_i]$ as in Fig. \ref{stencil_cases_B},
we use the superscript `$-\ -$'.
In this way, we define four indicators $\omega^{\pm\pm}$ and $\omega^{\pm \mp}$ for each point $(x_j,y_i)$ which quantify the regularity of the function in all the subcells around the point. Finally, we take $\omega=\min\{\omega^{\pm \pm}, \omega^{\pm \mp}\}$ as the smoothness indicator for the domain $I_{i,j}$. In order to compute these indicators, we always focus on the polynomial constructed on the central stencil $\mathcal S_0=\{x_{j-1},x_j,x_{j+1}\}\times\{y_{i-1},y_i,y_{i+1}\}$ and compare it with the polynomial constructed on the symmetrical stencil with respect to the considered subcell (where they overlap), denoting the respective smoothness coefficients by $\beta_0$ and $\beta_1$. Note that with this approach even if we are using the ``central'' polynomial for all the four indicators, we have to recompute $\beta_0$ for each case because of the change in the integration domain.
\begin{remark}\label{remark_correct_def}
Notice that the situation is slightly, but fundamentally, different from the one-dimensional case, in which through our procedure we are able to measure the regularity only in the \emph{open} interval $I_j=(x_{j-1},x_{j+1})$. That is indeed natural if we are focusing on the point $x_j$, since the integrals (\ref{beta_k_m})-(\ref{beta_k_p}), with $k=0$, are well-defined if the solution is regular in the open interval $(x_{j-1},x_{j+1})$.
On the contrary, in the two-dimensional case, since the boundary of the domain $I_{i,j}$ is a rectangle, there are at least two possible directions, called $v_1$ and $v_2$, along which we can compute the directional derivative at any point $(x,y)\in I_{i,j}$. Then, if there exists a point $\xi=(x_s,y_s)\in I_{i,j}$ such that
\begin{equation}
\nabla_{v_1} f(\xi)\not = \nabla_{v_2} f(\xi),
\end{equation}
it seems reasonable to consider the interpolating polynomial $P_0(x,y)$ not well defined in the whole domain, also in the case that $\xi$ is one of the points of the boundary.
Therefore, the function $f$ should be considered singular in $I_{i,j}$.
\end{remark}
Now, we complete the construction for the case of $\omega^{--}$, 
the other three follow the exact same lines. Assuming without loss of generality that $(x_j,y_i)=(0,0)$, and using the function $f_h$ defined by \eqref{def_fh}, we can write the polynomials as
\begin{align}
P_0^{--}(x,y)&=f_h(-1,-1)+(x+\Delta x)\frac{ f_h[-1,0;-1]_*}{\Delta x}+(y+\Delta y) \frac{ f_h[-1;-1,0]_*}{\Delta y} \\
&\quad+x(x+\Delta x)\frac{f_h[-1,0,1;-1]_*}{2\Delta x^2}
+y(y+\Delta y)\frac{f_h[-1;-1,0,1]_*}{2\Delta y^2}\nonumber\\
&\quad+(x+\Delta x)(y+\Delta y)\frac{f_h[-1,0;-1,0]_*}{\Delta x\Delta y}
+x(x+\Delta x)(y-\Delta y)\frac{f_h[-1,0,1;-1,0]_*}{2\Delta x^2 \Delta y}\nonumber\\
&\quad+y(x+\Delta x)(y+\Delta y)\frac{f_h[-1,0;-1,0,1]_*}{2\Delta x \Delta y^2}+xy(x+\Delta x)(y+\Delta y)\frac{f_h[-1,0,1;-1,0,1]_*}{4\Delta x^2\Delta y^2},\nonumber
\end{align}
for the reference stencil, and
\begin{align}
\hspace{-0.4cm}P_1^{--}(x,y)&=f_h(0,0)+x\frac{f_h[0,-1;0]_*}{\Delta x}+y\frac{f_h[0;0,-1]_*}{\Delta y}+x(x+\Delta x)\frac{f_h[0,-1,-2;0]_*}{2\Delta x^2} \\
&\,\,+y(y+\Delta y)\frac{f_h[0;0,-1,-2]_*}{2\Delta y^2}
+xy\frac{f_h[0,-1;0,-1]_*}{\Delta x\Delta y}+xy(x+\Delta x)\frac{f_h[0,-1,-2;0,-1]_*}{2\Delta x^2\Delta y}\nonumber\\
&\,\,+xy(y+\Delta y)\frac{f_h[0,-1;0,-1,-2]_*}{2\Delta x\Delta y^2}+xy(x+\Delta x)(y+\Delta y)\frac{f_h[0,-1,-2;0,-1,-2]_*}{4\Delta x^2\Delta y^2}.\nonumber
\end{align}

Whence, we plug these last two expressions in (\ref{beta_2D_2}) and compute directly 
\begin{align}\label{explicit_beta_2D}
\beta_k^{--}
=\frac{1}{\Delta x\Delta y}&\left[ {f{[2,0]}_*}^2+{f{[0,2]_*}}^2+{f{[1,1]_*}}^2+\frac{17}{12}\left({f{[2,1]_*}}^2+{f{[1,2]_*}}^2\right)+\frac{317}{720}{f{[2,2]}_*}^2+\right.\nonumber\\
&\left.+f{[2,0]}_*f{[2,1]_*}
+f{[0,2]}_*f{[1,2]_*}-\frac{1}{6}\left(f{[2,0]}_*f{[2,2]}_*+f{[0,2]}_*f{[2,2]}_*\right)\right. \nonumber\\
&\left.
-\frac{1}{12}\left(f{[2,1]}_*f{[2,2]}_*+f{[1,2]}_*f{[2,2]}_*\right)\right]
\end{align}
where we have used the compact notation $f{[t,s]}_*$ to denote the multivariate undivided difference of $f$ of order $t$ in $x$ and $s$ in $y$. Notice that we avoided to specify the points on which the undivided difference are computed, aiming at writing a unique formulation for all cases. 
Before listing the stencils to be used, it is worth to point out that to obtain this simplification we used the outer stencil in a smart way, writing the Newton form of the polynomial starting from the origin in both directions. More precisely, we have used the \emph{ordered} stencils
$$
\mathcal S_0^{--}=\{x_{j-1},x_j,x_{j+1}\}\times\{y_{i-1},y_i,y_{i+1}\},\quad \mathcal S_1^{--}=\{x_{j},x_{j-1},x_{j-2}\}\times\{y_{i},y_{i-1},y_{i-2}\}.
$$
For all the other smoothness coefficients, $\beta_k^{+ +}$, $\beta_k^{+ -}$, $\beta_k^{- +}$, 
we advise the use of the following \emph{ordered} stencils:
\begin{itemize}
\item $\mathcal S_0^{+-}=\{x_{j+1},x_{j},x_{j-1}\}\times\{y_{i-1},y_i,y_{i+1}\}$, $\mathcal S_1^{+-}=\{x_{j},x_{j+1},x_{j+2}\}\times\{y_{i},y_{i-1},y_{i-2}\}$,
\item $\mathcal S_0^{++}=\{x_{j+1},x_{j},x_{j-1}\}\times\{y_{i+1},y_i,y_{i-1}\}$, $\mathcal S_1^{++}=\{x_{j},x_{j+1},x_{j+2}\}\times\{y_{i},y_{i+1},y_{i+2}\}$,
\item $\mathcal S_0^{-+}=\{x_{j-1},x_{j},x_{j+1}\}\times\{y_{i+1},y_i,y_{i-1}\}$, $\mathcal S_1^{-+}=\{x_{j},x_{j-1},x_{j-2}\}\times\{y_{i},y_{i+1},y_{i+2}\}$.
\end{itemize}
Note that we are changing also the ordering of the reference stencil in each case. Then, if we compute the integrals
\begin{equation}
\label{beta_zeta}
\beta_k^{\zeta_1\zeta_2}=(-1)^{|\zeta|}\sum_{{\alpha_1, \alpha_2=0\atop |\alpha|\geq 2}}^2 \int_{\zeta_1\Delta x}^0\int_{\zeta_2\Delta y}^{0} \Delta x^{2(\alpha_1-1)}\Delta y^{2(\alpha_2-1)} \left(\partial_x^{\alpha_1}\partial_y^{\alpha_2} P_k^{\zeta_1 \zeta_2}(x,y)\right)^2 dx dy,
\end{equation}
where $|\zeta|$ denotes the number of `$-$' in $(\zeta_1,\zeta_2)$, for $\zeta_1, \zeta_2=+,-$, using the previous ordered stencils, we obtain the same formula (\ref{explicit_beta_2D}).
From now on, the construction follows similar steps as in the usual WENO procedure, but with a slightly different aim. 
Firstly, we define
\begin{equation}
\alpha^{--}_k=\frac{1}{(\beta^{--}_k+\sigma_h)^2},
\end{equation}
then we take the information given by the reference polynomial computing
\begin{equation}
\omega^{--}=\frac{\alpha^{--}_0}{\alpha^{--}_0+\alpha^{--}_1},
\end{equation}
which will measure the regularity of the function in the cell $[x_{j-1},x_j]\times[y_{i-1},y_i]$. Once we have computed in the same way the other three indicators, we can finally define
\begin{equation}
\label{omega_2D}
\omega=\min\{\omega^{--},\omega^{+-},\omega^{-+},\omega^{++}\},
\end{equation}
which has properties similar to its one-dimensional counterpart, that is
\begin{equation}
\omega_{i,j}=
\left\{
\begin{array}{ll}
O(\Delta ^4)\qquad &\textrm{ if }(x_s,y_s) \in I_{i,j} \\
\frac{1}{2}+O(\Delta)& \textrm{ otherwise.}
\end{array}
\right.
\end{equation}
Consequently, in order to reduce the oscillations of order $O(\Delta)$ around the optimal value in regions of regularity, we can use one of the construction described in \ref{sec:ind_1D}, that is the mapping (\ref{map2}),
which directly gives $\omega_{i,j}^*=g(\omega_{i,j})=\frac{1}{2}+O(\Delta^3)$, and the `WENO-Z' procedure, which can be directly generalized to the 2D-case. In fact it is sufficient to substitute the superscript `$\pm$' with `$\zeta_1\zeta_2$' in (\ref{tau_z})-(\ref{WENO_Z}).
Finally, what is left is to define a function $\phi$ such that (\ref{eq:phi_2D}) is satisfied, that is $\phi=1$ if $\omega$ is close to $\frac{1}{2}$ and $\phi=0$, otherwise. The simplest choice is to take
\begin{equation}
\label{phi_disc}
\phi(\omega)=\chi_{\{\omega\geq M\}},
\end{equation}
with $M<\frac{1}{2}$, a number possibly dependent on $\Delta x$.
Or we can choose a more regular function
\begin{equation}
\phi(\omega)=\frac{e^{-M\omega}-1}{e^{-M}-1},
\end{equation}
where now $M$ must be big enough to have a quick transition from $1$ to $0$. Of course, in this case $\phi$ will be only an approximation of the values $0$ and $1$ aroud the singularity. \\
It is worth to point out that, according to the described procedure, when using polynomials of degree 2 the stencil of the smoothness indicator function $\phi$ is very compact, requiring only $5\times 5$ points, as shown in Fig. \ref{stencil_cases_B} (right).

\section{Multidimensional Adaptive Filtered Scheme}\label{sec:AFS_2D}
The aim of this section is to present the detailed construction of the multidimensional AF scheme, focusing on the 2D case. 
Information on each component of the scheme will be given, making the implementation of the method rather straightforward, having also provided the explicit formulas for the smoothness indicators in the previous section.  
Let us consider a uniform grid in space $(x_j,y_i)=(j\Delta x,i\Delta y)$, $j, i \in \mathbb Z$, and in time $t_n=t_0+n\Delta t$, $n\in[0,N_T]$, with $(N_T-1)\Delta t< T\leq N_T\Delta t$. A typical feature of a filtered scheme $S^F$  is that at the node $(x_j,y_i)$ it is a mixture of a high-order scheme $S^A$ and a monotone scheme $S^M$ according to a  \emph{filter function} $F$ and a switching parameter $\varepsilon(\Delta t,\Delta x)$. Following this idea, we compute the numerical approximation $u^n_{i,j}=u(t_n,x_j,y_i)$ of the viscosity solution of (\ref{eq:HJ_2}) with the simple formula
\begin{equation}
\label{AFS_2D}
  u^{n+1}_{i,j} \equiv S^{AF}(u^n)_{i,j} := S^{M}(u^n)_{i,j}+\phi^n_{i,j}\varepsilon^n\Delta t F\bigg(\frac{S^{A}(u^n)_{i,j}-S^{M}(u^n)_{i,j}}{\varepsilon^n\Delta t}\bigg), \quad  i, j\in \mathbb{Z},
\end{equation}
where $\varepsilon^n$ is the switching parameter at time $t_n$ and $\phi_{i,j}^n$ is the \emph{smoothness indicator function} at the node $(x_j,y_i)$ and time $t_n$ defined in \eqref{eq:phi_2D} for a general function $f$, now depending on the values of the approximate solution $u^n$.
Filtered schemes are precisely designed in order to be high-order accurate where the solution is smooth and monotone near singularities. This feature allows to naturally prove convergence to the viscosity solution with error estimates, relying on the classical result of Crandall and Lions \cite{CL84}, and to obtain the high-order consistency property with very mild assumptions on the various components, as it has been shown in \cite{FPT18a} for the one-dimensional version of the scheme.
\subsection{Assumptions on the schemes}
Let us describe the assumptions on the basic schemes $S^M$ and $S^A$, which are direct generalizations in two space dimensions of those presented in \cite{FPT18a}. 

\noindent
\textbf{Assumptions on $S^M$.}
\begin{enumerate}[(M1)]
\item The scheme can be written in \emph{differenced form}
\begin{equation}\label{eq:FD_2D}
u^{n+1}_{i,j}\equiv  S^{M}(u^{n})_{i,j} := u^{n}_{i,j} -\Delta t ~ h^M\left(x_j,y_i,D_x^-u^n_{i,j},D_x^+u^n_{i,j},D_y^-u^n_{i,j},D_y^+u^n_{i,j}\right)
\end{equation}
for a function $h^M(x,y,p^-,p^+,q^-,q^+)$, with $D_x^{\pm} u^n_{i,j}:=\pm \frac{u^n_{i,j\pm 1}-u^n_{i,j}}{\Delta x}$ and $D_y^{\pm}u^n_{i,j}:=\pm \frac{u^n_{i\pm 1,j}-u^n_{i,j}}{\Delta y}$;
\item $h^M$ is a Lipschitz continuous function;
\item (Consistency) $\forall u,v$, $\quad h^M(\cdot,\cdot,u,u,v,v)=H(\cdot,\cdot,u,v)$;
\item (Monotonicity) for any functions $u, v$, 
$\qquad u \leq v\quad  \Rightarrow\quad S^M(u) \leq S^M(v)$.
\end{enumerate}

Under assumption (M2), the consistency property (M3) is equivalent to say that for all functions $v\in C^2([0,T]\times\mathbb R)$, there exists a constant $C_M\geq 0$ independent on $\Delta=(\Delta t,\Delta x, \Delta y)$ such that
\begin{equation}
\label{consistenza_M_2D}
\mathcal E_{M}(v)(t,x,y):=\left|\frac{v(t+\Delta t,x,y)-S^M(v(t,\cdot,\cdot))(x,y)}{\Delta t}\right|\leq C_M\left(\Delta t ||v_{tt}||_\infty +\Delta x||v_{xx}||_\infty+\Delta y||v_{yy}||_\infty\right),
\end{equation}
where $\mathcal E_M$ is the consistency error. The last relation highlights the well-known first order bound on the accuracy of the monotone schemes for regular solutions.
\begin{remark}
It is worth to notice that, under the Lipschitz assumption (M2), it can be shown that the monotonicity property (M4) is equivalent to require, for a.e. $(p^-,p^+)\in \mathbb R^2$,
\begin{equation}
\frac{\partial h^M}{\partial p^-}\geq 0,\qquad \frac{\partial h^M}{\partial p^+}\leq 0,\qquad\frac{\partial h^M}{\partial q^-}\geq 0,\qquad \frac{\partial h^M}{\partial q^+}\leq 0,
\end{equation}
and the \emph{Courant-Friedrichs-Lewy (CFL) condition}
\begin{equation}
\label{cond_CFL_2D}
\frac{\Delta t}{\Delta x}\left(\frac{\partial h^M}{\partial p^-}-\frac{\partial h^M}{\partial p^+}\right)+\frac{\Delta t}{\Delta y}\left(\frac{\partial h^M}{\partial q^-}-\frac{\partial h^M}{\partial q^+}\right)\leq 1.
\end{equation}
We define the constant ratios $\lambda_x:=\frac{\Delta t}{\Delta x}$, $\lambda_y:=\frac{\Delta t}{\Delta y}$, such that (\ref{cond_CFL_2D}) is satisfied and we call the \emph{CFL number} as the maximum $\lambda=\max\{\lambda_x,\lambda_y\}$.
\end{remark}
\begin{example}
In this example we recall some monotone schemes in differenced form satisfying (M1)-(M4), which will be used in the numerical tests. Further examples can be found in \cite{PaolucciPhD}.
\begin{itemize}
\item For the \emph{eikonal equation},
\begin{equation}
v_t+\sqrt{v_x^2+v_y^2}=0,
\end{equation}
we can use the simple numerical hamiltonian 
\begin{equation}
\label{hm_eik_2D}
h^M(p^-,p^+,q^-,q^+):=\sqrt{{\max\{p^-,-p^+,0\}}^2+{\max\{q^-,-q^+,0\}}^2}.
\end{equation}
\item For general equations depending also on the space variables, instead, we can use the 2D-version of the \emph{Local Lax-Friedrichs hamiltonian}
\begin{align}
\label{local_lax_fried_2D}
h^M(x,y,p^-,p^+,q^-,q^+)&:=H\left(x,y,\frac{p^-+p^+}{2},\frac{q^-+q^+}{2}\right)\\
&\quad-\frac{\alpha_x(p^-,p^+)}{2}(p^+-p^-)-\frac{\alpha_y(q^-,q^+)}{2}(q^+-q^-), \nonumber
\end{align}
with
\begin{equation}
\label{loc_vel_lf}
\alpha_x(p^-,p^+)=\max_{p\in I(p^-,p^+)}\left|H_p(x,y,p,q)\right|,\qquad \alpha_y(q^-,q^+)=\max_{q\in I(q^-,q^+)}\left|H_q(x,y,p,q)\right|,
\end{equation}
where the maximum are computed uniformly in $q$ and $p$, respectively, and $I(a,b)$ represents the interval with endpoints $a$ and $b$. 
 The scheme is monotone under the restriction $\lambda_x \max|H_p| +\lambda_y \max|H_q|\leq 1$.
\end{itemize}
\end{example}

\noindent
\textbf{Assumptions on $S^A$.}
\begin{enumerate}[(\textrm{A}1)]
\item The scheme can be written in \emph{differenced form}
\begin{align}
\label{eq:HA_2D}
u^{n+1}_j=S^{A}(u^{n})_j :=u^{n}_j-\Delta t  h^{A}&\left(\{x\}_j,\{y\}_i,D_x^{k,-}u_{i,j},\dots, D_x^-u^n_{i,j},D_x^+u^n_{i,j},\dots,D_x^{k,+}u^n_{i,j},\right.\nonumber\\
 &\left.\qquad D_y^{k,-}u_{i,j},\dots, D_y^-u^n_{i,j},D_y^+u^n_{i,j},\dots,D_y^{k,+}u^n_{i,j}\right),
\end{align}
for some function $h^A(x_j,y_i,p^-,p^+,q^-,q^+)$ (in short), with $D^{k,\pm}_x u^n_{i,j}:=\pm \frac{u^n_{i,j\pm k}-u^n_{i,j}}{k\Delta x}$ and $D^{k,\pm}_y u^n_{i,j}:=\pm \frac{u^n_{i\pm k,j}-u^n_{i,j}}{k\Delta y}$, where with $\{x\}_j$, $\{y\}_i$, we denoted a stencil of points around the node $(x_j,y_i)$;
\item $h^A$ is a Lipschitz continuous function.
\item (High-order consistency) Fix $k\geq 2$ order of the scheme (for all the variables), then for all $l=1,\dots,k$ and for all functions $v\in C^{l+1}$, there exists a constant $C_{A,l}\geq 0$ such that
\begin{align}
\mathcal E_{A}(v)(t,x,y)&:=\left|\frac{v(t+\Delta t,x,y)-S^A(v(t,\cdot))(x,y)}{\Delta t}\right| \nonumber\\
&\leq C_{A,l}\left(\Delta t^l ||\partial^{l+1}_t v||_\infty +\Delta x^l||\partial^{l+1}_x v||_\infty +\Delta y^l||\partial^{l+1}_y v||_\infty\right).
\end{align}
\end{enumerate}
In order to restate the consistency property in a more useful form, let us compute the Taylor expansion
\begin{equation}
\label{exp_v_lw_2D}
v(t+\Delta t,x,y)=v(t,x,y)+\Delta t v_t(t,x,y)+\frac{\Delta t^2}{2} v_{tt}(t,x,y)+O\left(\Delta t^{3}\right),
\end{equation}
which gives, dropping the dependence on $(x,y)$ for brevity,
\begin{align}
\label{lax_wen_2D}
v_{tt}&=\frac{\partial}{\partial t}(-H(x,y,v_x,v_y))=-H_p(x,y,v_x,v_y)v_{xt}-H_q(x,y,v_x,v_y)v_{yt}\nonumber\\
&=H_p(x,y,v_x,v_y)\frac{\partial}{\partial x}\left(H(x,y,v_x,v_y)\right)+H_q(x,y,v_x,v_y)\frac{\partial}{\partial y}\left(H(x,y,v_x,v_y)\right)\nonumber\\
&=H_p\left(H_p v_{xx}+H_x \right)+H_q\left(H_q v_{yy}+H_y\right)+2H_p H_q v_{xy},
\end{align}
where in the last equality we dropped the functional dependence for brevity. 
Then, assuming (A1)-(A2), it is straightforward to write the consistency property in terms of the numerical hamiltonian $h^A$, that is
\begin{enumerate}[(\textrm{A}3$^\prime$)]
\item (High-order consistency) Fix $k\geq 2$ order of the scheme (for all the variables), then for all $l=1,\dots,k$ and for all functions $v\in C^{l+1}$, there exists a constant $C_{A,l}\geq 0$ such that
\begin{align}
\mathcal E_{A}(v)&(t,x,y):=\left|h^A(x,y,D_x^-v,D_x^+v,D_y^-v,D_y^+v)-H(x,y,v_x,v_y)\right.\nonumber\\
&\left.+\frac{\Delta t}{2}\left[H_p(x,y,v_x,v_y)\frac{\partial}{\partial x}\left(H(x,y,v_x,v_y)\right)+H_q(x,y,v_x,v_y)\frac{\partial}{\partial y}\left(H(x,y,v_x,v_y)\right)\right]\right|\nonumber\\
&\leq C_{A,l}\left(\Delta t^l ||\partial^{l+1}_t v||_\infty +\Delta x^l||\partial^{l+1}_x v||_\infty +\Delta y^l||\partial^{l+1}_y v||_\infty\right).
\label{cond_nec_ha_2D}
\end{align}
\end{enumerate}
In the following examples we present some simple high-order schemes satisfying (A1)-(A3) with $l=2$, dropping the dependence on $(i,j)$ (and also on $n$) in order to avoid cumbersome notations.
\begin{example}\label{ex:HC_2D}
The easiest way is to consider a \emph{second order in space numerical hamiltonian} $h^A_*$
\begin{equation}\label{cond_ha_space_2D}
h_*^A(x,y,D_x^-v,D_x^+ v,D_y^-v,D_y^+v)=H(x,y,v_x,v_y)+O\left(\Delta x^2\right)+O\left(\Delta y^2\right),
\end{equation}
such as the simple second order \emph{Centered approximation}
\begin{equation}\label{HC_2D}
h_*^A(x,y,D_x^-u,D_x^+u,D_y^-u,D_y^+u)=H\left(x,y,\frac{D_x^-u+D_x^+u}{2},\frac{D_y^-u+D_y^+u}{2}\right),
\end{equation}
and combine it with the second order \emph{Heun method},
\begin{equation}\label{RK2}
\left\{
\begin{array}{l}
u^*=u^n-\Delta t h_*^A(x,y,D_x^{-}u^n,D_x^{+}u^n,D_y^{-}u^n,D_y^{+}u^n)\\
u^{n+1}=\frac{1}{2}u^n+\frac{1}{2}u^*-\frac{\Delta t}{2}h_*^A(x,y,D_x^{-}u^*,D_x^{+}u^*,D_y^{-}u^*,D_y^{+}u^*).
\end{array}
\right.
\end{equation}
\end{example}
\begin{example}
In this example we propose a series of numerical hamiltonians $h^A$ obtained discretizing directly the formula (\ref{lax_wen_2D}). The first is the most direct and simple discretization, named \emph{Lax-Wendroff (LW)  scheme}
\begin{align}\label{LW_2D}
h^A(x,y,D_x^\pm u,D_y^\pm u)&=H(x,y,D_x u,D_y u) \nonumber\\
&-\frac{\Delta t}{2}\left[H_p(x,y,D_x u,D_y u)\left( H_p(x,y,D_x u,D_y u)D^2_{x}u+H_x(x,y,D_x u,D_y u)\right) \right.\nonumber\\
&+H_q(x,y,D_x u,D_y u)\left(H_q(x,y,D_x u,D_y u) D^2_{y} u+H_y(x,y,D_x u,D_y u)\right)\nonumber\\
&\left.+2H_p(x,y,D_x u,D_y u)H_q(x,y,D_x u,D_y u) D^2_{xy}u\right],
\end{align}
where $D^\pm_x u$, $D_x u$, $D^2_{x} u$, $D^\pm_x u$, $D_x u$, $D^2_{x} u$  
are, respectively, the usual one-sided and centered one-dimensional finite difference approximations of the first and second derivative in the $x$ and $y$ direction, whereas for the mixed derivative we use 
\begin{equation}
D^2_{xy} u_{i,j}=\frac{u_{i+1,j+1}-u_{i-1,j+1}-u_{i+1,j-1}+u_{i-1,j-1}}{4\Delta x\Delta y}.
\end{equation}
Notice that the derivatives of $H$ can be computed either analytically or by some second order numerical approximation. In particular, to compute the derivative $H_x$, we can simply use
\begin{equation}
(H_x)_{i,j}=\frac{H(x_{j+1},y_i,D_x u_{i,j},D_y u_{i,j})-H(x_{j-1},y_i,D_x u_{i,j},D_y u_{i,j})}{2\Delta x},
\end{equation}
and analogously for $H_y$. This hamiltonian has been used in \cite{FPT18b}, where the authors propose a new high-order accurate method for image segmentation.
 
Another possibility, which is more closely related to the one-dimensional Lax-Wendroff scheme, is the following Lax-Wendroff (LW2) scheme
\begin{align}\label{LW_2D_2}
h^A(x,y,D_x^\pm u,D_y^\pm u)=&H(x,y,D_x u,D_y u)-\frac{\Delta t}{2}H_p(x,y,D_x u,D_y u)\left(H_x^*+H_x(x,y,D_x u,D_y u)\right)\nonumber\\
&\qquad \qquad-\frac{\Delta t}{2}H_q(D_x u,D_y u)\left(H^*_y+H_y(x,y,D_x u,D_y u)\right),
\end{align}
 where we have defined
\begin{align}\label{Hx_star}
H_x^*=&\frac{1}{\Delta x}\left[H\left(x_j,y_i,\frac{u_{i,j+1}-u_{i,j}}{\Delta x},\frac{u_{i+1,j+1}-u_{i-1,j+1}+u_{i+1,j}-u_{i-1,j}}{4\Delta y}\right)\right.\nonumber \\
&\left.\qquad \qquad\qquad-H\left(x_j,y_i,\frac{u_{i,j}-u_{i,j-1}}{\Delta x},\frac{u_{i-1,j}-u_{i-1,j}+u_{i+1,j-1}-u_{i-1,j-1}}{4\Delta y}\right)\right]
\end{align}
and
\begin{align}\label{Hy_star}
H_y^*=&\frac{1}{\Delta y}\left[H\left(x_j,y_i,\frac{u_{i+1,j+1}-u_{i+1,j-1}+u_{i,j+1}-u_{i,j-1}}{4\Delta x},\frac{u_{i+1,j}-u_{i,j}}{\Delta y}\right)\right.\nonumber\\
&\left.\qquad \qquad\qquad-H\left(x_j,y_i,\frac{u_{i,j+1}-u_{i,j-1}+u_{i-1,j+1}-u_{i-1,j-1}}{4\Delta x},\frac{u_{i-1,j}-u_{i,j}}{\Delta y}\right)\right].
\end{align}
This can be seen as a discretization of the third relation in (\ref{lax_wen_2D}).
\item The last example we propose, which can be used in the case the hamiltonian does not depend on the space variables, is the \emph{Richtmyer} form,
\begin{equation}
h^A(D_x^\pm u,D_y^\pm u)=H\left(D_x u- \frac{\Delta t}{2}H^*_x,D_y u-\frac{\Delta t}{2}H^*_y\right),
\end{equation}
where $H_x^*$ and $H_y^*$ are computed as in (\ref{Hx_star})-(\ref{Hy_star}), without the dependence on $(x_j,y_i)$. In particular, it is worth to notice that this last scheme does not require any computation of $H_p$ or $H_q$.
\end{example}
\begin{example}
Finally, we would like to show a simple way to define a scheme satisfying (A1)-(A3) with $l=4$, reminding that, in our approach, the high-order scheme has no need to be stable, in any sense. Then, generalizing the construction of Example \ref{ex:HC_2D}, we can define a fourth-order scheme by combining the simple fourth-order central approximation
\begin{equation}\label{C4ord}
h_*^A\left(x_j,y_i,D_x^-u,D_x^+u,D_y^-u,D_y^+u\right)=H\left(x_j,y_i,D^*_x u,D^*_y u\right),
\end{equation}
where the approximated partial derivative are computed as
\begin{equation}
D^*_x u=\frac{u_{i,j-2}-8u_{i,j-1}+8u_{i,j+1}-u_{i,j+2}}{12\Delta x},\quad\quad D^*_y u=\frac{u_{i-2,j}-8u_{i-1,j}+8u_{i+1,j}-u_{i+2,j}}{12\Delta y},
\end{equation}
with the classical \emph{fourth-order Runke-Kutta scheme}
\begin{equation}\label{RK4}
\begin{cases}
\ u^{(1)}=u^n-\frac{\Delta t}{2} h^A_*\left(D^\pm u^n\right)\\
\ u^{(2)}=u^n-\frac{\Delta t}{2} h^A_*\left(D^\pm u^{(1)}\right)\\
\ u^{(3)}=u^n-\Delta t h^A_*\left(D^\pm u^{(2)}\right)\\
u^{n+1}=u^n-\frac{\Delta t}{6}\left[ h^A_*\left(D^\pm u^{n}\right)+2h^A_*\left(D^\pm u^{(1)}\right) +2h^A_*\left(D^\pm u^{(2)}\right) +h^A_*\left(D^\pm u^{(3)}\right) \right]\\
\qquad=\frac{1}{3}\left[ 2u^{(2)}+u^{(3)}-\frac{\Delta t}{2} \left(h^A_*\left(D^\pm u^n\right)+h^A_*\left(D^\pm u^{(3)}\right)\right)\right].
\end{cases}
\end{equation}
Note that, differently from the approach usually used when working with Hamilton-Jacobi equations (e.g. WENO schemes of higher order), we do not require the time discretization to be \emph{Total Variation Diminishing (TVD)}, thus we can use the more efficient formula (\ref{RK4}), which is also easier to implement with respect to the TVD version (see \cite{OS91} for more details). It is worth to point out that, due to the Runke-Kutta procedure, the stencil of the scheme is not very compact, requiring $17\times 17$ points in total. 
\end{example}
\subsection{Filter function}
In order to couple the schemes and their properties, we need to define a \emph{filter function F}, such that
\begin{enumerate}[(\textrm{F}1)]
\item $F(\rho) \approx \rho$ for $|\rho|\leq 1$,
\item $F(\rho) =0$ for $|\rho|>1$,
\end{enumerate}
which implies that
\begin{itemize}
\item If $| S^A-S^M|\leq \Delta t \varepsilon^n$ and $\phi_{i,j}^n=1\Rightarrow S^{AF}\approx S^A$
\item If $| S^A-S^M|> \Delta t \varepsilon^n$ or $\phi^n_{i,j}=0 \Rightarrow S^{AF}= S^M$.
\end{itemize}
It is clear that several possible definitions for $F$ satisfy these two requirements. 
In \cite{FPT18a} the authors presented some examples of filter functions satisfying the previous relations with different regularity properties. 
In this work, also for comparison reasons, we use 
the filter function defined in \cite{BFS16} as
\begin{equation}
F(\rho)=\left\{
\begin{array}{ll}
\rho\qquad&\textrm{ if } |\rho|\leq 1 \\
0&\textrm{ otherwise,}
\end{array}
\right.
\end{equation}
which is clearly discontinuous at $\rho=-1,1$ and satisfies trivially the properties (F1)-(F2). 
\subsection{ Tuning of $\varepsilon^n$}
Finally, the last step is to compute the switching parameter $\varepsilon^n$. If we want the scheme (\ref{AFS_2D}) to switch to the high-order scheme when some regularity is detected, we have to choose $\varepsilon^n$ such that
\begin{equation}
\label{ep_ineq_2D}
\left|\frac{S^A(v^n)_{i,j}-S^M(v^n)_{i,j}}{\varepsilon^n \Delta t}\right|\!=\!\left|\frac{h^A\left(D_x^\pm v^n,D_y^\pm v^n\right)_{i,j}\!-\!h^M\left(D_x^\pm v^n,D_y^\pm v^n\right)_{i,j}}{\varepsilon^n}\right| \leq 1, \textrm{ for }(\Delta t,\Delta x,\Delta y)\to 0,
\end{equation}
in the \emph{region of regularity at time} $t_n$, that is $\mathcal R^n=\left\{(x_j,y_i) : \phi^n_{i,j}=1\right\}$, where, for $I_{i,j}=[x_{j-1},x_{j+1}]\times[y_{i-1},y_{i+1}]$, 
\begin{equation}
\label{phi_2D}
\phi^n_{i,j}=\left\{
\begin{array}{ll}
1\qquad&\textrm{ if the solution } u^n\textrm{ is regular in }I_{i,j},  \\
0&\textrm{ if }I_{i,j} \textrm{ contains a point of singularity.}
\end{array}
\right.
\end{equation}

In order to estimate the distance between the numerical hamiltonians, we first proceed by Taylor expansions for the monotone scheme, obtaining
\begin{align}
h^M\left(x,y,D_x^\pm v^n,D_y^\pm v^n\right)=&H\left(x,y,v^n_x,v^n_y\right)+\frac{\Delta x}{2}v^n_{xx}\left(\partial_{p_+}h^M_{i,j}-\partial_{p_-}h^M_{i,j}\right)\\
&+\frac{\Delta y}{2}v^n_{yy}\left(\partial_{q_+}h^M_{i,j}-\partial_{q_-}h^M_{i,j}\right)+O\left(\Delta x^2\right)+O\left(\Delta y^2\right),\nonumber
\end{align}
whereas for the high-order scheme, by using the consistency property,
\begin{align}
h^A\left(x,y,D_x^\pm v^n_{i,j},D_y^\pm v^n_{i,j}\right)\!=&H\left(x,y,v^n_x,v^n_y\right)-\frac{\Delta t}{2}\left[H_p^2\left(x,y,v^n_x,v^n_y\right)v^n_{xx}+H_q^2\left(x,y,v^n_x,v^n_y\right)v^n_{yy}\right.\nonumber \\
&\!\!\!\left.+2H_p\left(x,y,v^n_x,v^n_y\right)H_q\left(x,y,v^n_x,v^n_y\right)v_{xy}\right]\!+\!O\left(\Delta t^2\right)+O\left(\Delta x^2\right)+O\left(\Delta y^2\right).
\end{align}
Whence, from (\ref{ep_ineq_2D}) we obtain
\begin{align}
\label{epn_ineq_2D}
\varepsilon^n\geq& \left|\frac{\Delta x}{2}v^n_{xx}\left[\partial_{p_+}h^M\!-\!\partial_{p_-}h^M\!+\!\lambda_xH^2_p\left(x,y,v^n_x,v^n_y\right)\right]\!+\!
\frac{\Delta y}{2}v^n_{yy}\left[\partial_{q_+}h^M-\partial_{q_-}h^M\!+\!\lambda_yH^2_q\left(x,y,v^n_x,v^n_y\right)\right]\right.\nonumber\\
&\left.+\Delta t v^n_{xy}H_p\left(x,y,v^n_x,v^n_y\right)H_q\left(x,y,v^n_x,v^n_y\right)+O\left(\Delta t^2\right)+O\left(\Delta x^2\right)+O\left(\Delta y^2\right)\right|,
\end{align}
that has to be satisfied in $\mathcal R^n$.
Then, we use a numerical approximation of the lower bound on the right hand side of the previous inequality to obtain the formula for $\varepsilon^n$. In order to devise a simple formula, we introduce the notation
\begin{equation}
\widetilde h^M_{p^+}=h^M\left(x,y,D_x u^n,D^+_x u^n, D_y u^n,D_y u^n\right)-h^M\left(x,y,D_x u^n,D^-_x u^n, D_y u^n,D_y u^n\right),
\end{equation}
with the other cases following analogously. Finally, the simplest discretization, which we use in the numerical examples, is
\begin{align}
\label{eps_2D}
\varepsilon^n=\max_{(x_j,y_i)\in\mathcal R^n}K&\left|\frac{\Delta t}{2}\left[H_p\left(H_x+H_p D^2_x u^n\right)+H_q\left(H_y+H_q D^2_y u^n\right)+2H_p H_q v_{xy})\right]  \right. \nonumber\\
&+\left.\left(\widetilde h^M_{p^+}-\widetilde h^M_{p^-}\right)+\left(\widetilde h^M_{q^+}-\widetilde h^M_{q^-}\right)\right|,
\end{align}
where all the derivatives of $H$ are computed at $(x,y,D_x u^n,D_y u^n)$ and the finite difference approximations around the point $(i,j)$, while $K>\frac{1}{2}$.  
Another possibility, which does not require the computation of the derivatives of $H$ and it is valid in the case the hamiltonian does not depend on the space variables, is the following
\begin{align}
\varepsilon^n=\max_{(x_j,y_i)\in\mathcal R^n}K&\left|H\left(D_x u^n,D_y u^n\right)-H\left(D_x u^n- \frac{\lambda_x}{2}H^*_x,D_y u^n-\frac{\lambda_y}{2}H^*_y\right) \right. \nonumber\\
&+\left.\left(\widetilde h^M_{p^+}-\widetilde h^M_{p^-}\right)+\left(\widetilde h^M_{q^+}-\widetilde h^M_{q^-}\right)\right|,
\end{align}
 where $H_x^*$ and $H_y^*$ are defined by (\ref{Hx_star})-(\ref{Hy_star}).

\section{Numerical tests}\label{sec:tests}

In this section we want to validate, through several interesting numerical tests, the proposed new smoothness indicators defined in Sect. \ref{sec:ind_2D}, showing also their application in the construction of AF schemes introduced in Sect. \ref{sec:AFS_2D}. 
We divided the tests into two parts. In the first part, we focus on the smoothness indicators in one and two space dimensions, considering in both cases singularities that fall on a grid point or inside some cell. 
In the second part, we solve some well-known evolution problems by using AF schemes  composed with different choices of the high-order scheme. Moreover, we compare the results with other state-of-the-art schemes, as the basic filtered scheme \cite{BFS16}, an optimized version of the WENO scheme of second/third order \cite{JP00}, and of the WENO 3/5 with RK3 (TVD) and RK4 (not TVD) as time integrator. 
For each numerical test, we will specify the parameters and the numerical schemes involved in the computations. 
All the numerical tests have been implemented in language C++, with plots 
in MATLAB. The computer used for the simulations is a Notebook Asus F556U Intel Core i7-6500U with speed of 2.59 GHz and 12 GB of RAM. 

\subsection{Tests on the smoothness indicators}\label{sec:tests_ind}

Before illustrating the numerical tests in 2D, we start this section with a 1D test in order to show the good properties of the one-dimensional indicators, stated by Prop. \ref{prop_beta}. For a more extensive presentation of numerical tests, we refer the reader to \cite{PaolucciPhD}. 
For all the tests reported in this subsection related to the smoothness indicators, we will consider the discontinuous function $\phi$ defined in \eqref{phi_disc} with $M=0.2$ and we will use periodic Boundary Conditions (BCs) to implement the indicators. \\
For the two dimensional tests, we compare the results obtained by using our smoothness coefficients 
\eqref{explicit_beta_2D} with two other possible constructions. 
The first comes from  \eqref{beta_2D_2} considering the restriction $|\alpha|=2$ in the summation, arriving to define
\begin{align}\label{explicit_beta_2D_old}
\beta^P_{k,w}&:= \sum_{|\alpha|=2} \int_{x_{j-1}}^{x_j}\int_{y_{i-1}}^{y_i} \Delta x^{2(\alpha_1-1)}\Delta y^{2(\alpha_2-1)} \left( \partial_x^{\alpha_1}\partial_y^{\alpha_2}P_{k,w}(x,y)\right)^2 dx dy \\
&=\frac{1}{\Delta x\Delta y}\left[ {f{[2,0]}_*}^2+{f{[0,2]_*}}^2+{f{[1,1]_*}}^2+\frac{5}{12}\left({f{[2,1]_*}}^2+{f{[1,2]_*}}^2\right)+\frac{17}{720}{f{[2,2]}_*}^2 \right.\nonumber\\
&\left.+f{[2,0]}_*f{[2,1]_*}
+f{[0,2]}_*f{[1,2]_*}-\frac{1}{6}\left(f{[2,0]}_*f{[2,2]}_*+f{[0,2]}_*f{[2,2]}_*\right) \right. \nonumber\\
&\left.-\frac{1}{12}\left(f{[2,1]}_*f{[2,2]}_*+f{[1,2]}_*f{[2,2]}_*\right)\right].\nonumber
\end{align}
Due to the origin, we used the apex 'P', which stands for \emph{Partial}. 
The second construction considered for comparison is the simplest and direct $2D$-extension of the 1D smoothness indicators, which can be obtained by dimensional splitting, that is,
\begin{equation}\label{omega_split}
\omega_{split}=\min\{\omega_x,\omega_y\},
\end{equation}
where $\omega_x$ and $\omega_y$ are the 1D smoothness indicators in $x$ and $y$ direction, respectively, computed as the minimum between $\omega_-$ and $\omega_+$ defined in \eqref{omegaPM}, fixing each time the other variable. More precisely, in the numerical simulations we use the construction with $r=2$ presented in  \ref{sec:ind_1D}, adding the mapping \eqref{map2} to reduce the oscillations in regular regions, as it is done for both 2D indicators. 
Note that for the ``genuine'' 2D indicator which uses the smoothness coefficients  \eqref{explicit_beta_2D} we use polynomials in $\mathbb Q_2(\mathbb R^2)$, whereas for the \emph{splitting indicator} only couples of polynomials in $\mathbb P_2(\mathbb R)$. 
The latter approach, although very simple and fast, has clearly some drawbacks in terms of reliability with respect to the "full" 2D indicator that uses  \eqref{explicit_beta_2D}, especially because of the oscillations around the optimal value in regions of regularity and the problems in localizing singularities which do not fall on grid points, as will be exposed more clearly in the numerical tests. 
In order to distinguish these slight different 2D indicators, we will denote by $\omega^F_{2D}$, and $\omega^P_{2D}$, the indicators that use the smoothness coefficients \eqref{explicit_beta_2D} or \eqref{explicit_beta_2D_old}, respectively. 

\vspace{0.2cm}\noindent
{\bf Test 1.} Let us consider a 1D case where the function, visible in Fig. \ref{fig_f1D}, is defined as  
\begin{equation}\label{ind:ex1}
f(x)=\left\{
\begin{array}{ll}
{\min\left\{(1-x)^2,(1+x)^2\right\}}^2\qquad &\textrm{ if } -1\leq x \leq 1, \\
\sin\left(\frac{\pi}{2} (x-3)\right)& \textrm{ if \ } 2\leq x\leq 4, \\
0 & \textrm{ otherwise},
\end{array}
\right.
\end{equation}
with $\Omega=[-1.5,4.5]$. 
The first part of the function is clearly in $C^2\left([-1,1]\setminus\{0\}\right)$, since it has a discontinuity in the first derivative located at $x=0$, whereas the second is in $C^\infty\left((2,4)\right)$ and it is discontinuous at $x=2$ and $x=4$. We perform the test in two cases: when the singularity falls on a grid point and when it falls inside a cell. In both cases we compare the mapped indicators $g(\omega)$ defined in \eqref{map2} and the new WENO-Z indicators $\omega^Z_{new}$ defined in \eqref{WENO_Z} using $\tau_{new}$ reported in  \eqref{tau_new}, both with $r=2$, for $\phi$ defined by (\ref{phi_disc}) with $M=0.2$. 
\begin{figure}[h!]
\centering
\includegraphics[scale=0.35]{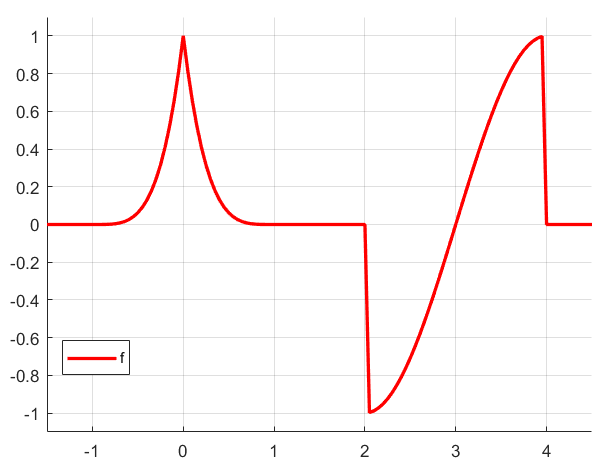}
\caption{\footnotesize{Test 1 in 1D. Piecewise regular function defined in \eqref{ind:ex1} with a singularity in $x=0$ and discontinuities in $x=2$ and $x=4$.}\label{fig_f1D}}
\end{figure}

In Figs. \ref{fig_test1D_1}-\ref{fig_test1D_2} we can see the results obtained by the two indicators in the case the singularity falls on a grid point. It is clear that both indicators perform very well, being able to detect the points of singularity, the discontinuities and the regular regions. We can note that, as better explained in  \ref{sec:ind_1D}, $\omega^Z_{new}$ presents smaller oscillations around the optimal value $\frac{1}{2}$ in regular regions. These oscillations disappear rapidly as the grid is refined. It is good to point out that the stencil of the two indicators is exactly the same, but in the case of $\omega^Z_{new}$ we make a clever use of the full stencil (composed by 5 points) in order to increase the accuracy, reducing also the computational cost with respect to the mappings $g(\omega)$. 
Moreover, we notice that around discontinuities the region detected by the indicator is amplified. This is natural since a jump discontinuity corresponds to an interval where the linear reconstruction has a very high derivative, as can be seen looking at Fig. \ref{fig_f1D}. 

In Figs. \ref{fig_test1D_3}-\ref{fig_test1D_4} we repeat the test shifting the computational domain in order to have the singularity fall inside a cell, precisely on the left with respect to the center of the cell to break the underlying symmetry of the setting. 
In this case we can observe that the mapped indicators have some problems in detecting the right endpoint of the cell for $\Delta x=0.1$, problems which are solved as soon as the grid is refined. On the other hand, the indicators $\omega^Z_{new}$ are able to correctly detect the singular cell and have again smaller oscillations in smooth regions. 

\begin{figure}[h!]
\centering
\begin{subfigure}{0.24\textwidth}
{\includegraphics[width=\textwidth]{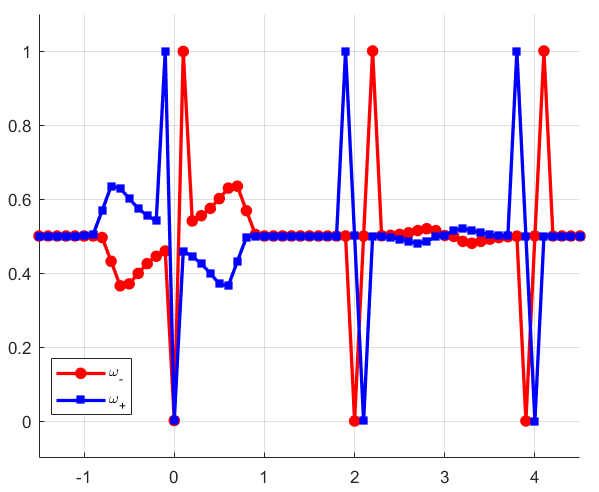}}
\end{subfigure}
\begin{subfigure}{0.247\textwidth}
{\includegraphics[width=\textwidth]{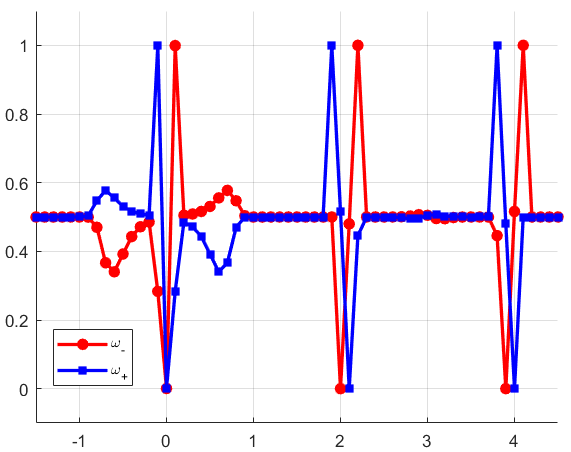}}
\end{subfigure}
\begin{subfigure}{0.24\textwidth}
{\includegraphics[width=\textwidth]{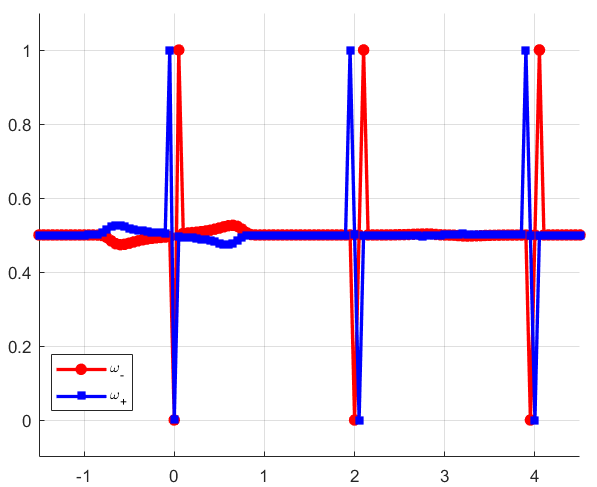}}
\end{subfigure}
\begin{subfigure}{0.245\textwidth}
{\includegraphics[width=\textwidth]{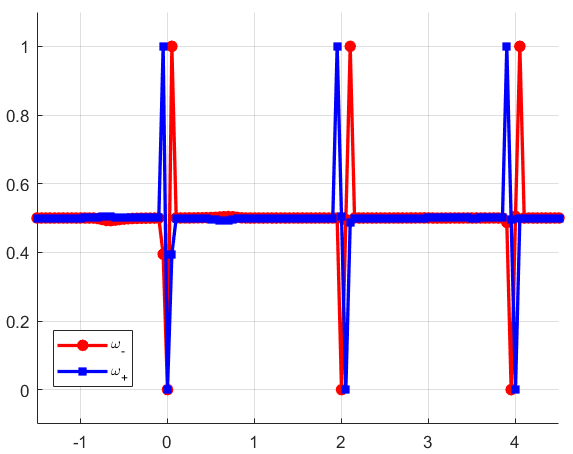}}
\end{subfigure}
\vspace{-0.2cm}
\caption{\footnotesize{Test 1. Singularity on grid points. Results obtained using $g(\omega)$ (left) and $\omega_{new}^Z$ (right) both with $r=2$, for $\Delta x=0.1$ (first two columns) and $\Delta x=0.05$ (last two columns).}\label{fig_test1D_1}}
\end{figure}
\vspace{-0.3cm}
\begin{figure}[h!]
\centering
\begin{subfigure}{0.24\textwidth}
{\includegraphics[width=\textwidth]{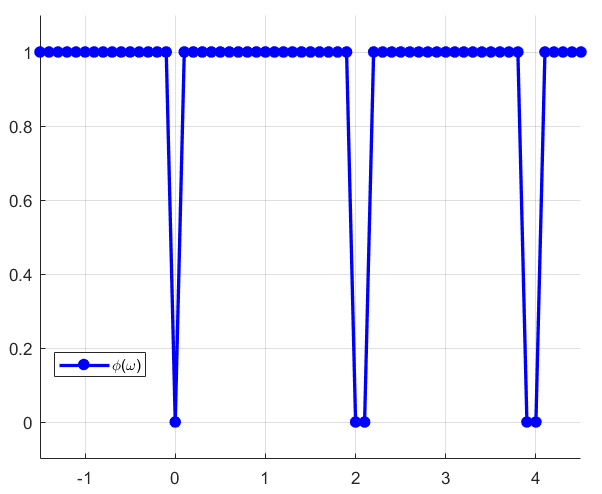}}
\end{subfigure}
\begin{subfigure}{0.247\textwidth}
{\includegraphics[width=\textwidth]{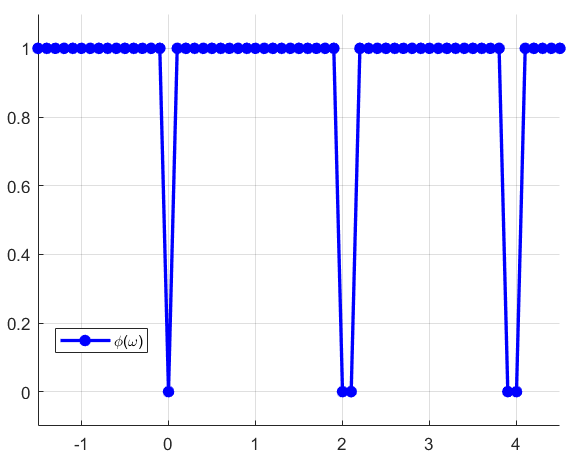}}
\end{subfigure}
\begin{subfigure}{0.24\textwidth}
{\includegraphics[width=\textwidth]{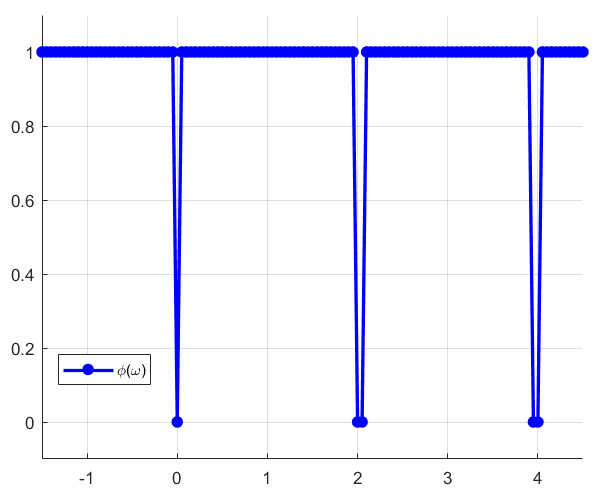}}
\end{subfigure}
\begin{subfigure}{0.245\textwidth}
{\includegraphics[width=\textwidth]{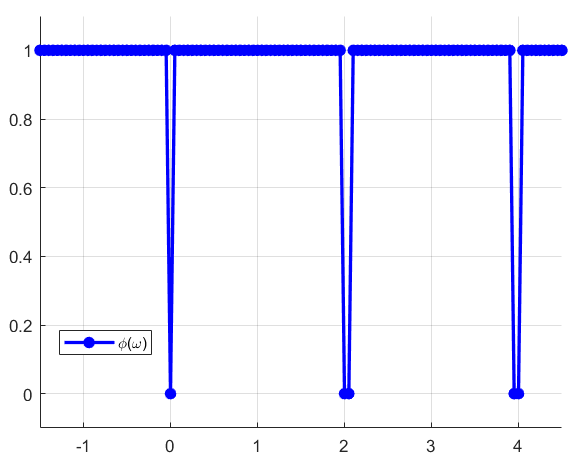}}
\end{subfigure}
\vspace{-0.2cm}
\caption{\footnotesize{Test 1. Singularity on grid points. Results obtained using $\phi(g(\omega))$ (left) and $\phi_{new}^Z$ (right) both with $r=2$, for $\Delta x=0.1$(first two columns) and $\Delta x=0.05$ (last two columns).}\label{fig_test1D_2}}
\end{figure}
\vspace{-0.3cm}
\begin{figure}[h!]
\centering
\begin{subfigure}{0.24\textwidth}
{\includegraphics[width=\textwidth]{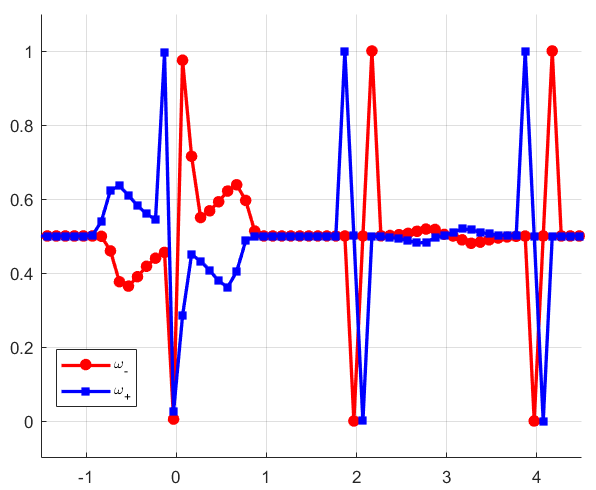}}
\end{subfigure}
\begin{subfigure}{0.247\textwidth}
{\includegraphics[width=\textwidth]{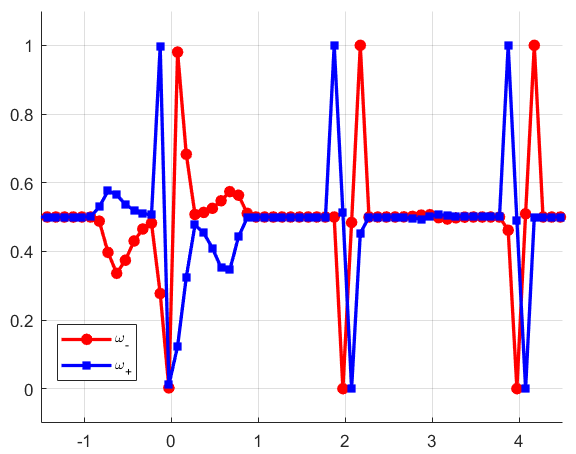}}
\end{subfigure}
\begin{subfigure}{0.24\textwidth}
{\includegraphics[width=\textwidth]{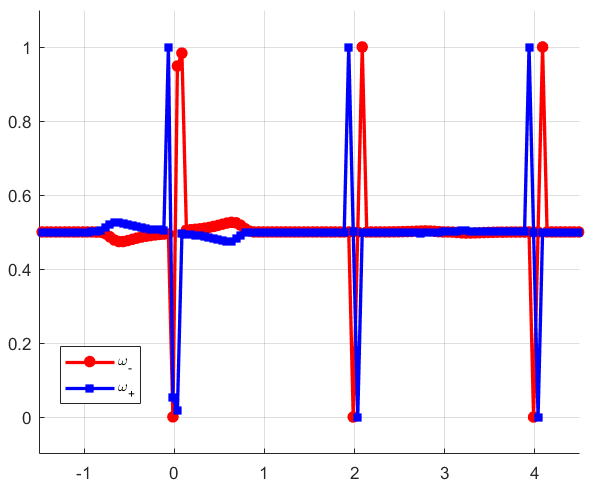}}
\end{subfigure}
\begin{subfigure}{0.245\textwidth}
{\includegraphics[width=\textwidth]{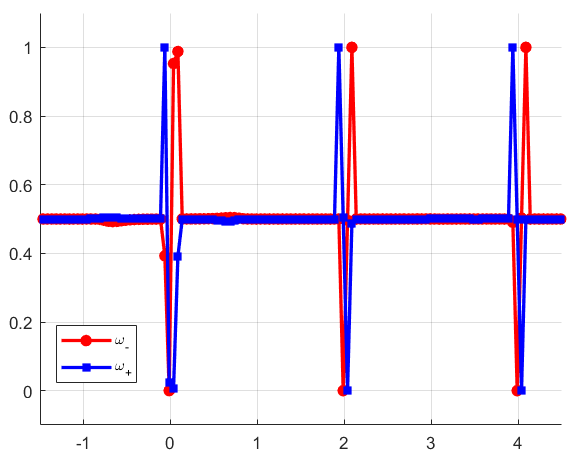}}
\end{subfigure}
\vspace{-0.2cm}
\caption{\footnotesize{Test 1. Singularity inside a cell. Results obtained using $g(\omega)$ (left) and $\omega_{new}^Z$ (right) both with $r=2$, for $\Delta x=0.1$ (first two columns) and $\Delta x=0.05$ (last two columns).}\label{fig_test1D_3}}
\end{figure}
\vspace{-0.3cm}
\begin{figure}[h!]
\centering
\begin{subfigure}{0.24\textwidth}
{\includegraphics[width=\textwidth]{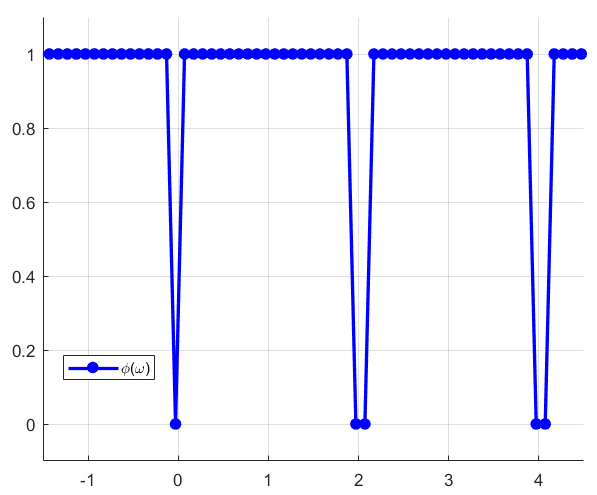}}
\end{subfigure}
\begin{subfigure}{0.245\textwidth}
{\includegraphics[width=\textwidth]{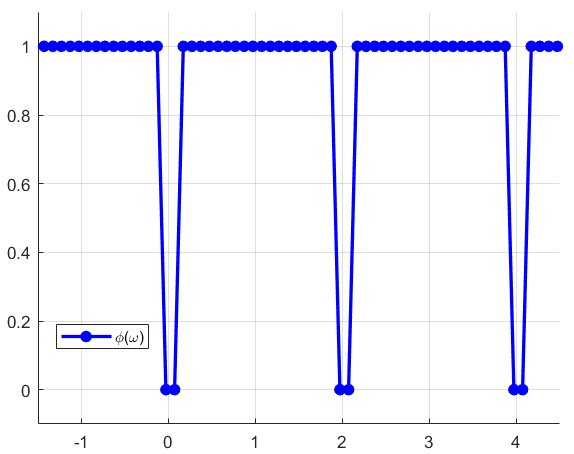}}
\end{subfigure}
\begin{subfigure}{0.238\textwidth}
{\includegraphics[width=\textwidth]{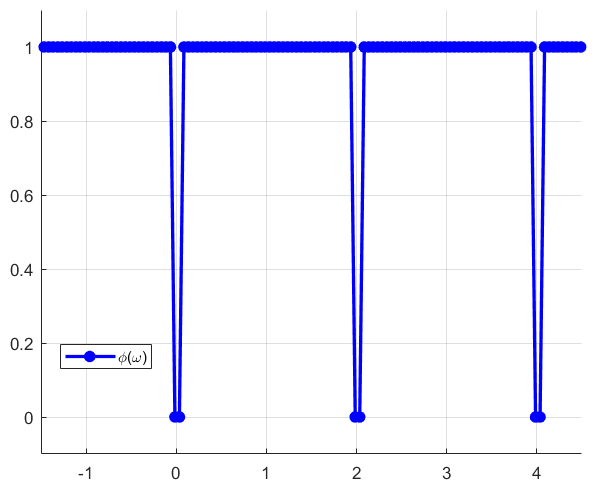}}
\end{subfigure}
\begin{subfigure}{0.247\textwidth}
{\includegraphics[width=\textwidth]{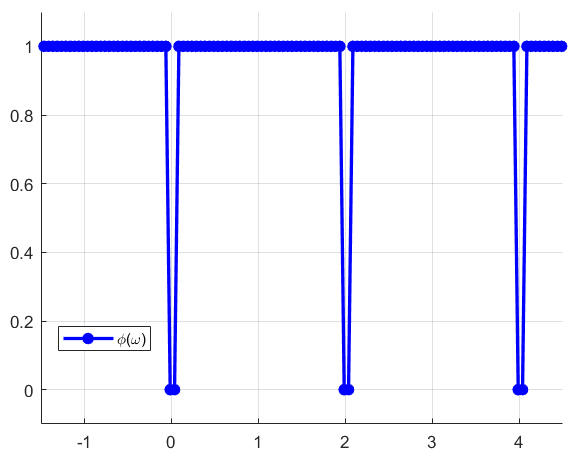}}
\end{subfigure}
\vspace{-0.2cm}
\caption{\footnotesize{Test 1. Singularity inside a cell. Results obtained using $\phi(g(\omega))$ (left) and $\phi_{new}^Z$ (right) both with $r=2$, for $\Delta x=0.1$ (first two columns) and $\Delta x=0.05$ (last two columns).}\label{fig_test1D_4}}
\end{figure}
\begin{figure}[h!]
\centering
\vspace{-0.2cm}
\begin{subfigure}{0.32\textwidth}
	{\includegraphics[width=\textwidth]{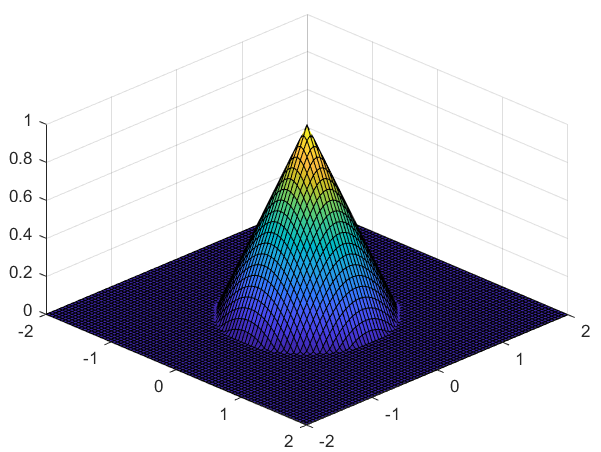}}
	\caption{\footnotesize{}\label{fig:functions_2Da}}	
\end{subfigure}
\begin{subfigure}{0.32\textwidth}
	{\includegraphics[width=\textwidth]{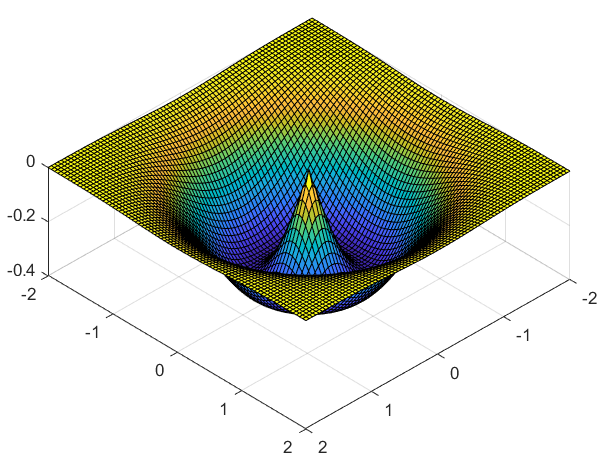}}
	\caption{\footnotesize{}\label{fig:functions_2Db}}
\end{subfigure}
\begin{subfigure}{0.32\textwidth}
	{\includegraphics[width=\textwidth]{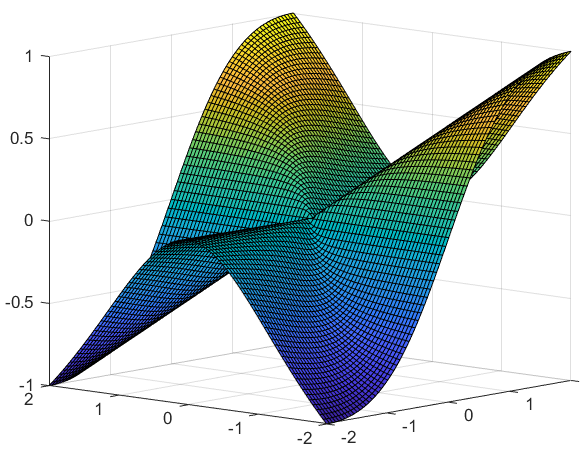}}	
\caption{\footnotesize{}\label{fig:functions_2Dc}}
	\end{subfigure}
\caption{\footnotesize{Tests in 2D.  (a) Singularities at a point and on a circle, (b) A singularity at the origin, (c)  Non differentiable at the origin. }
	\label{fig:functions_2D}}
\end{figure}

\vspace{0.6cm}\noindent
{\bf Test 2.}  Let us start the 2D tests considering a case where the singularities are located at a point and on a circle. 
We thus consider the  cone visible in Fig. \ref{fig:functions_2Da}, defined by the following equation
\begin{equation}
f(x,y)=\left\{
\begin{array}{ll}
1-\sqrt{x^2+y^2 }\qquad& \textrm{ if }x^2+y^2\leq 1\\
0 &\textrm{ otherwise },
\end{array}
\right.
\end{equation}
in the square $[-2,2]^2$, which clearly has a point of singularity located in the origin and a singularity circle at the base of the cone.  
We compare the results obtained using the splitting indicators $\omega_{split}$, and the two indicators defined in (\ref{explicit_beta_2D}) and (\ref{explicit_beta_2D_old}), denoted by $\omega_{2D}^F$ (Full) and $\omega_{2D}^P$ (Partial), respectively, in order to identify the best choice that will be used for the implementation of the AF scheme considered in the numerical simulations reported in Sect. \ref{sec:AFS_tests}. 
We analyze both cases in which the point of singularity in the origin falls on a grid point or inside a cell (again not in the center but shifted toward the south/west node). 
First, we analyze the case of the central singularity falling on a grid point, using the contour plots to show precisely the behavior of the indicators with respect to the location of the singularities. In order to highlight this fact, we plot also the points of singularity, in red in Fig. \ref{fig2_1}, in green in Fig. \ref{fig2_2}.

From Fig. \ref{fig2_1} we can see that all the indicators seem to detect the right regions of singularity and have a very good behavior in regular regions (in light green), although $\omega^F_{2D}$ is evidently more precise. Looking at the neighborhood of the origin we can note that both $\omega^P_{2D}$ and $\omega_{split}$ have fluctuations only in the diagonal direction, with the latter presenting wider oscillations and spreading too much the singular area,  whereas $\omega^F_{2D}$ has a more uniform behavior, also around the singularities on the circle.
\begin{figure}[h!]
	\centering
	\begin{subfigure}{0.3\textwidth}
		{\includegraphics[width=\textwidth]{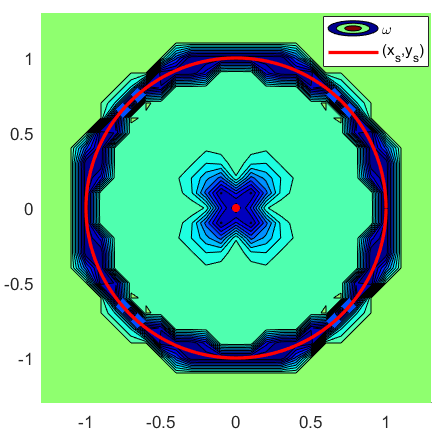}}
	\end{subfigure}
	\begin{subfigure}{0.3\textwidth}
		{\includegraphics[width=\textwidth]{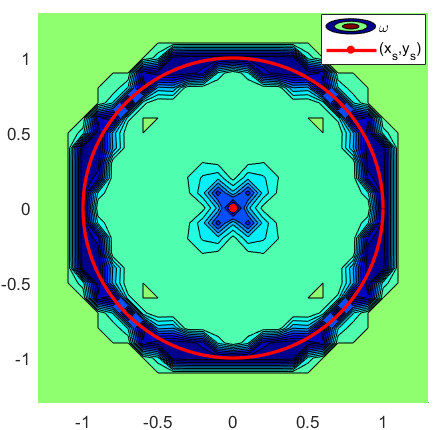}}
	\end{subfigure}
	\begin{subfigure}{0.35\textwidth}
		{\includegraphics[width=\textwidth]{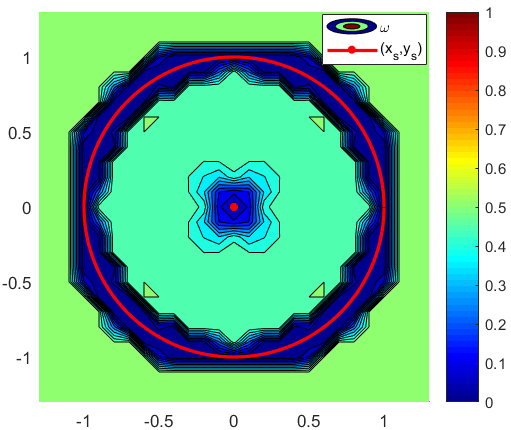}}
	\end{subfigure}\\
	\begin{subfigure}{0.3\textwidth}
		{\includegraphics[width=\textwidth]{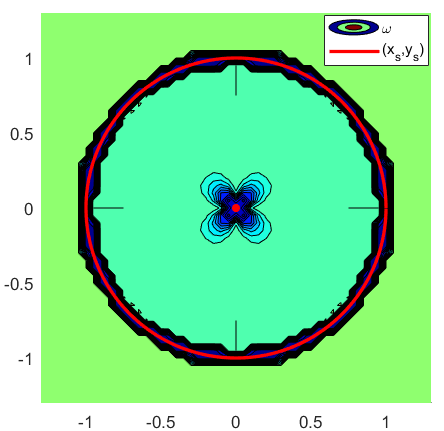}}
	\end{subfigure}
	\begin{subfigure}{0.3\textwidth}
		{\includegraphics[width=\textwidth]{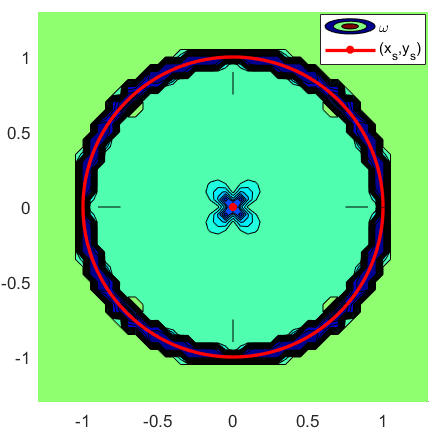}}
	\end{subfigure}
	\begin{subfigure}{0.35\textwidth}
		{\includegraphics[width=\textwidth]{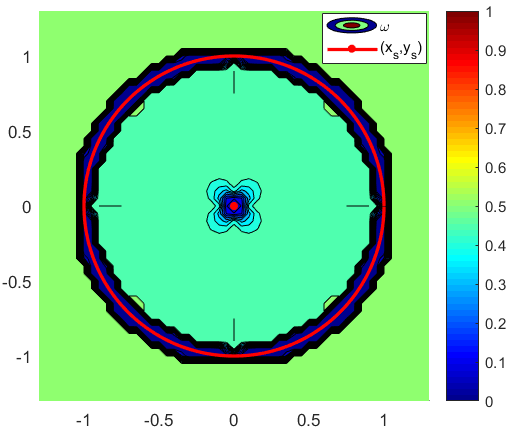}}
	\end{subfigure}
	\caption{\footnotesize{Test 2. Singularity on a grid point. Results obtained using $g(\omega)$ with $\omega_{split}$ (left), $\omega^P_{2D}$ (middle) and $\omega^F_{2D}$ (right), for $\Delta x=\Delta y=0.1$ and $\Delta x=\Delta y=0.05$.}\label{fig2_1}}
\end{figure}
\begin{figure}[h!]
	\centering
	\begin{subfigure}{0.3\textwidth}
		{\includegraphics[width=\textwidth]{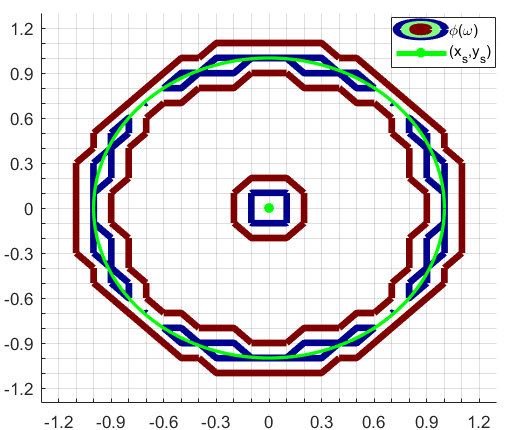}}
	\end{subfigure}
	\begin{subfigure}{0.3\textwidth}
		{\includegraphics[width=\textwidth]{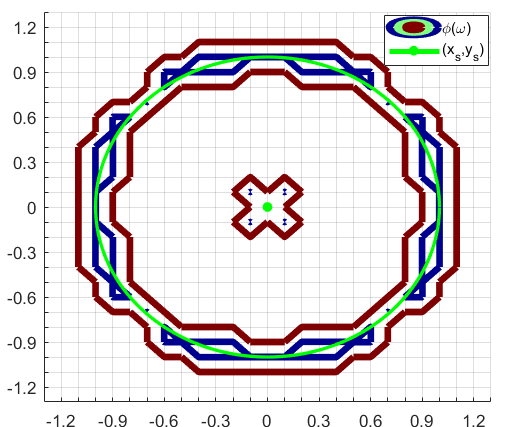}}
	\end{subfigure}
	\begin{subfigure}{0.345\textwidth}
		{\includegraphics[width=\textwidth]{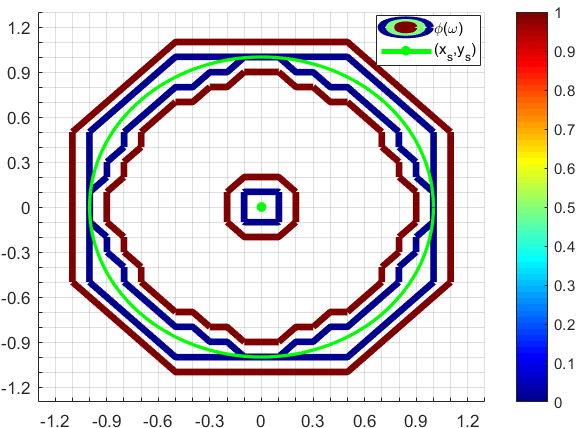}}
	\end{subfigure}\\
	\begin{subfigure}{0.3\textwidth}
		{\includegraphics[width=\textwidth]{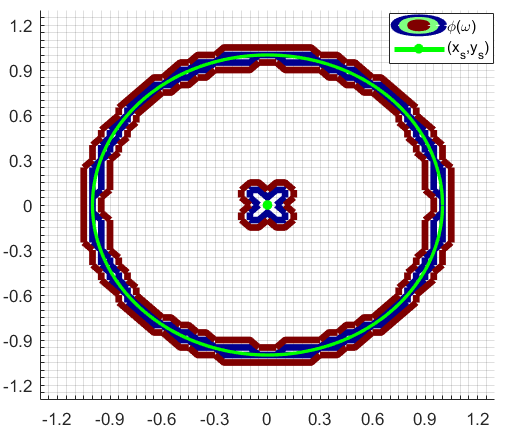}}
	\end{subfigure}
	\begin{subfigure}{0.3\textwidth}
		{\includegraphics[width=\textwidth]{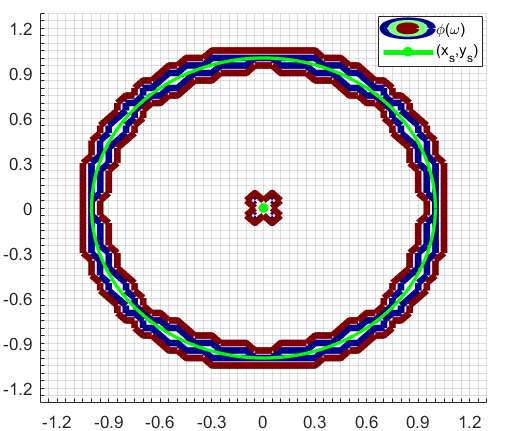}}
	\end{subfigure}
	\begin{subfigure}{0.345\textwidth}
		{\includegraphics[width=\textwidth]{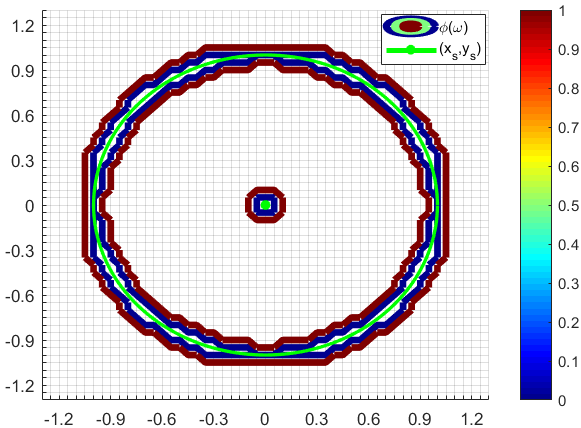}}
	\end{subfigure}
	\caption{\footnotesize{Test 2. Singularity on a grid point. Results obtained using $\phi_{split}$ (left), $\phi^P_{2D}$ (middle) and $\phi^F_{2D}$ (right), for $\Delta x=\Delta y=0.1$ and $\Delta x=\Delta y=0.05$.}\label{fig2_2}}
\end{figure}
\begin{figure}[h!]
	\centering
	\begin{subfigure}{0.3\textwidth}
		{\includegraphics[width=\textwidth]{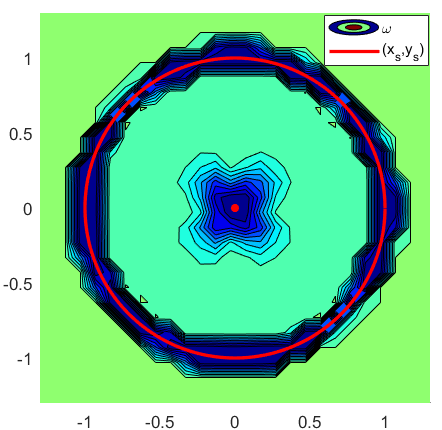}}
	\end{subfigure}
	\begin{subfigure}{0.3\textwidth}
		{\includegraphics[width=\textwidth]{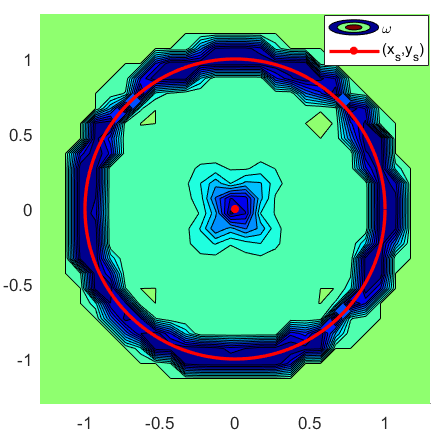}}
	\end{subfigure}
	\begin{subfigure}{0.35\textwidth}
		{\includegraphics[width=\textwidth]{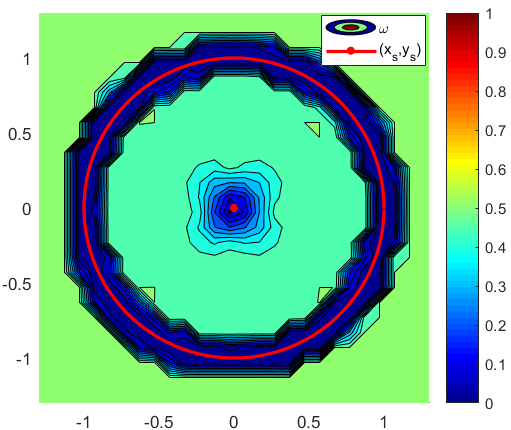}}
	\end{subfigure}\\
	\begin{subfigure}{0.3\textwidth}
		{\includegraphics[width=\textwidth]{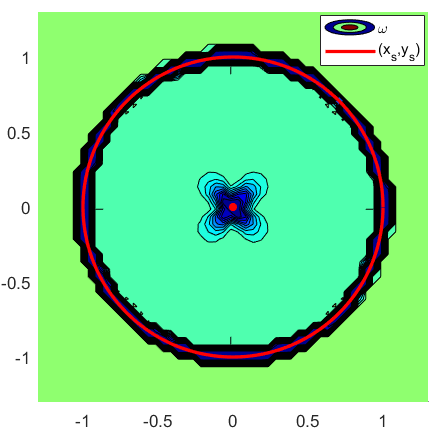}}
	\end{subfigure}
	\begin{subfigure}{0.3\textwidth}
		{\includegraphics[width=\textwidth]{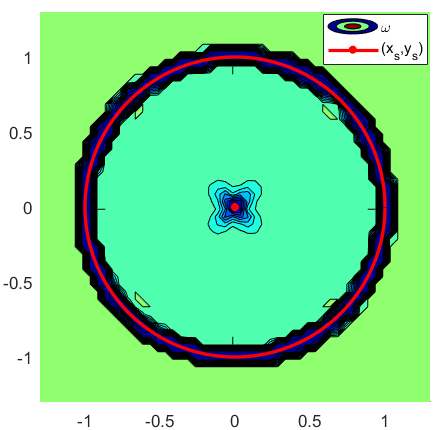}}
	\end{subfigure}
	\begin{subfigure}{0.35\textwidth}
		{\includegraphics[width=\textwidth]{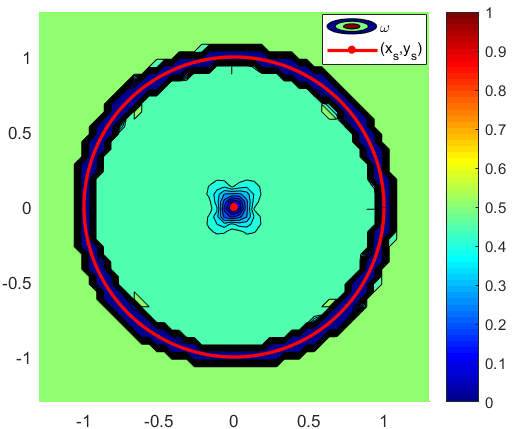}}
	\end{subfigure}
	\caption{\footnotesize{Test 2. Singularity inside a cell. Results obtained using $g(\omega)$ with $\omega_{split}$ (left), $\omega^P_{2D}$ (middle) and $\omega^F_{2D}$ (right), for $\Delta x=\Delta y=0.1$ and $\Delta x=\Delta y=0.05$.}\label{fig2_3}}
	\medskip
	\vspace{0.3 cm}
	\begin{subfigure}{0.3\textwidth}
		{\includegraphics[width=\textwidth]{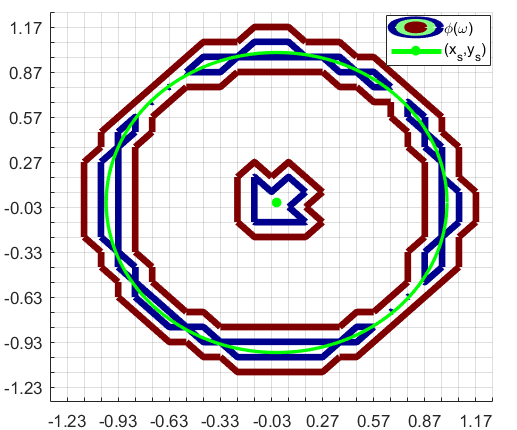}}
	\end{subfigure}
	\begin{subfigure}{0.3\textwidth}
		{\includegraphics[width=\textwidth]{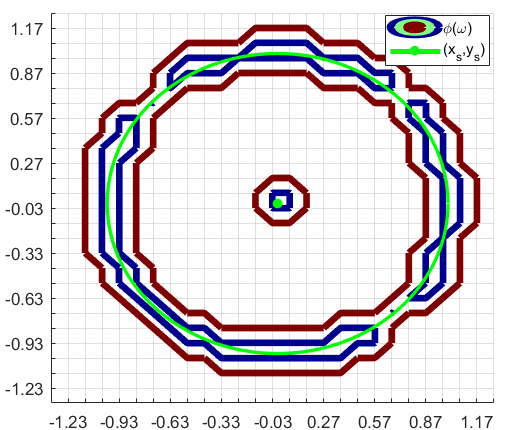}}
	\end{subfigure}
	\begin{subfigure}{0.345\textwidth}
		{\includegraphics[width=\textwidth]{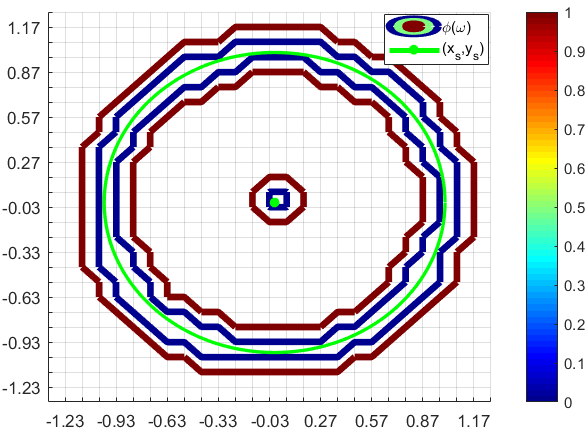}}
	\end{subfigure}\\
	\begin{subfigure}{0.3\textwidth}
		{\includegraphics[width=\textwidth]{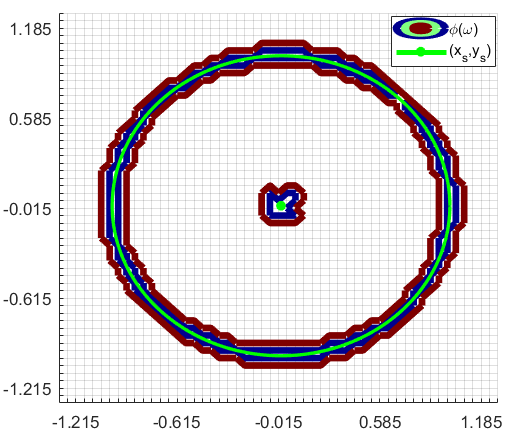}}
	\end{subfigure}
	\begin{subfigure}{0.3\textwidth}
		{\includegraphics[width=\textwidth]{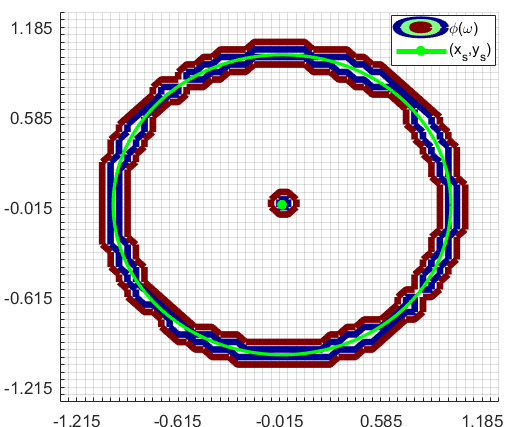}}
	\end{subfigure}
	\begin{subfigure}{0.345\textwidth}
		{\includegraphics[width=\textwidth]{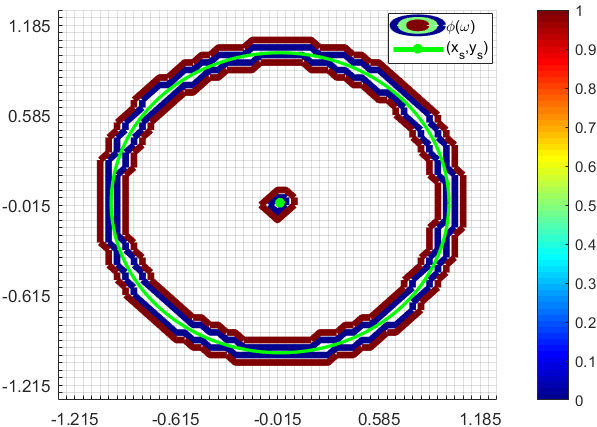}}
	\end{subfigure}
	\caption{\footnotesize{Test 2. Singularity on a grid point. Results obtained using $\phi_{split}$ (left), $\phi^P_{2D}$ (middle) and $\phi^F_{2D}$ (right), for $\Delta x=\Delta y=0.1$ and $\Delta x=\Delta y=0.05$.}\label{fig2_4}}
\end{figure}

Consequently, looking at Fig. \ref{fig2_2}, in which we have highlighted only the $0.1$-level and the $1$-level, we have that $\phi^F_{2D}$ recognizes with extreme precision all the cells and the grid nodes containing a singularity, whereas  $\phi^P_{2D}$ seems to miss some points of the circle and localizes the singularity in the center, consisting in this case of just the origin and the four points in the diagonal direction. Similarly to the latter, $\phi_{split}$ seems to miss some points on the circle, whereas has better precision in detecting the singularity in the origin, at least in the first refinement, although it spreads the singular area in the diagonal direction when the grid is refined. This behavior is rather typical and will be found also in the other simulations. 
The full indicator $\omega^F_{2D}$ is able to detect singular cells even when the singularity just barely intersects the considered region, whereas $\omega^P_{2D}$ and $\omega_{split}$ recognize only ``strong'' singularities, that are close to a grid point or situated around the center of the considered cell. Moreover, looking at the behavior of $\phi^P_{2D}$ around the singularity in the center, enlarging the detected singular region, which instead should consist of only one point, we are led to believe that the results of $\phi^F_{2D}$ are more precise and, evidently, more stable with respect to mesh refinements.

Next, we repeat the test using a grid staggered with respect to the singularity in the origin.
Figs. \ref{fig2_3} and \ref{fig2_4} confirm the impressions given by the previous simulation. In fact, the indicator $\omega^F_{2D}$ and the function $\phi^F_{2D}$ are able to recognize all the cells containing a singularity, in particular those around the circle, which is always inside the 0-level set of $\phi^F_{2D}$ (see Fig. \ref{fig2_4} on the right). Instead, $\omega^P_{2D}$ and $\omega_{split}$ have a rather asymmetrical behavior on the circle. The portion of the circle in the ``South-West'' direction is well detected, whereas on the other three directions the detected regions degenerate into points (see Fig. \ref{fig2_4} on the left and in the middle). Moreover, in Fig. \ref{fig2_4} we can observe that the simple $\phi_{split}$ spreads unnaturally the singular region in the center. Notice that in this case also the full indicator has a particular behavior around the origin in the second refinement, spreading the detected singular region in the direction of the singularity.

\vspace{0.2cm}\noindent
{\bf Test 3.} For this test we consider the nonlinear function visible in Fig. \ref{fig:functions_2Db}, which is regular in the whole domain except in the origin, where it presents a singularity, that is 
\begin{equation}
f(x,y)=-e^{-(x^2+y^2)}\sin{\left(\sqrt{x^2+y^2 }\right)},\qquad (x,y)\in [-2,2]^2.
\end{equation}
Since our aim is mainly to show the good behavior of the indicators also in non trivial regularity regions, we perform the test only considering the point of singularity on a grid node. This is also why, focusing on the singularity in the origin, the results are very similar to those of the previous test, especially with the staggered grid.
\begin{figure}[h!]
\centering
\begin{subfigure}{0.3\textwidth}
{\includegraphics[width=\textwidth]{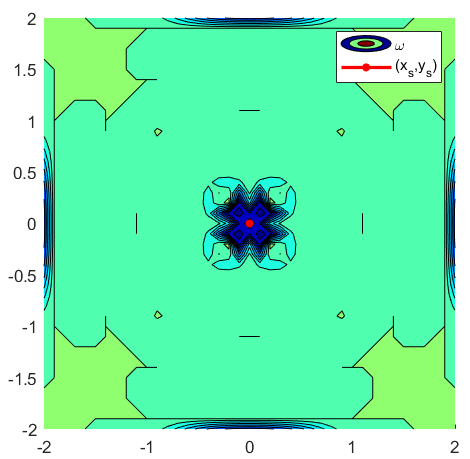}}
\end{subfigure}
\begin{subfigure}{0.3\textwidth}
{\includegraphics[width=\textwidth]{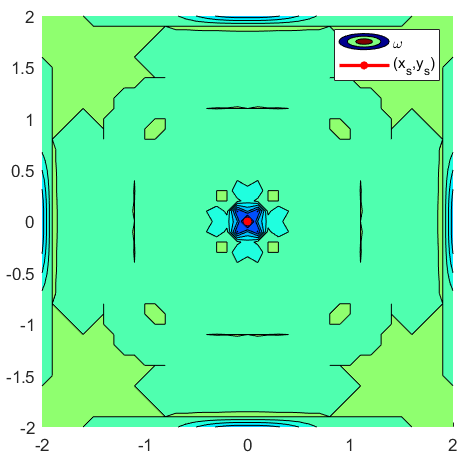}}
\end{subfigure}
\begin{subfigure}{0.35\textwidth}
{\includegraphics[width=\textwidth]{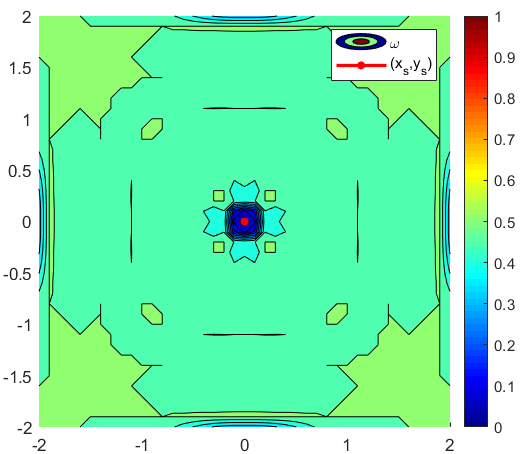}}
\end{subfigure}\\
\begin{subfigure}{0.3\textwidth}
{\includegraphics[width=\textwidth]{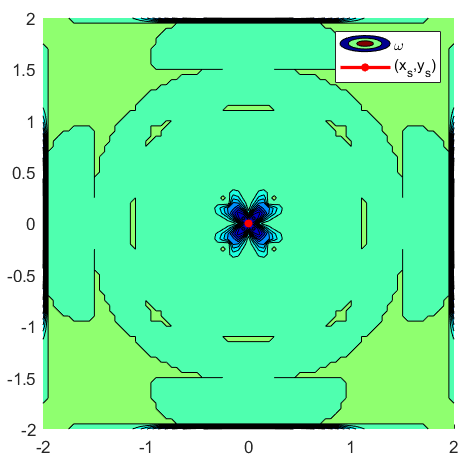}}
\end{subfigure}
\begin{subfigure}{0.3\textwidth}
{\includegraphics[width=\textwidth]{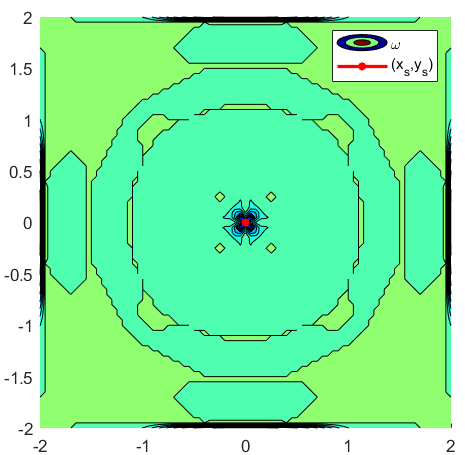}}
\end{subfigure}
\begin{subfigure}{0.35\textwidth}
{\includegraphics[width=\textwidth]{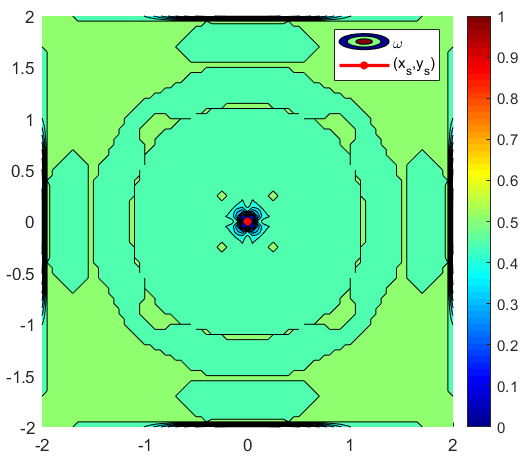}}
\end{subfigure}
\vspace{-0.2cm}
\caption{\footnotesize{Test 3. Singularity on a grid point. Results obtained using $g(\omega)$ with $\omega_{split}$ (left), $\omega^P_{2D}$ (middle) and $\omega^F_{2D}$ (right), for $\Delta x=\Delta y=0.1$ and $\Delta x=\Delta y=0.05$.}\label{fig2_5}}
\end{figure}
\begin{figure}[h!]
\centering
\begin{subfigure}{0.3\textwidth}
{\includegraphics[width=\textwidth]{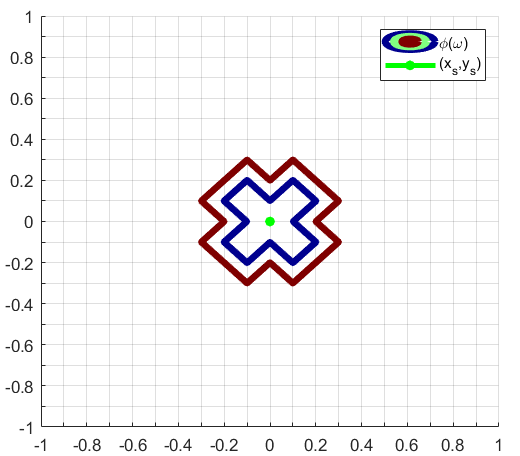}}
\end{subfigure}
\begin{subfigure}{0.3\textwidth}
{\includegraphics[width=\textwidth]{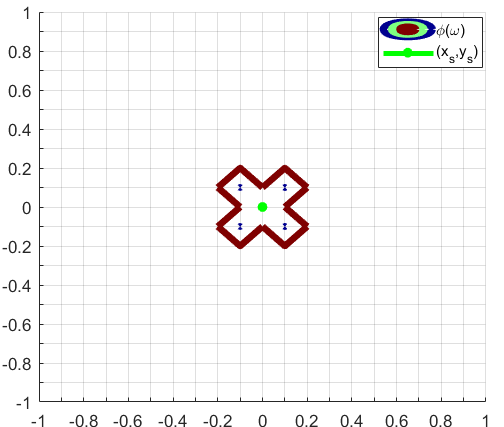}}
\end{subfigure}
\begin{subfigure}{0.345\textwidth}
{\includegraphics[width=\textwidth]{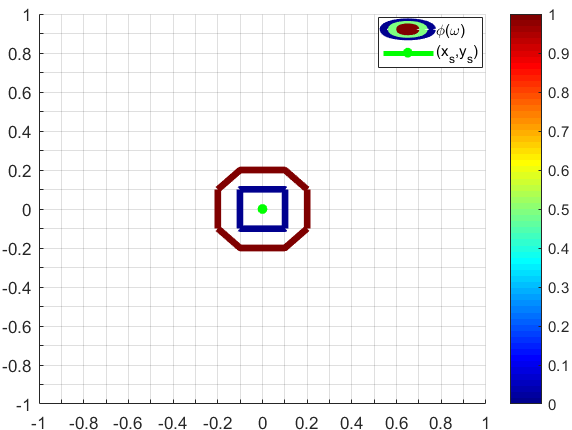}}
\end{subfigure}\\
\begin{subfigure}{0.3\textwidth}
{\includegraphics[width=\textwidth]{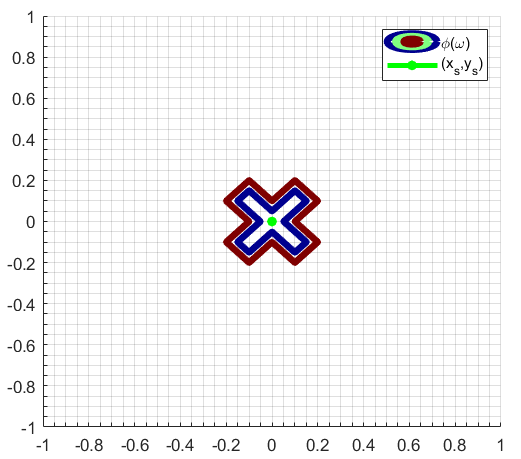}}
\end{subfigure}
\begin{subfigure}{0.3\textwidth}
{\includegraphics[width=\textwidth]{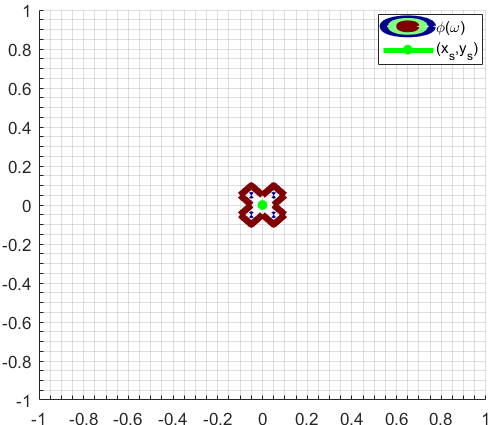}}
\end{subfigure}
\begin{subfigure}{0.344\textwidth}
{\includegraphics[width=\textwidth]{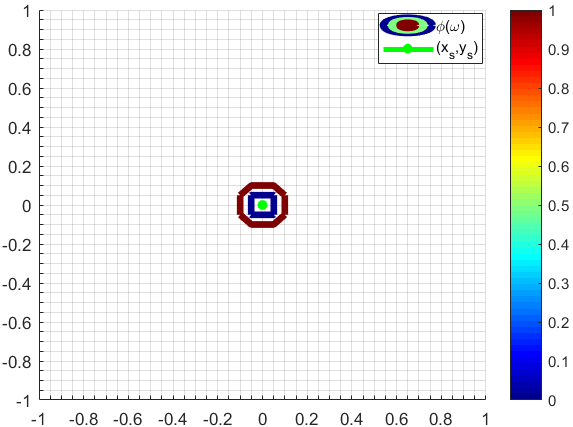}}
\end{subfigure}
\vspace{-0.2cm}
\caption{\footnotesize{Test 3. Singularity on a grid point. Results obtained using $\phi_{split}$ (left), $\phi^P_{2D}$ (middle) and $\phi^F_{2D}$ (right)), for $\Delta x=\Delta y=0.1$ and $\Delta x=\Delta y=0.05$.}\label{fig2_6}}
\vspace*{-0.2cm}
\end{figure}
Figs. \ref{fig2_5} and \ref{fig2_6} (in which we have zoomed the neighborhood of the singularity) follow the same line of the first test, at least regarding the behavior of the multidimensional indicators around the point of singularity in the origin. In fact, we can observe that $\omega^F_{2D}$ has again a more uniform behavior with respect to $\omega_{2D}^P$ in all directions, whereas the results on the regular regions are practically the same. On the other hand, in this situation the splitting indicator $\omega_{split}$ gives evidently worse results, spreading the detected region in the diagonal directions, which moreover does not shrink as the grid is refined 
(see Fig. \ref{fig2_6} on the left). 
Note that, because of the periodic boundary conditions, all the indicators correctly detect more singularities at the borders, as can be seen in Fig. \ref{fig2_5}. 
We avoided to represent the corresponding detected regions in order to highlight the behavior of the indicators around the singularity in the origin, which was our primary interest.

Summarizing the previous observations on Tests 2 and 3, we can deduce that the formula (\ref{explicit_beta_2D_old}) is more suitable to localize singularities with high precision, since the corresponding indicator is able to select the correct grid points or cells characterized by a strong discontinuity in the gradient. Instead the splitting indicator, although faster and simpler to implement, presents various drawbacks which makes it rather unreliable and unfit for our purposes. On the other hand, using the formula (\ref{explicit_beta_2D}), we are able to precisely detect the regularity of the function in the whole domain $I_{i,j}=[x_{j-1},x_{j+1}]\times[y_{i-1},y_{i+1}]$, boundary included. 
Hence, it is clear that the correct indicators for the construction of our AF scheme should be based on (\ref{explicit_beta_2D}), instead of (\ref{explicit_beta_2D_old}), 
since when constructing non splitting 2D-numerical schemes we usually need at least a nine-point stencil $\mathcal S_{i,j}=\{x_{j-1}, x_j, x_{j+1}\}\times \{y_{i-1}, y_i, y_{i+1}\}$.
This is why, when working with second order schemes in 2D, as the ones we have defined as $S^A$ in Sect. \ref{sec:AFS_2D}, in order to verify the high-order consistency property using Taylor expansion, we have to require the regularity of the function in the whole domain $I_{i,j}$. 

\vspace{0.2cm}\noindent
{\bf Test 4.}
We conclude the present analysis on the smoothness indicators by considering a well known example of a function which is differentiable everywhere except at the origin, visible in Fig. \ref{fig:functions_2Dc}, and defined as 
\begin{equation}
f(x,y)=\left\{
\begin{array}{ll}
\frac{y x^2}{x^2+y^2}\qquad& \textrm{ if }(x,y)\not = (0,0),\\
0 &\textrm{ otherwise, }
\end{array}
\right.
\end{equation}
again in the square $[-2,2]^2$. 
\begin{figure}[h!]
\centering
\begin{subfigure}{0.3\textwidth}
{\includegraphics[width=\textwidth]{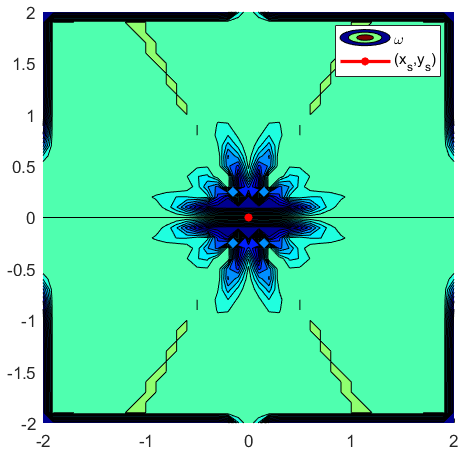}}
\end{subfigure}
\begin{subfigure}{0.3\textwidth}
{\includegraphics[width=\textwidth]{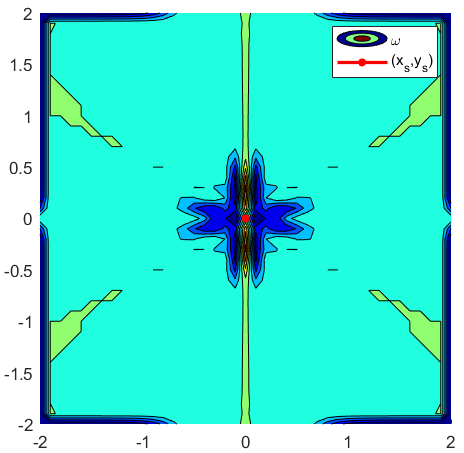}}
\end{subfigure}
\begin{subfigure}{0.35\textwidth}
{\includegraphics[width=\textwidth]{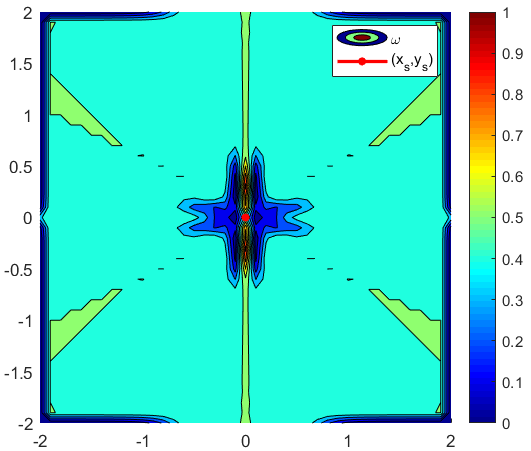}}
\end{subfigure}\\
\begin{subfigure}{0.3\textwidth}
{\includegraphics[width=\textwidth]{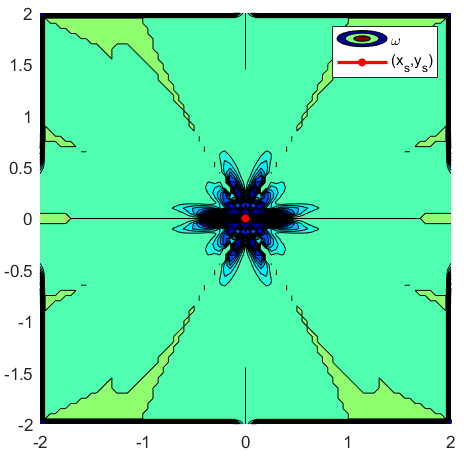}}
\end{subfigure}
\begin{subfigure}{0.3\textwidth}
{\includegraphics[width=\textwidth]{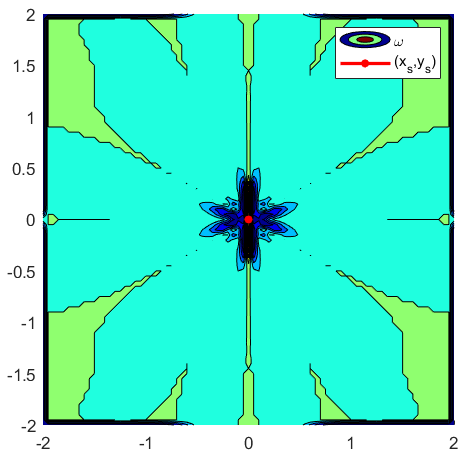}}
\end{subfigure}
\begin{subfigure}{0.35\textwidth}
{\includegraphics[width=\textwidth]{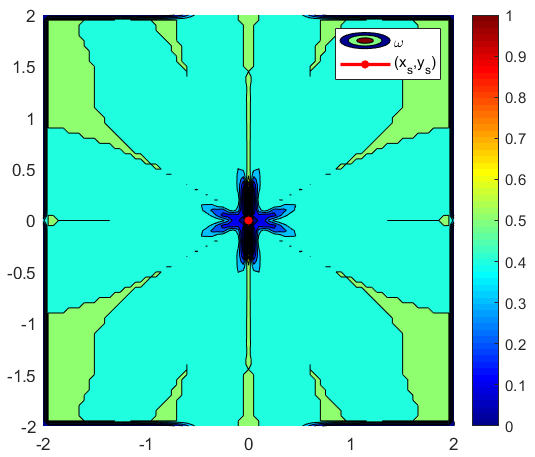}}
\end{subfigure}
\vspace{-0.2cm}
\caption{\footnotesize{Test 4. Singularity on a grid point. Results obtained using $g(\omega)$ with $\omega_{split}$ (left), $\omega^P_{2D}$ (middle) and $\omega^F_{2D}$ (right), for $\Delta x=\Delta y=0.1$ and $\Delta x=\Delta y=0.05$.}\label{fig2_9}}
\end{figure}
\begin{figure}[h!]
\centering
\begin{subfigure}{0.3\textwidth}
{\includegraphics[width=\textwidth]{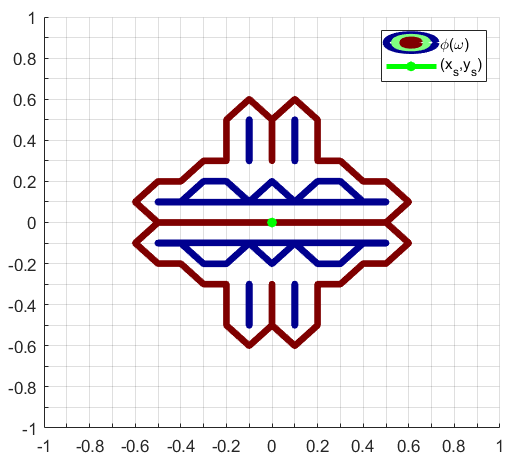}}
\end{subfigure}
\begin{subfigure}{0.3\textwidth}
{\includegraphics[width=\textwidth]{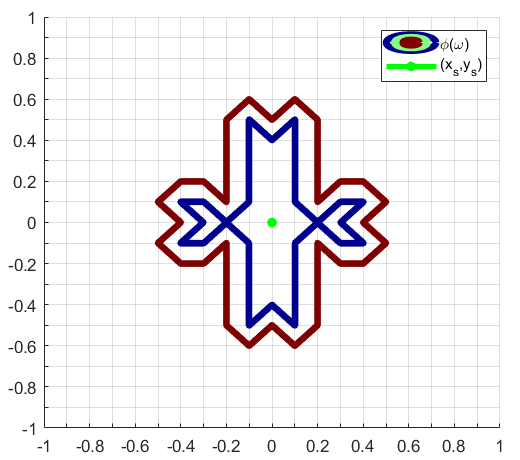}}
\end{subfigure}
\begin{subfigure}{0.345\textwidth}
{\includegraphics[width=\textwidth]{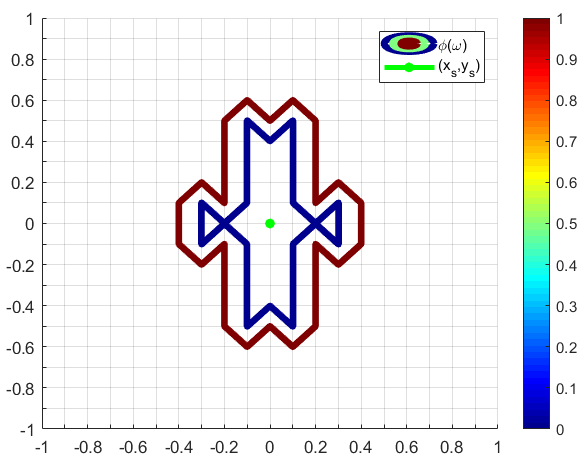}}
\end{subfigure}\\
\begin{subfigure}{0.3\textwidth}
{\includegraphics[width=\textwidth]{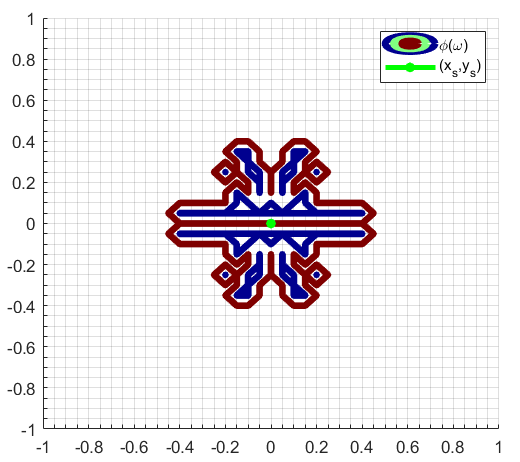}}
\end{subfigure}
\begin{subfigure}{0.3\textwidth}
{\includegraphics[width=\textwidth]{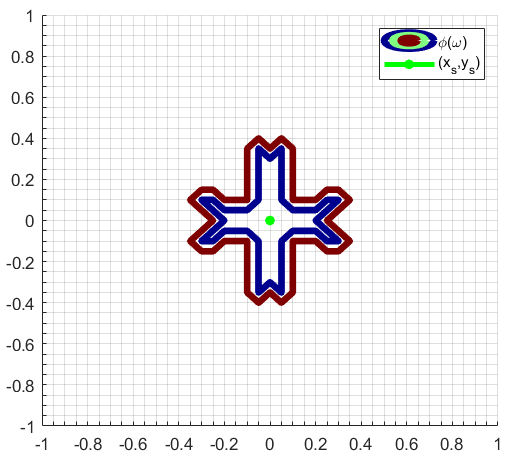}}
\end{subfigure}
\begin{subfigure}{0.344\textwidth}
{\includegraphics[width=\textwidth]{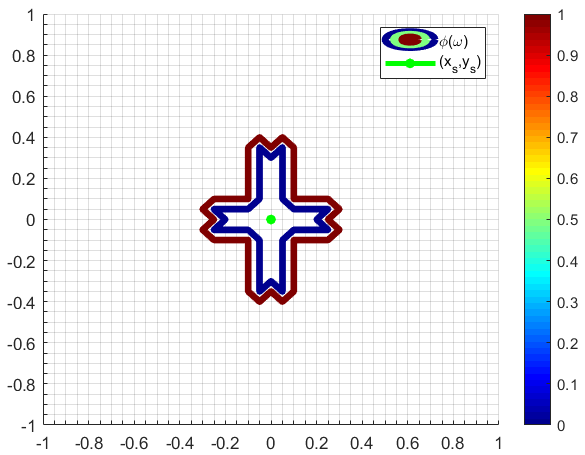}}
\end{subfigure}
\vspace{-0.2cm}
\caption{\footnotesize{Test 4. Singularity on a grid point. Results obtained using $\phi_{split}$ (left), $\phi^P_{2D}$ (middle) and $\phi^F_{2D}$ (right)), for $\Delta x=\Delta y=0.1$ and $\Delta x=\Delta y=0.05$.}\label{fig2_10}}
\end{figure}
In order to highlight the major drawback of the splitting indicator $\omega_{split}$, based on one-directional reconstructions, we perform the test with the point of non-differentiabilty falling on a grid point. The results obtained are collected in Figs. \ref{fig2_9}-\ref{fig2_10} and show that both indicators, $\omega^F_{2D}$ and $\omega^P_{2D}$ are able to detect the singular point in the origin, although the region is spread along the axes directions, especially in the case of $\phi_{2D}^P$, whereas $\phi^F_{2D}$ presents more precise and symmetric results. On the other hand, as could be expected, the splitting indicator does not recognize the low regularity in the origin, since both directional derivatives exist and are equal to $0$. Moreover, $\omega_{split}$ shows a rather peculiar behavior in the region around the origin, detecting more false positive than the other indicators.

These last results are rather promising and suggest that our definition is the correct one for identifying the lack of regularity of functions, also when the gradient exists but it is not continuous. Evidently, deeper investigations seem necessary in order to justify the enlargement of the detected singular area along the axis directions and, in general, the differences between the two versions  $\omega^F_{2D}$ and $\omega^P_{2D}$.

\subsection{Tests on the Adaptive Filtered schemes with 2D smoothness indicators}\label{sec:AFS_tests}
Here we present some two-dimensional tests of evolution problems designed to show the properties of our AF scheme when the switching from one scheme to the other is regulated by the smoothness indicators studied in Sect. \ref{sec:ind_2D}.
Our goal is also to compare the performance of our AF schemes $S^{AF}$ with those of the Filtered Schemes $S^F$ proposed in \cite{BFS16}, and of the WENO scheme of second/third order proposed in \cite{JP00}. 
Regarding the WENO 2/3 scheme, here we use the same efficient implementation suggested in \cite{JP00} (Remark 1 on page 2130), adding the improvement suggested in \cite{ABM10}, that consists in  choosing $\sigma=\Delta x^2$, instead of $\sigma=10^{-8}$ as done in the original paper by \cite{JP00}. 
Concerning the basic Filtered Scheme, we use the implementation suggested in \cite{BFS16}, but avoiding the use of the limiter correction in all the numerical tests. This is mainly because we want to show that the unstability problems that have been fixed through the introduction of the limiter in \cite{BFS16}, can be solved by the adaptive procedure and the function $\phi$ proposed.  
Moreover, since the main aim is to use even higher order schemes (fourth order schemes) and to show the reliability of the adaptive tuning, the use of the limiter would be counterproductive, since it would inevitably limit also the full accuracy of the resulting scheme. 
This fact has been proved in various forms in literature (the scheme is TVD) and can be easily confirmed through some easy numerical tests, such as the transport of a regular function, considered as Test 5. 
For further comparisons with higher order schemes, we implemented also WENO 3/5 with RK3 (TVD) and RK4 (not TVD) as time integrator, presenting the results in the first next test which considers a regular function since in this case we can better appreciate the differences. 

For each test we specify the monotone and high-order schemes composing the filtered schemes, as well as the CFL number $\lambda:=\max\{\lambda_x,\lambda_y\}$. 
In particular, the CFL number will be chosen to satisfy the CFL condition
\begin{equation}
\max\{\lambda_x \max|H_p(p,q)|,\lambda_y \max|H_q(p,q)|\}\leq \frac{1}{2},
\end{equation}
which is more easy to implement than \eqref{cond_CFL_2D}. 
We also compute the errors and orders in  $L^\infty$ and $L^1$ norm. 
For all the following tests, we use homogeneous Neumann boundary conditions, except for Test 9, for which periodic boundary conditions are needed.

\vspace{0.2cm}\noindent
{\bf Test 5.} As a first example of evolution problem, we consider the transport of a very regular function at constant velocity, that is 
\begin{equation}
\label{ex6:eq1}
\left\{
\begin{array}{ll}
v_t+v_x+v_y=0,\qquad &\textrm{ in } (0,T)\times\Omega, \\
v(0,x,y)=v_0(x,y),&\textrm{ in }\Omega
\end{array}
\right.
\end{equation}
where $\Omega=[-2,2]^2$ and $T=0.9$, with the regular initial condition
\begin{equation}
v_0(x,y)=\max\left\{0, 1-x^2-y^2\right\}^5.
\end{equation}
The chosen CFL number is $\lambda=0.2<\frac{1}{2}$. 
For this regular test, we start considering the \emph{Heun-Centered} (HC) scheme, as defined in \eqref{HC_2D}-\eqref{RK2}, in addition to the WENO scheme of second/third order proposed in \cite{JP00}, and the AF schemes which use, as high-order method $S^A$, the HC scheme or the \emph{fourth-order central Runge-Kutta} method  defined in \eqref{C4ord}-\eqref{RK4} and from now on denoted by RKC4 for brevity. 
Looking at Tab. \ref{ex1:table1} we can note that the AF-HC basically coincides with the simple HC scheme when the solution is regular, as expected, and the results clearly testify the success of the filtering process, with both AF schemes achieving the optimal order of the high-order scheme in both norms. 
\begin{table}[h]
\caption{Test 5. Errors and orders in $L^\infty$ and  $L^1$ norms. \label{ex1:table1}}
\centering
\begin{tabular}{c c|c c |c c |c c  | c c}
& & \multicolumn{2}{|c|}{\bf HC }&\multicolumn{2}{|c|}{\bf AF-HC }&\multicolumn{2}{|c|}{\bf AF-RKC4}&\multicolumn{2}{|c}{\bf WENO 2/3}\\
\hline
\hline
$N_x$ & $N_t$ & $L^\infty$ Err & Ord & $L^\infty$ Err & Ord  & $L^\infty$ Err & Ord  & $L^\infty$ Err & Ord  \\
\hline
$40$ &	$30$		& $9.39$e-$02$	& 	&	$9.53$e-$02$	&	&  	$6.32$e-$03$	& 	& $5.30$e-$02$ &  	\\

$80$ &	$60$		& $2.23$e-$02$ &	$2.07$	& 	$2.23$e-$02$	 &	$2.10$ & 	 $4.26$e-$04$ &	$3.89$ 	& $4.85$e-$03$ &	$3.45$	  \\

$160$ &	$120$	& $5.52$e-$03$ &	$2.02$	& $5.52$e-$03$ &	$2.02$ & 	 $2.65$e-$05$ &	$4.01$  & $5.77$e-$04$ &	$3.07$	\\

$320$ &	$240$	& $1.38$e-$03$ &	$2.00$	&  $1.38$e-$03$ &	$2.00$ & 	 $1.96$e-$06$ &	$3.76$ & $7.22$e-$05$ &	$3.00$	\\
\hline
\hline
$N_x$ & $N_t$ & $L^1$ Err & Ord  & $L^1$ Err & Ord&	 $L^1$ Err & Ord &	 $L^1$ Err & Ord 	\\
\hline
$40$ &	$30$		&	$7.11$e-$02$	 &	 &  $6.90$e-$02$	&	& $7.97$e-$03$	&  	& $3.26$e-$02$  	&  	\\

$80$ &	$60$		&	$1.71$e-$02$	& 	$2.06$ & $1.71$e-$02$	& 	$2.02$ & $5.43$e-$04$	& 	$3.88$  	& $3.81$e-$03$	& 	$3.10$   \\

$160$ &	$120$	&	$4.18$e-$03$	& 	$2.03$ &$4.18$e-$03$	& 	$2.03$ & $3.48$e-$05$	& 	$3.96$  	& $4.51$e-$04$	&  $3.08$	\\

$320$ &	$240$	&	$1.04$e-$04$	& 	$2.01$ & $1.04$e-$04$	& 	$2.01$ & $2.14$e-$06$	& 	$4.03$  	 &  $5.52$e-$05$	& 	$3.03$	\\
\hline
\end{tabular}
\end{table}
\begin{table}[H] 
\caption{Test 5. CPU times in seconds. \label{ex1:table2}}
\centering
\begin{tabular}{c c| p{1.7cm}  | p{1.7cm} | p{1.7cm}  | c }
$N_x$ & $N_t$ & \centering {\bf  HC } & \centering {\bf AF-HC }& \centering {\bf AF-RKC4}& {\bf WENO 2/3}\\
\hline
\hline
$40$ &	$30$	&\centering  	$0.009\ s$ 	&\centering 	$0.052\ s$ 	&\centering  $0.057\ s$  &$0.119\ s$   \\

$80$ &	$60$ &\centering 	$0.049\ s$  &\centering 	$0.376\ s$   &\centering $0.446\ s$  	& $0.784\ s$    \\

$160$ &	$120$ &\centering  	$0.392\ s$  &\centering  	$3.097\ s$   &\centering $3.630\ s$ 	& $6.266\ s$ \\

$320$ &	$240$ &\centering 	$2.377\ s$  &\centering  	$26.28\ s$   &\centering  $28.52\ s$ 	& $51.65\ s$ 	\\
\hline
\end{tabular}
\end{table}
\begin{table}[h]
\caption{Test 5. Errors and orders in $L^\infty$ and  $L^1$ norms. \label{ex1:table3}}
\centering
\begin{tabular}{c c|c c |c c  | c c}
& & \multicolumn{2}{|c|}{\bf WENO 3/5-RK3 }&\multicolumn{2}{|c|}{\bf WENO 3/5-RK4}&\multicolumn{2}{|c}{\bf AF-RKC4}\\
\hline
\hline
$N_x$ & $N_t$ &  $L^\infty$ Err & Ord  & $L^\infty$ Err & Ord  & $L^\infty$ Err & Ord  \\
\hline
$40$ &	$30$		&	$1.00$e-$02$	&	&  	$9.10$e-$03$	& 	& $6.32$e-$03$ &  	\\

$80$ &	$60$		& 	$5.27$e-$04$ &	$4.25$ & 	 $5.34$e-$04$ &	$4.09$ 	& $4.26$e-$04$ &	$3.89$	  \\

$160$ &	$120$	& $2.74$e-$05$ &	$4.27$ & 	 $2.27$e-$05$ &	$4.55$  & $2.65$e-$05$ &	$4.01$	\\

$320$ &	$240$	&  $2.28$e-$06$ &	$3.59$ & 	$8.97$e-$07$ &	$4.66$ & $1.96$e-$06$ &	$3.76$	\\
\hline
\hline
$N_x$ & $N_t$ & $L^1$ Err & Ord&	 $L^1$ Err & Ord &	 $L^1$ Err & Ord 	\\
\hline
$40$ &	$30$		&  $1.19$e-$02$	&	& $1.15$e-$02$	&  	& $7.97$e-$03$  	&  	\\

$80$ &	$60$		& $5.65$e-$04$	& 	$4.40$ & $5.00$e-$04$	& 	$4.53$  	& $5.43$e-$04$	& 	$3.88$   \\

$160$ &	$120$	 &$2.72$e-$05$	& 	$4.38$ & $1.69$e-$05$	& 	$4.89$  	& $3.48$e-$05$	& 	 $3.96$	\\

$320$ &	$240$	 & $2.11$e-$06$	& 	$3.68$ & $5.80$e-$07$	& 	$4.86$  	 &  $2.14$e-$06$	& 	$4.03$	\\
\hline
\end{tabular}
\end{table}
\begin{table}[H] 
\caption{Test 5. CPU times in seconds. \label{ex1:table4}}
\centering
\begin{tabular}{c c| c  | c   |  c   }
$N_x$ & $N_t$ & {\bf  WENO 3/5-RK3 } &  {\bf WENO  3/5-RK4}&\hspace{0.3cm} {\bf AF-RKC4 \hspace*{0.3cm}}\\
\hline
\hline
$40$ &	$30$	 	&	$0.098\ s$ 	& $0.182\ s$  &$0.057\ s$   \\

$80$ &	$60$  &	$0.786\ s$   & $1.327\ s$  	& $0.446\ s$    \\

$160$ &	$120$   &	$6.555\ s$   & $10.32\ s$ 	& $3.630\ s$ \\

$320$ &	$240$   &  	$49.57\ s$   &  $81.23\ s$ 	& $28.52\ s$ 	\\
\hline
\end{tabular}
\end{table}
\noindent From Tab. \ref{ex1:table2} we can see that the AF-HC and the AF-RKC4 schemes increase considerably the computational cost of the simple HC scheme, due to the computation of the 2D-indicators which is the heavier procedure, but they are both faster than the WENO scheme, requiring almost half time for all refinements. This is because in our approach the computation of the indicators must be done once for each iteration, independently on the chosen high-order scheme, whereas for the WENO scheme it has to be repeated at each step of the Runge-Kutta integration. 
This imply that in our case the stencil of the indicators is always $5\times5$ points. 
This is confirmed by the slight increase in the computational cost of the AF-RKC4 scheme with respect to the AF-HC scheme, despite its much more involved definition. \\
For this regular case, we also compare the performance of our fourth-order  AF-RKC4 scheme with those of the two WENO 3/5 schemes. As visible in Tab. \ref{ex1:table3}, the two WENO 3/5 schemes implemented give comparable results with respect to our AF-RKC4, but from Tab. \ref{ex1:table4} we can note the advantages of the filtered scheme, which obtains comparable results in less time with respect to both WENO 3/5 schemes, especially refining the grid. 

\vspace{0.2cm}\noindent
{\bf Test 6.}
In this second test we consider a slightly more challenging situation, in which the hamiltonian depends also on the space variables, that is
\begin{equation}
\label{ex2:eq1}
\left\{
\begin{array}{l}
v_t-yv_x+xv_y=0\qquad \textrm{ in } (0,2\pi)\times\Omega, \\
v(0,x,y)=\max\left\{0,\frac{r_0-\left(x+1\right)^2-y^2}{r_0}\right\}^4,  \textrm{ in } \Omega,
\end{array}
\right.
\end{equation}
with $r_0=0.5$ and $\Omega=[-2.5,2.5]^2$. This problem models the rotation of a regular (but rather steep) function around the origin. The initial datum is a regularized paraboloid centered in $(-1,0)$, which is equal to $1$ in the center and $0$ on the circle of radius $\sqrt{r_0}$. The CFL number is taken as $\lambda=\frac{\pi}{16}\approx 0.196\leq \frac{1}{2}\max\{H_p^{-1},H_q^{-1}\}=0.2$. 
For this test, we consider the basic filtered scheme of \cite{BFS16} with the HC method as $S^A$, called in the following as F-HC, and the AF-HC, AF-RKC4 and WENO 2/3 schemes defined in the previous test. 
\begin{figure}[h!]
\centering
\includegraphics[width=0.4\textwidth]{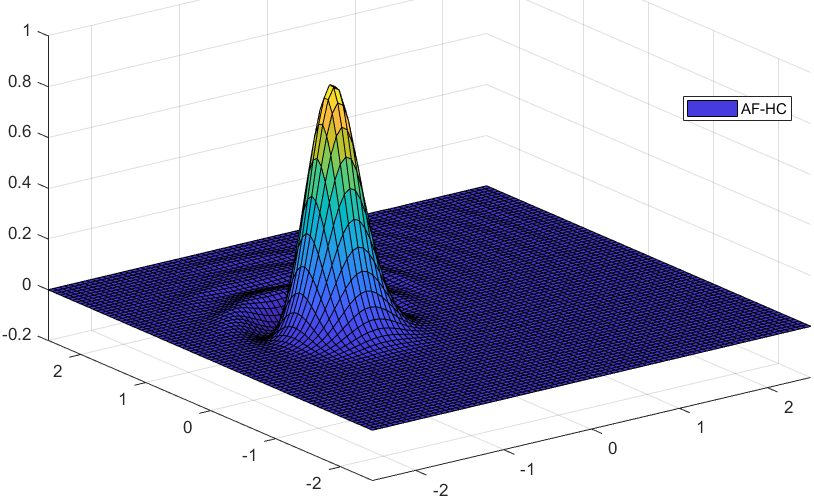}
\qquad
\includegraphics[width=0.4\textwidth]{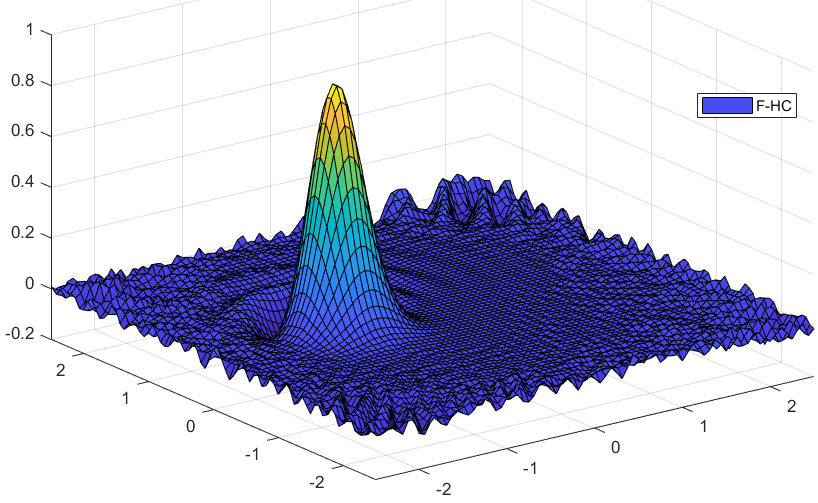}\\
\includegraphics[width=0.4\textwidth]{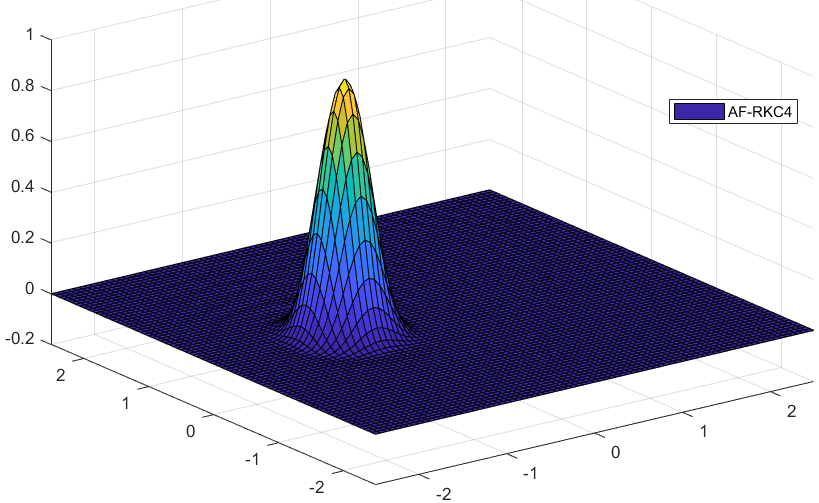}
\qquad
\includegraphics[width=0.4\textwidth]{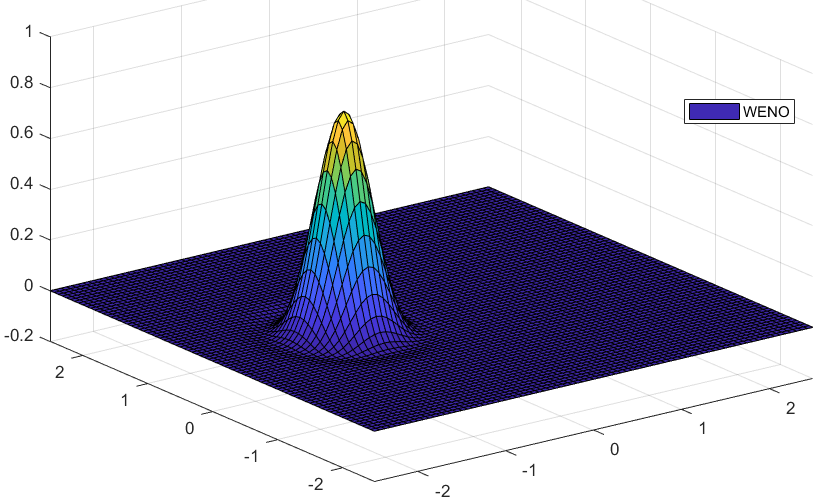}
\caption{\small{Test 6.} Plots of the computed solutions at time $T=2\pi$ with $\Delta x=0.05$. First row: AF-HC (left), F-HC (right). Second row: AF-RKC4 (left), WENO 2/3 (right).
\label{ex2:fig1}}
\end{figure}
\begin{table}[h!]
\caption{Test 6. Errors and orders in $L^\infty$ and  $L^1$ norms. \label{ex2:table1}}
\centering
\begin{tabular}{c c|c c |c c |c c  | c c}
& & \multicolumn{2}{|c|}{\bf F-HC ($20\Delta x$) }&\multicolumn{2}{|c|}{\bf AF-HC }&\multicolumn{2}{|c|}{\bf AF-RKC4}&\multicolumn{2}{|c}{\bf WENO 2/3}\\
\hline
\hline
$N_x$ & $N_t$ & $L^\infty$ Err & Ord & $L^\infty$ Err & Ord  & $L^\infty$ Err & Ord  & $L^\infty$ Err & Ord  \\
\hline
$20$ &	$128$	& $9.24$e-$01$	& 	&	$8.31$e-$01$	&	&  	$6.36$e-$01$	& 	& $7.73$e-$01$ &  	\\

$40$ &	$256$	& $5.88$e-$01$ &	$0.65$	& $6.27$e-$01$ &	$0.41$ & 	$4.09$e-$01$ &	$0.64$ & $4.98$e-$01$ &	$0.64$	 \\

$80$ &	$512$	& $3.70$e-$01$ &	$0.67$	& $3.71$e-$01$ &	$0.76$ & 	$2.74$e-$02$ &	$3.90$  & $1.39$e-$01$ &	$1.84$	\\

$160$ &	$1024$	& $1.02$e-$01$ &	$1.86$	& $1.01$e-$01$ &	$1.88$ & 	 $1.63$e-$03$ &	$4.07$ & $1.38$e-$02$ &	$3.34$	\\
\hline
\hline
$N_x$ & $N_t$ & $L^1$ Err & Ord  & $L^1$ Err & Ord&	 $L^1$ Err & Ord &	 $L^1$ Err & Ord 	\\
\hline
$20$ &	$128$	& $1.47$e+$00$ &	 &  $9.25$e-$01$	&	& $4.66$e-$01$	&  	& $3.85$e-$01$  	&  	\\

$40$ &	$256$	& $7.00$e-$01$	& 	$1.07$ 	& $4.29$e-$01$	& 	$1.11$ & $1.58$e-$01$	& 	$1.56$  	& $1.92$e-$01$	& 	$ 1.00$   \\

$80$ &	$512$	& $3.56$e-$01$	& 	$0.98$	 & $1.70$e-$01$	& 	$1.33$ & $1.83$e-$02$	& 	$3.12$  & $5.24$e-$02$	& 	$1.87$	\\

$160$ &	$1024$	& $1.08$e-$01$	& 	$1.72$ 	& $5.02$e-$02$	& 	$1.76$ & $ 1.20$e-$03$	& 	$3.93$  &  $7.38$e-$03$	& 	$2.83$	\\
\hline
\end{tabular}
\end{table}
\begin{table}[h!] 
\caption{Test 6. CPU times in seconds. \label{ex2:table2}}
\centering
\begin{tabular}{c c| p{1.7cm}  | p{1.7cm} | p{1.7cm}  | c }
$N_x$ & $N_t$ & \centering {\bf  F-HC } & \centering {\bf AF-HC }& \centering {\bf AF-RKC4}& {\bf WENO 2/3}\\
\hline
\hline
$20$ &	$128$ &\centering 	$0.009\ s$  &\centering 	$0.048\ s$   &\centering $0.057\ s$ 	& $0.124\ s$   \\

$40$ &	$256$ &\centering 	$0.052\ s$  &\centering 	$0.394\ s$   &\centering $0.457\ s$  	& $0.845\ s$    \\

$80$ &	$512$ &\centering 	$0.365\ s$   &\centering  	$3.348\ s$   &\centering $3.793\ s$   	& $6.779\ s$ \\

$160$ &	$1024$ &\centering 	$2.806\ s$  &\centering  	$26.20\ s$   &\centering $30.40\ s$   	 & $55.87\ s$ 	\\
\hline
\end{tabular}
\end{table}

Looking at Fig. \ref{ex2:fig1} we can clearly see the advantages provided by the automatic tuning of the parameter $\varepsilon^n$ and the stabilizing properties of the $\phi$ function. In fact, our AF schemes, especially AF-RKC4, are able to almost completely nullify the oscillations caused by the unstable HC scheme, whereas the simple F-HC scheme with $\varepsilon=20\Delta x$ is not able to do that, producing significant visible oscillations. Moreover, for the F-HC scheme the oscillations keep on being amplified as time goes on, reducing the effective accuracy of the scheme. Despite this qualitative graphically evident improvement, the errors and orders of the filtered schemes that use the HC method are rather close (see Tab.  \ref{ex2:table1}), with the basic F-HC scheme obtaining equal or slightly different errors in $L^\infty$, but losing evidently with respect to the adaptive version AF-HC in $L^1$ norm, which gives a better measure of the overall approximation. 
Looking at the results regarding the WENO 2/3 scheme in the same Tab. \ref{ex2:table1}, we can note that it performs better than the second order filtered schemes in both norms, in terms of orders and errors, as it could be expected from the regularity of the evolving function, but it performs worse when compared to the fourth order scheme AF-RKC4 in terms of errors and resolution. This is visible also from Fig. \ref{ex2:fig1}, second row, since the WENO 2/3 scheme presents wider oscillations and flattens  
more the profile of the solution with respect to the AF-RKC4. 
Regarding the CPU times, looking at Tab. \ref{ex2:table2} we can see that the basic filtered scheme is much faster than the other proposed schemes, requiring computational times similar to the simple high-order scheme. This fact is rather natural, since it does not require any computation of smoothness indicators. Comparing the AF schemes with respect to WENO 2/3, we have a confirmation of what already noted in the previous example, with almost half-time of AF schemes with respect to WENO 2/3. 

\vspace{0.2cm}\noindent
{\bf Test 7.}
In this test,  we consider the following eikonal equation in two dimensions with constant velocity
\begin{equation}
\label{ex3:eq1}
\left\{
\begin{array}{l}
v_t+\sqrt{v_x^2+v_y^2}=0\qquad \textrm{ in } (0,T)\times\Omega, \\
v(0,x,y)=v_0(x,y),
\end{array}
\right.
\end{equation}
where $\Omega=[-3,3]^2$. 
This equation appears in front propagation problems through the level set method (see \cite{S85,OS88} for details). 
Here we focus on a simple expansion with constant velocity in the case of a merging of two separate fronts and we compare the same schemes used in Test 6 in terms of  error and resolution of the $0$-level set, varying the regularity of the representation function $v_0$. 
The CFL number is set to $\lambda=0.25<\frac{1}{2}$ for both cases. In the first case (Case a) we consider two collapsing regular representations, with the $0$-level set composed by two circles, that is 
\begin{equation}
v_0(x,y)=0.5-0.5\max\left(\max(0,f_-)^4,\max(0,f_+)^4\right), 
\end{equation}
with
\begin{equation*}
f_\pm=\frac{1-\left(x\pm\frac{\sqrt{2}}{2}\right)^2-\left(y\pm\frac{\sqrt{2}}{2}\right)^2}{1-r_0^2},  \qquad  r_0=0.5.
\end{equation*}
In the second case (Case b), we consider the evolution and then the merging of a sharper representation function, which initial $0$-level set is composed by two squares, i.e. 
\begin{align}
v_0(x,y)=\min\left\{f_1-r_0,f_2-r_0,\frac{1}{2}r_0^2\right\}, \quad \textrm{with}\quad f_1=\max\left\{\left|x- \frac{\sqrt{2}}{2}\right|,\left|y- \frac{\sqrt{2}}{2}\right|\right\},\\
f_2=\max\left\{\left|\left(\sqrt{r_0}x+\frac{\sqrt{2}}{2}\right)+\left(\sqrt{r_0}y+\frac{\sqrt{2}}{2}\right)\right|,\left|\left(\sqrt{r_0}x+\frac{\sqrt{2}}{2}\right)-\left(\sqrt{r_0}y+\frac{\sqrt{2}}{2}\right)\right|\right\}, \nonumber
\end{align}
where now $r_0=0.5$ is a parameter needed to control magnitude of the square $0$-level front. 
For \emph{Case a} the final time is set to $T=0.6$, whereas for \emph{Case b} the solution is computed at $T=0.7$, in order to have the two fronts merge. 

Our goal is to inspect the behavior of the schemes when varying the ``number of singularities'' in the evolution. One is directly provided  by the hamiltonian, since it is only Lipschitz continuous and presents a saddle point in the origin, others may be already present in the initial datum or caused by some merging. 
In this context, looking at Tab. \ref{ex3a:table1} for Case a, we can see that the better results are given by AF-RKC4 scheme in $L^1$ norm in most situations, even with respect to the  WENO 2/3 results, whereas the more compact AF-HC scheme gives the best results in $L^\infty$ among the three filtered schemes, but still worse compared to the WENO 2/3 ones, which results more accurate. This is probably due to the different stencils used by the schemes: the lower order schemes, F-HC and AF-HC, use more compact stencils ($3\times3$ points, instead of $17\times 17$ points needed to the AF-RKC4 scheme), whereas the WENO 2/3 procedure uses an adaptive stencil, although pretty wide. This implies that the fourth order AF-RKC4 scheme suffers more deeply the presence of singularities, especially in $L^\infty$ norm. 
\begin{table}[h!]
\caption{Test 7a. Errors and orders in $L^\infty$ and  $L^1$ norms. \label{ex3a:table1}}
\centering
\begin{tabular}{c c|c c|c c|c c| c c}
& & \multicolumn{2}{|c|}{\bf F-HC ($20\Delta x$)}&\multicolumn{2}{|c|}{\bf AF-HC}&\multicolumn{2}{|c|}{\bf AF-RKC4}&\multicolumn{2}{|c}{\bf WENO 2/3}\\
\hline
\hline
$N_x$ & $N_t$ &$L^\infty$ Err &Ord  & $L^\infty$ Err & Ord& $L^\infty$ Err & Ord & $L^\infty$ Err & Ord \\
\hline
$30$ &	$12$	& $2.31$e-$01$	&	&	$2.08$e-$01$	& &$1.96$e-$01$ & &$2.03$e-$01$			\\

$60$ &	$24$	& $8.84$e-$02$ &	$1.38$ & $6.45$e-$02$ &	$1.69$ &$1.06$e-$01$ &	$0.88$    & $5.38$e-$02$ &	$1.92$ \\

$120$ &	$48$	& $5.42$e-$02$ &	$0.71$ & $4.93$e-$02$ &	$0.39$ & $6.02$e-$02$ &	$0.82$	& $2.84$e-$02$ &	$0.92$ \\

$240$ &	$96$	& $4.71$e-$02$ &	$0.20$ & $1.78$e-$02$ &	$1.47$ & $3.34$e-$02$ &	$0.85$   & $1.80$e-$02$ &	$0.66$\\
\hline
\hline
$N_x$ & $N_t$ &$L^1$ Err &Ord  & $L^1$ Err & Ord& $L^1$ Err & Ord & $L^1$ Err & Ord \\
\hline
$30$ &	$12$ &  $1.29$e+$00$  &	&	$1.20$e+$00$	& & $9.25$e-$01$	& & $1.05$e+$00$\\

$60$ &	$24$ & $4.07$e-$01$	& 	$1.67$ &$3.41$e-$01$	& 	$1.82$  & $1.57$e-$01$	& 	$2.55$   & $2.13$e-$01$	& 	$2.31$\\

$120$ &	$48$ & $1.27$e-$01$	& 	$1.67$  & $8.78$e-$02$	& 	$1.96$ & $5.68$e-$02$	& 	$1.47$	& $5.85$e-$02$	& 	$1.86$ \\

$240$ &	$96$ & $5.24$e-$02$	& 	$1.28$ & $3.35$e-$02$	& 	$1.39$	&  $3.39$e-$02$	& 	$0.75$  &$3.21$e-$02$	& 	$0.86$\\
\hline
\end{tabular}
\vspace{-0.1cm}
\end{table}
\begin{table}[h!]
\caption{Test 7a. CPU times in seconds. \label{ex3a:table2}}
\centering
\begin{tabular}{c c| p{1.7cm}  | p{1.7Cm} | p{1.7cm}  | c }$N_x$ & $N_t$ & \centering {\bf  F-HC } & \centering {\bf AF-HC }& \centering {\bf AF-RKC4}& {\bf WENO 2/3}\\
\hline
\hline
$30$ &	$12$	&\centering 	$0.005\ s$ 	&\centering 	$0.014\ s$ 	&\centering $0.017\ s$  &$0.027\ s$   \\

$60$ &	$24$ &\centering 	$0.020\ s$  &\centering 	$0.109\ s$   &\centering $0.129\ s$  	& $0.204\ s$    \\

$120$ &	$48$ &\centering 	$0.119\ s$  &\centering  	$0.851\ s$   &\centering $0.958\ s$   	& $1.475\ s$ \\

$240$ &	$96$ &\centering 	$0.814\ s$  &\centering  	$6.674\ s$   &\centering $7.383\ s$   	 & $12.21\ s$ 	\\
\hline
\end{tabular}
\end{table}
\begin{figure}[h!]
	\centering
	\includegraphics[width=0.29\textwidth]{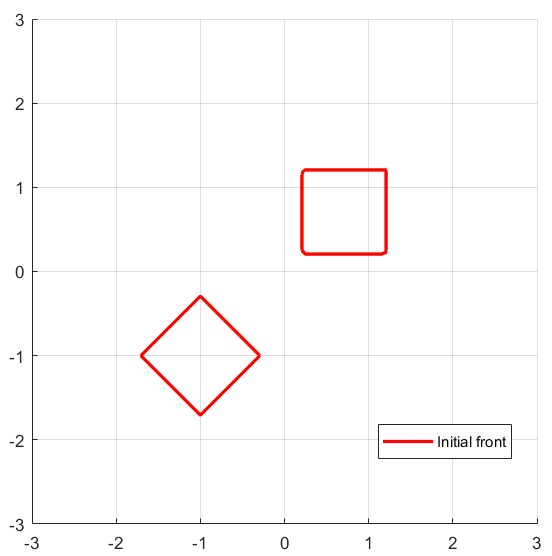}
	\qquad\quad
	\includegraphics[width=0.29\textwidth]{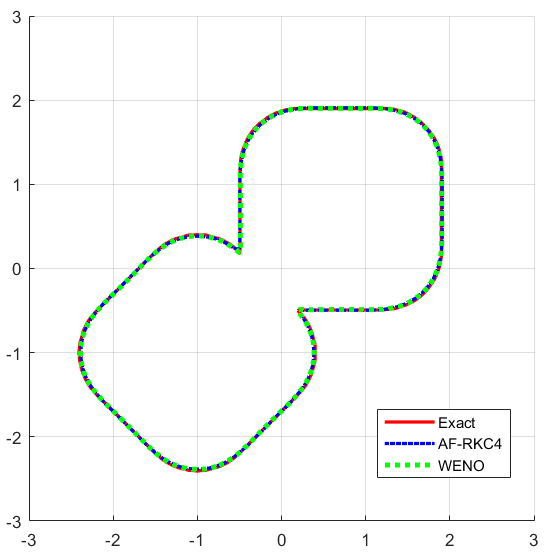}
	\caption{\small{Test 7b. Initial front (left) and fronts at $T=0.7$ using WENO 2/3 and AF-RKC4 schemes (right).}\label{ex3b:fig1}}
\end{figure}
\begin{figure}[h!]
	\centering
	\includegraphics[width=0.32\textwidth]{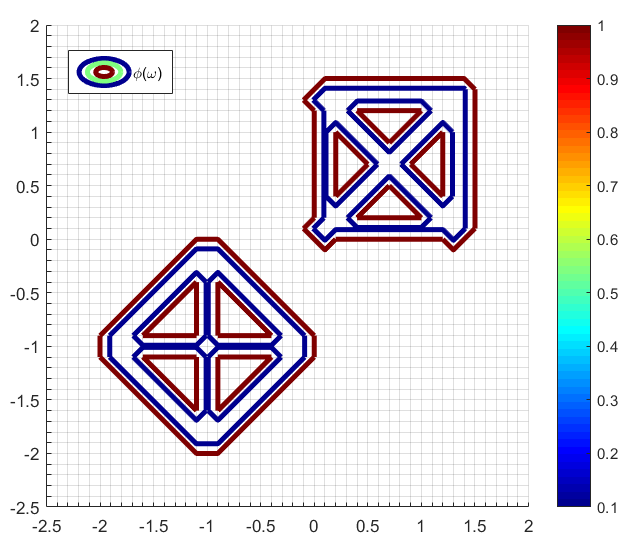}
	\quad\quad
	\includegraphics[width=0.32\textwidth]{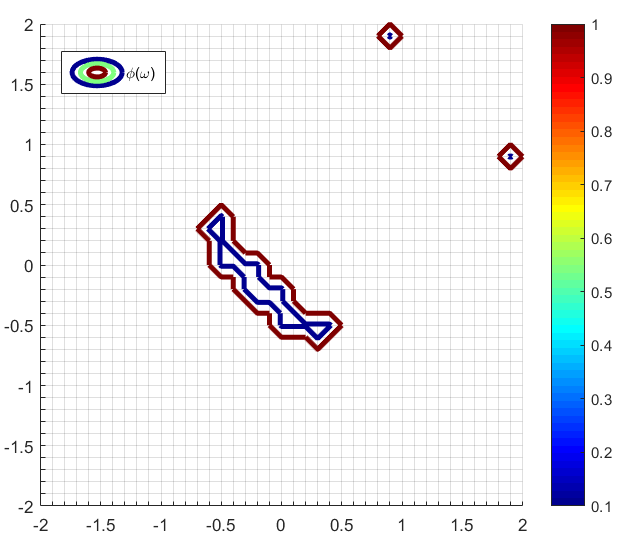}
	\caption{\small{Test 7b. Contour plots of the function $\phi$ at the initial (left) and final (right) time, using the smoothness indicators $g(\omega^F_{2D})$, for $\Delta x=\Delta y=0.1$.
		} \label{ex3b:fig3}}
\end{figure}

This behavior is more evident in Case b, in which the singularities are already present in the initial data, as visible in Fig. \ref{ex3b:fig1}, in which we report also the merging through the $0$-level sets for the WENO 2/3 and the AF-RKC4 schemes.  
Looking at Tab. \ref{ex3b:table1}, we can note that the AF-RKC4 is able to keep the sharpness of the edges also with respect to the WENO 2/3 scheme, although the latter performs better in both norms. 
Moreover, the stabilizing properties of the adaptive filtering, provided by the indicator function $\phi$ visible in Fig. \ref{ex3b:fig3}, are visible. 
In fact, the AF-HC prevents the oscillating behavior (of the numerical order of convergence) of its non-adaptive version with $\varepsilon=20\Delta x$, although in this particular situation it produces often bigger errors. 
Finally, in Fig. \ref{ex3b:fig3} we present the results given by the smoothness indicator $g(\omega_{2D}^F)$ used in the simulation. It is evident that our final choice of the indicators is able to localize the position of the singularities in the initial condition with good precision, and to detect clearly the merging area of the representation functions, also using a rather coarse grid. 
As last remark, looking at Tabs. \ref{ex3a:table2} and \ref{ex3b:table2}, note that in both cases the CPU times of the filtering schemes are lower than those of the WENO 2/3 scheme, especially for finer grids. 
\begin{table}[h!]
\caption{Test 7b. Errors and orders in $L^\infty$ and  $L^1$ norms. \label{ex3b:table1}}
\centering
\begin{tabular}{c c|c c |c c |c c  | c c}
& & \multicolumn{2}{|c|}{\bf F-HC ($20\Delta x$) }&\multicolumn{2}{|c|}{\bf AF-HC }&\multicolumn{2}{|c|}{\bf AF-RKC4}&\multicolumn{2}{|c}{\bf WENO 2/3}\\
\hline
\hline
$N_x$ & $N_t$ & $L^\infty$ Err & Ord & $L^\infty$ Err & Ord  & $L^\infty$ Err & Ord  & $L^\infty$ Err & Ord  \\
\hline
$30$ &	$14$	& $1.64$e-$01$	& 	&	$1.06$e-$01$	&	&  	$1.06$e-$01$	& 	& $7.79$e-$02$ &  	\\

$60$ &	$28$	& $6.93$e-$02$ &	$1.24$	& $8.35$e-$02$ &	$0.35$ & 	 $7.31$e-$02$	 &	$0.54$ & $4.59$e-$02$ &	$0.76$	 \\

$120$ &	$56$	& $6.73$e-$02$ &	$0.04$	& $4.76$e-$02$ &	$0.81$ & 	 $4.24$e-$02$	&	$0.79$  & $2.50$e-$02$ &	$0.88$	\\

$240$ &	$112$	& $8.57$e-$02$ &	$-0.35$	& $2.79$e-$02$ &	$0.77$ & 	 $2.41$e-$02$ &	$0.82$ & $1.72$e-$02$ &	$0.54$	\\
\hline
\hline
$N_x$ & $N_t$ & $L^1$ Err & Ord  & $L^1$ Err & Ord&	 $L^1$ Err & Ord &	 $L^1$ Err & Ord 	\\
\hline
$30$ &	$14$	&$6.11$e-$01$	 &	 &  $8.33$e-$01$	&	& $5.20$e-$01$	&  	& $5.38$e-$01$  	&  	\\

$60$ &	$28$	& $3.18$e-$01$	& 	$0.94$ 	& $3.98$e-$01$	& 	$1.07$ & $3.60$e-$01$	& 	$0.53$ 	& $2.01$e-$01$	& 	$1.42$   \\

$120$ &	$56$	& $1.65$e-$01$	& 	$0.95$	 & $1.89$e-$01$	& 	$1.08$ &	$1.73$e-$01$	& 	$1.05$  & $8.31$e-$02$	& 	$1.28$	\\

$240$ &	$112$	& $1.22$e-$01$	& 	$0.43$ 	& $8.48$e-$02$	& $1.15$ & 	$7.57$e-$02$	& 	$1.19$  &  $4.59$e-$02$	& 	$0.85$	\\
\hline
\end{tabular}
\end{table}
\begin{table}[h!]
\caption{Test 7b. CPU times in seconds. \label{ex3b:table2}}
\centering
\begin{tabular}{c c| p{1.7cm}  | p{1.7Cm} | p{1.7cm}  | c }
$N_x$ & $N_t$ & \centering {\bf  F-HC } & \centering {\bf AF-HC }& \centering {\bf AF-RKC4}& {\bf WENO 2/3}\\
\hline
\hline
$30$ &	$14$&\centering 	$0.006\ s$ 	&\centering 	$0.019\ s$ 	&\centering $0.021\ s$  &$0.031\ s$   \\

$60$ &	$28$ &\centering 	$0.030\ s$  &\centering 	$0.140\ s$   &\centering $0.159\ s$  	& $0.233\ s$    \\

$120$ &	$56$ &\centering 	$0.176\ s$  &\centering  	$1.038\ s$   &\centering $1.156\ s$   	& $1.747\ s$ \\

$240$ &	$112$ &\centering 	$0.924\ s$  &\centering  	$7.932\ s$   &\centering $8.984\ s$   	 & $13.58\ s$ 	\\
\hline
\end{tabular}
\end{table}

\noindent It is important to point out that for producing Fig. \ref{ex3b:fig3} a ``fix'' has been needed to obtain the correct outcome by the indicators. That is because in the critical case in which two singularity curves intersect in one point, as for the two pyramids considered in Case b, it can happen that all the eight cases for formula \eqref{beta_zeta} are such that $\beta_k^{\zeta_1\zeta_2}=O(1)$. This implies that the information given by the function $\phi$ can not really be trusted. In order to solve the problem it is enough to consider the information contained in the function $\phi$ computed on the eight points of the stencil $\mathcal S_0$ around the considered point $(x_j,y_i)$ and to control if there are at least two consecutive values different from $0$. In this way we can be sure that there is at least one sector which is not crossed by any singularity curve and the resulting information can be used. Another possibility, slightly less formal, is to control that at least one of the values of $\phi$ on the corners of the cell $I_{i,j}$ is equal to $1$. 

\vspace{0.2cm}\noindent
{\bf Test 8.} 
In this last test we consider a problem similar to the Burgers' equation in two dimensions,
\begin{equation}
\label{ex4:eq1}
\left\{
\begin{array}{l}
v_t+(v_x+1)^2+(v_y+1)^2=0\qquad \textrm{ in } (0,T)\times\Omega, \\
v(0,x,y)=-0.5\left(\cos(\pi x)+\cos(\pi y)\right),
\end{array}
\right.
\end{equation}
with $\Omega=[0,2]^2$ and periodic boundary conditions. For that problem (\ref{ex4:eq1}), we consider the final time $T=\frac{3}{4\pi^2}$, when the solution is still smooth, and then $T=\frac{3}{2\pi^2}$, time at which an interesting set of singularities develops. We use the same schemes of the previous example and a slightly more restrictive CFL number in order to use coarser grids, which is set to $\lambda=\frac{3}{4\pi^2}\approx 0.076$ for both tests.
The exact solution is computed by the \emph{Hopf-Lax} formula and is as
\begin{equation}
v(t,x,y)=\left(\min_{a\in A} -\frac{1}{2}\cos(\pi(x-at))+\frac{1}{4}a^2-a + \min_{b\in A} -\frac{1}{2}\cos(\pi(y-bt))+\frac{1}{4}b^2-b\right),
\end{equation}
with $A=[-6,6]$, where we used the fact that the evolution can be seen as the sum of one-dimensional evolutions. 

This test summarizes all the behaviors already seen in the previous cases. In fact, 
if the solution is still regular (see Tab. \ref{ex4:table1}) the AF-RKC4 scheme gives the best results and achieves the optimal order in both norms, whereas when the singularity appears (see Tab. \ref{ex4:table2}) it has the usual problems in the $L^\infty$ errors, but better orders in the $L^1$ norm with respect to the second order filtered schemes. Regarding this second case, note that here the WENO 2/3 scheme gets the best results in terms of both errors and orders with respect to all the three filtered schemes. 
\begin{table}[h!]
	\caption{Test 8. $T=3/(4\pi^2)$. Errors and orders in $L^\infty$ and  $L^1$ norms. \label{ex4:table1}}
	\centering
	\begin{tabular}{c c|c c|c c|c c| c c}
		& & \multicolumn{2}{|c|}{\bf F-HC ($20\Delta x$) }&\multicolumn{2}{|c|}{\bf AF-HC }&\multicolumn{2}{|c|}{\bf AF-RKC4}&\multicolumn{2}{|c}{\bf WENO 2/3}\\
		\hline
		\hline
		$N_x$ & $N_t$ &$L^\infty$ Err &Ord  & $L^\infty$ Err & Ord& $L^\infty$ Err & Ord & $L^\infty$ Err & Ord \\
		\hline
		$20$ &	$10$	& $4.66$e-$02$	&	&	$3.22$e-$02$	& & $4.22$e-$02$ & & $1.13$e-$02$	\\
		
		$40$ &	$20$	& $2.40$e-$02$ &	$0.96$  & $9.44$e-$03$ &	$1.77$ & $1.65$e-$03$ &	$4.68$    & $2.57$e-$03$ &	$2.14$ \\
		
		$80$ &	$40$	& $1.38$e-$02$ &	$0.79$ & $2.54$e-$03$ &	$1.89$ & $1.73$e-$04$ &	$3.25$	& $4.24$e-$04$ &	$2.60$  \\
		
		$160$ &	$80$	& $7.21$e-$03$ &	$0.94$ & $6.20$e-$04$ &	$2.04$	& $1.19$e-$05$ &	$3.86$  & $6.30$e-$05$ &	$2.75$\\
		\hline
		\hline
		$N_x$ & $N_t$ &$L^1$ Err &Ord  & $L^1$ Err & Ord& $L^1$ Err & Ord & $L^1$ Err & Ord \\
		\hline
		$20$ &	$10$ & $2.66$e-$02$  &	&	$2.36$e-$02$	& & $1.22$e-$02$	& &  $6.66$e-$03$\\
		
		$40$ &	$20$ & $6.62$e-$03$	& 	$2.01$ & $6.45$e-$03$	& 	$1.87$  & $2.69$e-$04$	& 	$5.50$   & $8.76$e-$04$	& 	$2.93$ \\
		
		$80$ &	$40$ & $1.81$e-$03$	& 	$1.87$  & $1.66$e-$03$	& 	$1.96$ &$2.00$e-$05$	& 	$3.75$	& $1.20$e-$04$	& 	$2.86$ \\
		
		$160$ &	$80$ & $5.15$e-$04$	& 	$1.81$ & $4.15$e-$04$	& 	$2.00$	&  $1.43$e-$06$	& 	$3.81$  & $1.56$e-$05$	& 	$2.95$ \\
		\hline
	\end{tabular}
\end{table}
\begin{table}[h!]
	\caption{Test 8.$T=3/(2\pi^2)$. Errors and orders in $L^\infty$ and  $L^1$ norms. \label{ex4:table2}}
	\centering
	\hspace*{-0.3cm}
	\begin{tabular}{c c|c c|c c|c c| c c}
		& & \multicolumn{2}{|c|}{\bf F-HC ($20\Delta x$) }&\multicolumn{2}{|c|}{\bf AF-HC }&\multicolumn{2}{|c|}{\bf AF-RKC4}&\multicolumn{2}{|c}{\bf WENO 2/3}\\
		\hline
		\hline
		$N_x$ & $N_t$ &$L^\infty$ Err &Ord  & $L^\infty$ Err & Ord& $L^\infty$ Err & Ord & $L^\infty$ Err & Ord \\
		\hline
		$20$ & $20$ & $1.12$e-$01$	&	&	$1.30$e-$01$	& & $1.51$e-$01$ & & $5.81$e-$02$	\\
		
		$40$ & $40$ &$6.03$e-$02$ &	$0.89$ & $6.65$e-$02$ &	$0.96$ &$7.22$e-$02$ &	$1.06$    & $2.37$e-$02$ &	$1.29$ \\
		
		$80$ & $80$ & $2.82$e-$02$ &	$1.10$ & $2.95$e-$02$ &	$1.17$ &  $3.13$e-$02$ &	$1.21$	& $8.07$e-$03$ &	$1.55$  \\
		
		$160$ & $160$ &$1.10$e-$02$ &	$1.36$ & $1.12$e-$02$ &	$1.40$ & $1.17$e-$02$ &	$1.41$   & $1.91$e-$03$ &	$2.08$\\
		\hline
		\hline
		$N_x$ & $N_t$ &$L^1$ Err &Ord  & $L^1$ Err & Ord& $L^1$ Err & Ord & $L^1$ Err & Ord \\
		\hline
		$20$ & $20$ &  $4.17$e-$02$  &	&	$4.37$e-$02$	& & $3.71$e-$02$	& &  $2.10$e-$02$\\
		
		$40$ & $40$ & $1.17$e-$02$	& 	$1.84$ & $1.04$e-$02$	& 	$2.07$  & $8.94$e-$03$	& 	$2.05$   & $4.28$e-$03$	& 	$2.30$\\
		
		$80$ & $80$ & $2.84$e-$03$	& 	$2.04$  & $2.56$e-$03$	& 	$2.02$ & $1.97$e-$03$	& 	$2.19$	& $8.49$e-$04$	& 	$2.33$ \\
		
		$160$ & $160$ & $6.41$e-$04$	& 	$2.15$ & $6.05$e-$04$	& 	$2.08$	& $4.03$e-$04$	& 	$2.29$  & $1.28$e-$04$	& 	$2.73$\\
		\hline
	\end{tabular}
\end{table}
\begin{figure}[h!]
\centering
\includegraphics[width=0.3\textwidth]{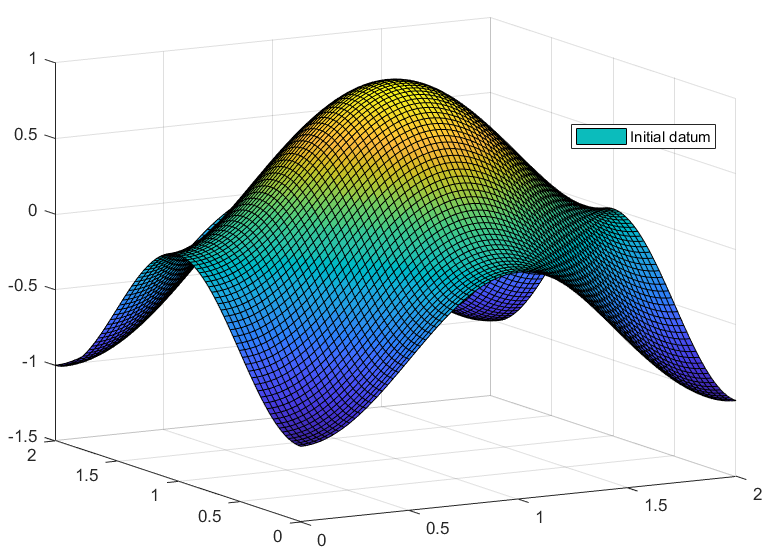}
\qquad
\includegraphics[width=0.3\textwidth]{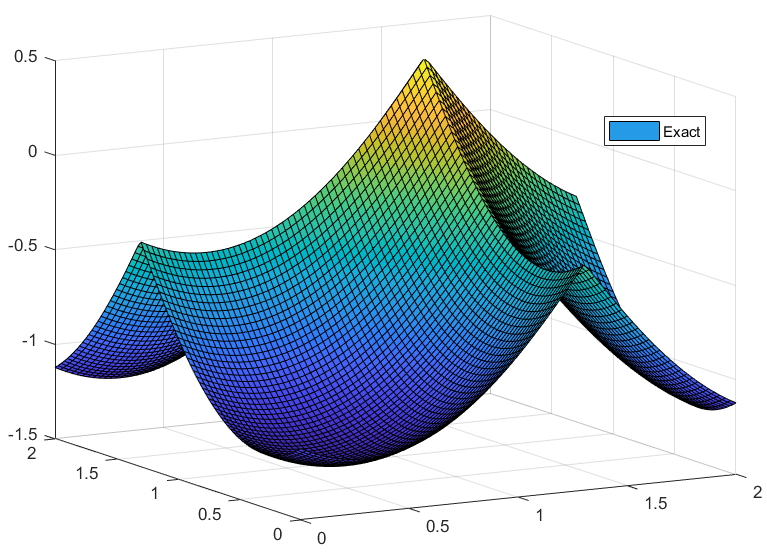}\\
\includegraphics[width=0.3\textwidth]{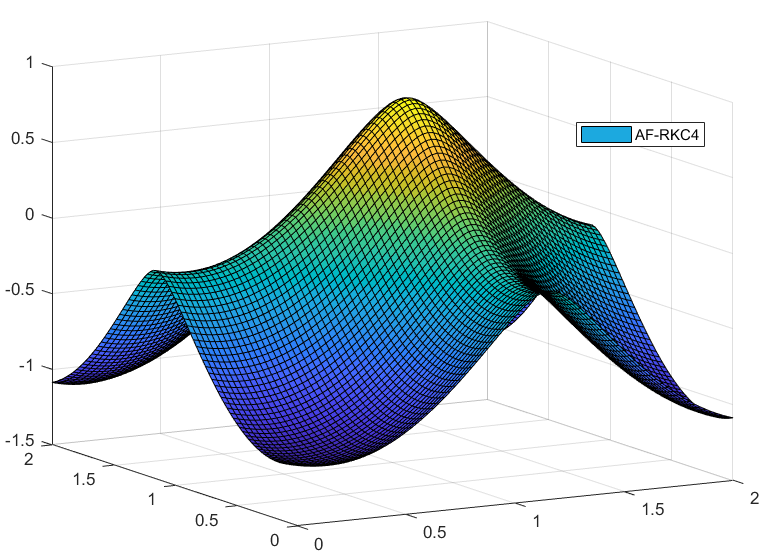}
\qquad
\includegraphics[width=0.3\textwidth]{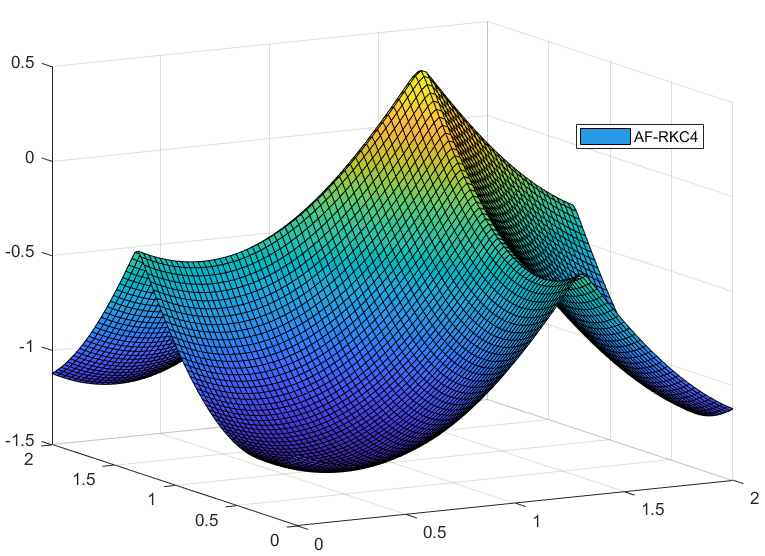}
\caption{\small{Test 8.} Top: Initial data (left) and exact solution at $T=3/(2\pi^2)$ (right). Bottom: solution at $T=3/(4\pi^2)$ (left) and $T=3/(2\pi^2)$ (right) computed by the AF-RKC4 scheme with $\Delta x=\Delta y=0.025$. \label{ex4:fig1}}
\vspace*{-0.2cm}
\end{figure}
\begin{figure}[h!]
\centering
\begin{subfigure}{0.3\textwidth}
{\includegraphics[width=\textwidth]{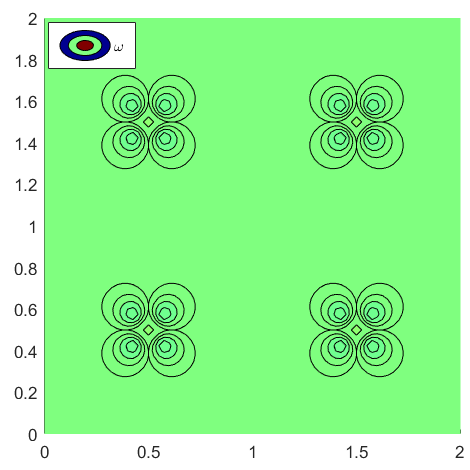}}
\end{subfigure}
\begin{subfigure}{0.3\textwidth}
{\includegraphics[width=\textwidth]{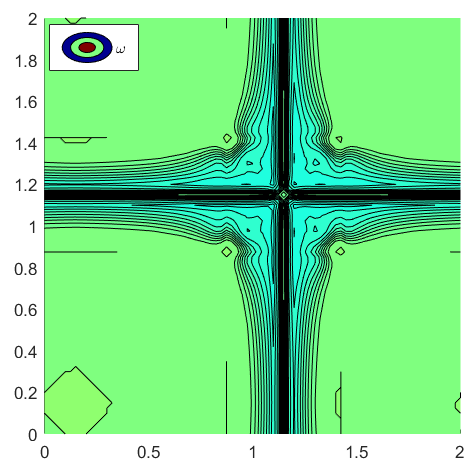}}
\end{subfigure}
\begin{subfigure}{0.36\textwidth}
{\includegraphics[width=\textwidth]{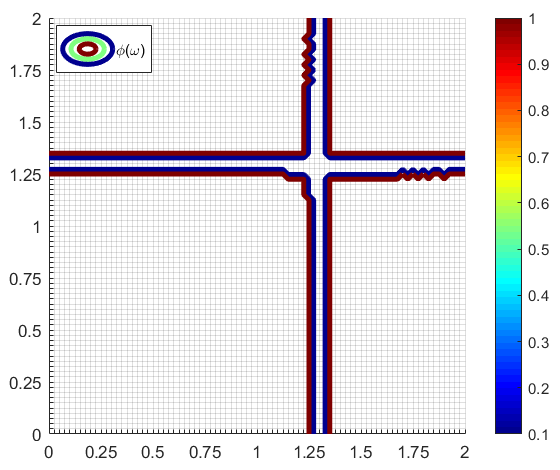}}
\end{subfigure}
\vspace{-0.1cm}
\caption{\footnotesize{ Test 8. Results of the smoothness indicators (obtained using $g(\omega^F_{2D})$), for $\Delta x=\Delta y=0.025$. Plots at initial time (left), at time $T=3/(4\pi^2)$, when the solution is still smooth (middle), and at the final time $T=3/(2\pi^2)$, when the singularity is fully developed (right).}\label{ex4:fig2}}
\end{figure}
\begin{figure}[h!]
\centering
\begin{subfigure}{0.32\textwidth}
{\includegraphics[width=\textwidth]{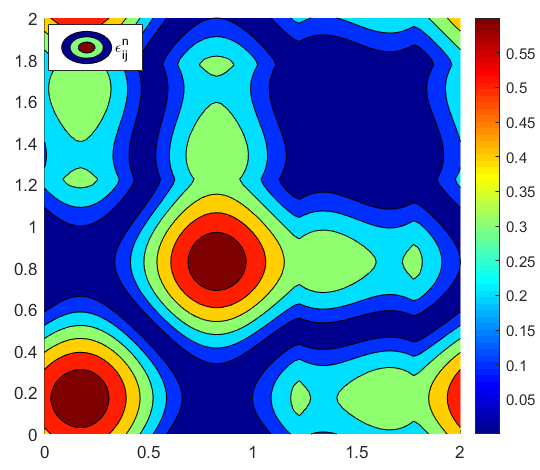}}
\end{subfigure}
\begin{subfigure}{0.32\textwidth}
{\includegraphics[width=\textwidth]{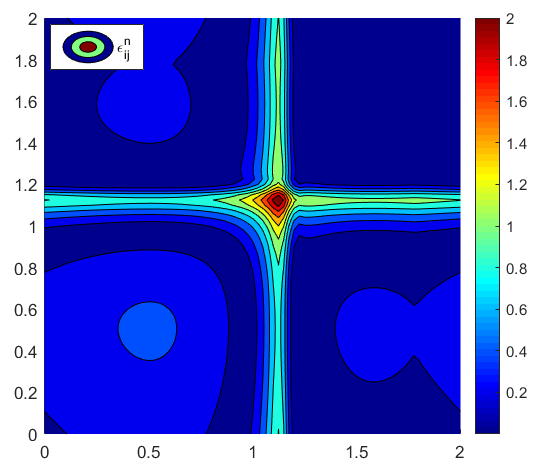}}
\end{subfigure}
\begin{subfigure}{0.32\textwidth}
{\includegraphics[width=\textwidth]{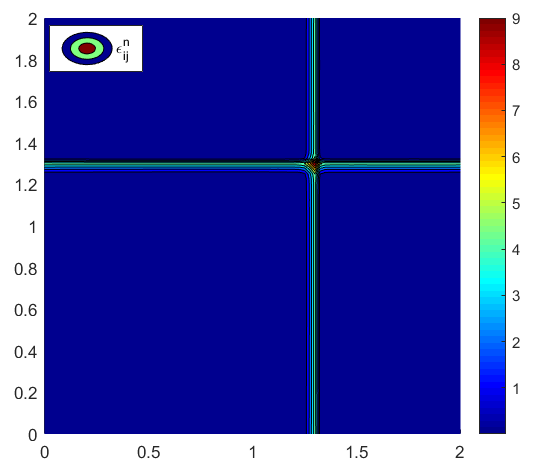}}
\end{subfigure}
\vspace{-0.1cm}
\caption{\footnotesize{Test 8. Results of $\varepsilon^n_{ij}$ (obtained using formula \eqref{eps_2D}, without computing the maximum on $\mathcal R^n$), for $\Delta x=\Delta y=0.025$. Plots at initial time (left), at time $T=3/(4\pi^2)$, when the solution is still smooth (middle), and at the final time $T=3/(2\pi^2)$, when the singularity is fully developed (right).}\label{ex4:fig3}}
\end{figure}
Moreover, we can clearly see that the basic filtered scheme depends heavily on the choice of $\varepsilon$. In fact, choosing $\varepsilon=20\Delta x$ leads to very good results in the singular case, whereas it has clear problems at exploiting the full accuracy of the high-order scheme when the solution is still regular. This is the main advantage of the adaptive $\varepsilon^n$, which is able to tune itself depending on the local (in time) regularity of the solution, as shown in Fig. \ref{ex4:fig3}. There we reported the results obtained by formula \eqref{eps_2D} without computing the maximum over $\mathcal R^n$, in the case of the initial condition and the two considered final times. We can notice that the value of $\varepsilon^n$ can consistently vary when the solution is regular, whereas it drops when the singularity appears, since all the high values are concentrated inside the region detected by the function $\phi$. 
In Fig. \ref{ex4:fig2} we have collected some results given by the smoothness indicators using the AF-RKC4 scheme with  $\Delta x=0.025$, which testify the reliability of our definition. The indicators are able to recognize the regularity of the solution in the first two cases (the $\phi$ function is identically equal to $1$) and to precisely locate the singularity after its development (right). Note that the formation of a singularity is already visible in the behavior of $g(\omega^F_{2D})$ (middle). 
The conclusions of these last two examples are rather promising, testifying the good properties of the Adaptive Filtered Schemes also in more space dimensions (see \cite{FPT18a, PaolucciPhD} for the one-dimensional version). It is interesting to notice that, although the wideness of the stencil seems to limit excessively the accuracy in the $L^\infty$ norm in presence of some singularity, the simple AF-RKC4 gave very good responses, especially in terms of sharpness of the representation and of accuracy in regions of regularity, achieving the optimal order in most of the simulations. 
Also in this last example, similar comments on CPU times can be made looking at Tab. \ref{ex4:table3}. 
\begin{table}[h!]
\caption{Test 8.$T=3/(2\pi^2)$. CPU times in seconds. \label{ex4:table3}}
\centering
\begin{tabular}{c c| p{1.7cm}  | p{1.7Cm} | p{1.7cm}  | c }
$N_x$ & $N_t$ & \centering {\bf  F-HC } & \centering {\bf AF-HC }& \centering {\bf AF-RKC4}& {\bf WENO 2/3}\\
\hline
\hline
$20$ &	$20$&\centering 	$0.004\ s$ 	&\centering 	$0.011\ s$ 	&\centering $0.012\ s$  &$0.019\ s$   \\

$40$ &	$40$ &\centering 	$0.012\ s$  &\centering 	$0.082\ s$   &\centering $0.084\ s$  	& $0.194\ s$    \\

$80$ &	$80$ &\centering 	$0.077\ s$  &\centering  	$0.636\ s$   &\centering $0.723\ s$   	& $1.796\ s$ \\

$160$ &	$160$ &\centering 	$0.531\ s$  &\centering  	$4.883\ s$   &\centering $5.091\ s$   	 & $8.363\ s$ 	\\
\hline
\end{tabular}
\end{table}

\section{Conclusions}\label{sec:conclusions}
In this paper we presented a novel formulation for smoothness indicators for non-differentiable functions in two spatial dimensions generalizing the definition of \cite{JP00}, originally devised for the construction of the well-known WENO schemes for Hamilton-Jacobi equations. We give a very compact explicit formula for the computation of the indicators, which is straightforward to implement. Moreover, we compared the results of two slightly different formulations and shown the improvements with respect to simple indicators based on dimensional splitting.\\
We have applied the 2D indicators to the construction  of our multidimensional AF schemes. The filtering process is able to stabilize an otherwise unstable (high-order) scheme, still preserving its accuracy. The schemes in this paper are improvements of those presented in \cite{BFS16} where the switching parameter $\varepsilon$ has to be tuned by hand, moreover we  generalize the adaptive construction presented in \cite{FPT18a,FPT17} to more spatial dimensions. The adaptive AF scheme is able to reduce the oscillations which may appear choosing a constant $\varepsilon$ and, as shown by the numerical tests, gives always better results.\\
The main advantage of the filtered schemes relies in their simple implementation and in the extreme generality allowed for the choice of the high-order scheme. In the latter context, the adaptive definition of the parameter $\varepsilon^n$ and the stabilization properties of the function $\phi$, represent a relevant improvement with respect to the basic scheme \cite{BFS16}. That is why no further limiting correction is needed, thus preventing the risk of losing accuracy when using schemes of order of accuracy higher than $2$. The general applicability of our procedure has been testified by the successful implementation of simple and efficient fourth-order schemes in two space dimensions.
\section*{Acknowledgments}
\noindent
The authors are members of the INdAM Research Group GNCS, and gratefully acknowledge its financial support to this research. 
This work has also been funded by MIUR national project PRIN 2017, grant n. 2017KKJP4X.

\bibliographystyle{plain}
\bibliography{refs}

\appendix
\section{Brief review of one-dimensional indicators}\label{sec:ind_1D}
\noindent Let us begin by considering the 1D case and show how to define a \emph{smoothness indicator function} $\phi$, such that 
\begin{equation}\label{eq:phi_1D}
\phi^n_j=\phi(\omega^n_j):=\left\{
\begin{array}{ll}
1\qquad&\textrm{ if the solution } u^n\textrm{ is regular in }I_j,  \\
0&\textrm{ if }I_j \textrm{ contains a point of singularity,}
\end{array}
\right.
\end{equation}
where $I_j=(x_{j-1},x_{j+1})$ and $\omega^n_j$ is the \emph{smoothness indicator} at the node $x_j$ depending on the values of the approximate solution $u^n$. We consider the indicators of \cite{JP00}, which basically exploit the properties of the divided differences of the solution $u^n$ as singularity detectors. More precisely, we first define the main ingredients of the indicators, i.e. 
\begin{equation}
\label{beta}
\beta_k = \beta_k(u^n)_j := \sum_{l=2}^r\int_{x_{j-1}}^{x_{j}} \Delta x^{2l-3} \left( P_k^{(l)}(x)\right)^2 dx,
\end{equation}
for $k=0,\dots,r-1$, where $P_k$ is the Lagrange polynomial of degree $r$ interpolating the values of $u^n$ on the stencil $\mathcal S_{j+k}=\{x_{j+k-r},\dots,x_{j+k}\}$.
Then, before proceeding with the construction of $\phi$, let us state a fundamental result on the behavior of the indicators (\ref{beta}), proved in \cite{FPT18a}, considering a generic function $f$ and dropping the superscript $n$ in order to simplify the notation.
\begin{proposition}
\label{prop_beta}
Assume $f\in C^{r+1}\left(\Omega\setminus\{\xi\}\right)$, with $\Omega$ a neighborhood of $\xi$, and $f^\prime(\xi^-)\not = f^\prime(\xi^+)$. Moreover, let $f^{\prime\prime}(x)\not=0$, $\forall x\in (\Omega\setminus\{\xi\})$. Then, for $k=0,\dots,r-1$ and $j\in \mathbb Z$, the followings are true:
\begin{enumerate}[i)]
\item If $x_s \in \Omega\ \setminus \stackrel{\circ}{\mathcal S}_{j+k}\quad \Rightarrow \quad\beta(f)_{k}=O(\Delta x^2)$;
\item If $x_s \in \ \stackrel{\circ}{\mathcal S}_{j+k}\quad \Rightarrow \quad \beta(f)_k=O(1)$,
\end{enumerate}
where $\mathcal S_{j+k}=\left\{x_{j-r+k},\dots,x_{j+k}\right\}$ and $\stackrel{\circ}{\mathcal S}_{j+k}=(x_{j-r+k},x_{j+k})$.
\end{proposition}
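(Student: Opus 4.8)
The plan is to follow the same strategy used in the proof of Proposition~\ref{prop_beta_2D}, specialized to one dimension: first rescale the integral so as to isolate the dependence on $\Delta x$, then reduce everything to the behavior of second-order undivided differences, which are the quantities that actually ``see'' the kink. First I would set $f_h(\theta):=f(x_j+\theta\Delta x)$ and $Q_k(\theta):=P_k(x_j+\theta\Delta x)$, so that $P_k^{(l)}(x)=\Delta x^{-l}Q_k^{(l)}(\theta)$ with $\theta=(x-x_j)/\Delta x$. Substituting into (\ref{beta}) and using $dx=\Delta x\,d\theta$ gives
\begin{equation*}
\beta_k=\frac{1}{\Delta x^{2}}\sum_{l=2}^{r}\int_{-1}^{0}\bigl(Q_k^{(l)}(\theta)\bigr)^2\,d\theta,
\end{equation*}
which already explains the role of the scaling exponent $2l-3$: it is precisely the one producing the common prefactor $\Delta x^{-2}$ for every $l$. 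Writing $Q_k$ in Newton form, its coefficients are the undivided differences $f_h[\cdots]_*$, and $Q_k^{(l)}$ involves only undivided differences of order $\ge l$, the lowest (hence dominant as $\Delta x\to0$) being those of order $l=2$. As in the two-dimensional argument, by the reduction formula for undivided differences (Lemma~A.1 of \cite{FPT18a}, the 1D version of (\ref{fh_il2}) with $l=2$) every higher-order undivided difference is a combination of second-order ones on sub-stencils, so the size of $\beta_k$ is governed entirely by second-order undivided differences.

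For point (i), since $x_s\notin\stackrel{\circ}{\mathcal S}_{j+k}$ the function $f$ is smooth on the (interior of the) stencil, so I would apply the classical divided-difference expansion $f[x_{j-r+k},\dots,x_{j-r+k+t}]=\tfrac{1}{t!}f^{(t)}(x_j)+O(\Delta x)$, equivalently $f_h[\cdots]_*=\Delta x^{t}f^{(t)}(x_j)+O(\Delta x^{t+1})$. This gives $Q_k^{(l)}(\theta)=O(\Delta x^{l})$, so the numerator of $\beta_k$ is $O(\Delta x^{4})$, dominated by the $l=2$ term, and hence $\beta_k=O(\Delta x^2)$. The assumption $f''\neq0$ guarantees that the $l=2$ term does not degenerate, so the order is exactly $\Delta x^2$.

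For point (ii), $x_s=\xi$ lies in the open stencil, so some second-order undivided difference straddles the kink. I would estimate it by Taylor-expanding the three involved nodal values from the appropriate side of $\xi$, using the one-sided derivatives $f'(\xi^\pm)$, exactly as in cases A2--A3 of Proposition~\ref{prop_beta_2D}. The constant and first-order terms cancel, leaving a contribution proportional to the jump $\mathcal D:=f'(\xi^+)-f'(\xi^-)$, so that this second-order undivided difference is $O(\Delta x)\,\mathcal D$ rather than $O(\Delta x^2)$. Since every other undivided difference is bounded by these second-order ones, $Q_k^{(l)}(\theta)=O(\Delta x)$ for all $l$, whence $\int_{-1}^0\bigl(Q_k^{(l)}\bigr)^2\,d\theta=O(\Delta x^2)$ and $\beta_k=\Delta x^{-2}\,O(\Delta x^2)=O(1)$, with $\mathcal D\neq0$ ensuring the bound is genuinely $O(1)$ and not smaller.

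The main obstacle I anticipate is bookkeeping rather than conceptual: one must check carefully that reducing all derivatives of $Q_k$ to second-order undivided differences introduces no hidden cancellation of the leading $O(\Delta x)$ term in case (ii), and one must treat the borderline configurations — the singularity sitting exactly on a grid node of the stencil, or at an endpoint in case (i) — where the relevant difference has to be expanded using the one-sided limits $f'(\xi^\pm)$, which are always well defined by hypothesis. These are precisely the 1D analogues of the case distinctions (A)--(B) handled in detail in Proposition~\ref{prop_beta_2D}, so the same Taylor-projection technique applies essentially verbatim.
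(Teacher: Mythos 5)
Your argument is correct and is essentially the approach the paper itself takes: the paper defers the 1D proof to \cite{FPT18a}, but its proof of the 2D analogue, Proposition \ref{prop_beta_2D}, follows exactly this route --- rescale via $f_h$ and $Q_k$ to extract the common $\Delta x^{-2}$ prefactor (this is the 1D specialization of \eqref{beta_2D_risc}), reduce all higher-order undivided differences to second-order ones via \eqref{fh_il2}, and Taylor-expand the nodal values from the two sides of the kink so that the surviving $O(\Delta x)$ term is proportional to the jump in $f'$. The one point you leave as ``bookkeeping'' --- ruling out cancellation so that the bound in case (ii) is genuinely of order one --- is treated at the same level of detail in the paper's own 2D proof, so your proposal matches it.
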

\begin{remark}
\label{def_sigma}
Notice that we could avoid the restrictions on $f$ in the points of regularity by adding a small quantity $\sigma_h=\sigma \Delta x^2$, for some constant $\sigma>0$, to the indicators $\beta_k$ and considering instead
\begin{equation}
\label{beta_k_sigma}
\widetilde{\beta}_k:=\beta_k+\sigma_h,
\end{equation}
as it has been done in \cite{ABM10}. We will use this assumption in the sequel, choosing $\sigma=1$.
\end{remark}
Our aim is to identify the points (or the intervals) in which the approximate solution $u^n$ presents a singularity in the first derivative. To be precise, here with $u^n$ we mean any continuous function with nodal values $u^n_j$, $j\in \mathbb Z$. Let us focus the attention on a point $x_j$ of the grid and consider the simplest case of $r=2$, which is enough for our purpose.
Let us consider separately the intervals $(x_{j-1},x_{j}]$ and $[x_{j},x_{j+1})$ defining
\begin{equation}\label{beta_k_m}
\beta^-_k=\Delta x\int_{x_{j-1}}^{x_{j}} ( P_k^{\prime\prime}(x))^2 dx = \left(\frac{f_{j-k}-2f_{j-1-k}+f_{j-2-k}}{\Delta x}\right)^2,
\end{equation}
for $k=0,1$, where $P_0$, $P_1$ are the polynomials interpolating the solution, respectively, on the stencils $\{x_{j-2},x_{j-1},x_{j}\}$ e $\{x_{j-1},x_{j},x_{j+1}\}$; and symmetrically
\begin{equation}\label{beta_k_p}
\beta^+_k=\Delta x\int_{x_{j}}^{x_{j+1}} ( P_k^{\prime\prime}(x))^2 dx = \left(\frac{f_{j+k-1}-2f_{j+k}+f_{j+k+1}}{\Delta x}\right)^2,
\end{equation}
for $k=0,1$, where now $P_0$, $P_1$ are the interpolating polynomials on $\{x_{j-1},x_{j},x_{j+1}\}$ and $\{x_{j},x_{j+1},x_{j+2}\}$, respectively. From the definition it is clear that $(\beta^+)_j=(\beta^-)_{j+1}$ so we have to compute the quantities just once.
Then, we define as in \cite{JP00}
\begin{equation}
\label{def_alpha_pm}
\alpha^\pm_k=\frac{1}{(\beta^\pm_k+\sigma_h)^2},
\end{equation}
with $\sigma_h=\sigma \Delta x^2$ the parameter we introduced in Remark \ref{def_sigma}, and focus on the information given by the interpolating polynomial on $\{x_{j-1},x_{j},x_{j+1}\}$ defining
\begin{equation}\label{omegaPM}
\omega_+=\frac{\alpha^+_0}{\alpha^+_0+\alpha^+_1} \qquad\textrm{ and }\qquad \omega_-=\frac{\alpha^-_1}{\alpha^-_0+\alpha^-_1}
\end{equation}
to inspect the regularity on $[x_j,x_{j+1})$ and on $(x_{j-1},x_j]$, respectively. Then, using Prop. \ref{prop_beta} and Remark \ref{def_sigma}, it can be shown that 
if the solution is regular enough in both stencils, then
\begin{equation}
\label{relazione_omega}
\omega_\pm=\frac{\alpha_k^\pm}{\alpha_0^\pm+\alpha^\pm_1}=\frac{1}{2}+O(\Delta x),
\end{equation}
with $k=0$ for the superscript ``$+$'' and $k=1$ for ``$-$''.
On the other hand, if there is a singularity in at least one of the stencils then
\begin{equation}
\alpha^\pm_k=
\left\{
\begin{array}{ll}
O(1)\qquad&\textrm{ if } f \textrm{ is not smooth in }\mathcal S_{j+k} \\
O(\Delta x^{-4})&\textrm{ if } f \textrm{ is smooth in }\mathcal S_{j+k}.
\end{array}
\right.
\label{alpha_01}
\end{equation}
Consequently, it is easy to verify that, defining $\omega_j=\min\{\omega_-,\omega_+\}$ we get 
\begin{equation}
\omega_j=
\left\{
\begin{array}{ll}
O(\Delta x^4)\qquad &\textrm{ if }x_{j-1}<x_s<x_{j+1} \\
\frac{1}{2}+O(\Delta x)& \textrm{ otherwise.}
\end{array}
\right.
\end{equation}
See \cite{FPT17,FPT18a} for more details and the proofs.
Unfortunately, we noticed through numerical tests that the $O(\Delta x)$ term in regular regions may produce heavy oscillations around the optimal value $\overline\omega= 1/2$. To increase the accuracy, we can use higher order smoothness indicator ($r>2$), but we would need a bigger reconstruction stencil, or we can use the \emph{mappings} defined in \cite{HAP05},
\begin{equation}
g(\omega)=\frac{\omega(\overline \omega+\overline \omega^2-3 \overline \omega \omega +\omega^2)}{\overline \omega^2 +\omega(1-2\overline \omega)},\qquad \overline \omega \in (0,1)
\label{map}
\end{equation}
which have the properties that $g(0)=0$, $g(1)=1$, $g(\overline \omega)=\overline \omega$, $g^\prime(\overline \omega)=0$ and $g^{\prime\prime}(\overline \omega)=0$. Using Taylor expansion around $\overline \omega$, this directly implies that, if we compute $\omega_\pm^*=g(\omega_\pm)=\overline \omega +O(\Delta x^3)$, then the amplitude of the oscillations is drastically reduced.
Notice that with respect to the definition in \cite{HAP05} we avoided the second weighting which seems unnecessary in our case. More explicitly, the mapping we use is
\begin{equation}
 \label{map2}
g(\omega)=4\omega\left(\frac{3}{4}-\frac{3}{2}\omega+\omega^2\right).
\end{equation}

Another useful technique to reduce the oscillations, which in particular does not require any mapping, has been proposed in the context of hyperbolic conservation laws in \cite{BCCD08} and further generalized in \cite{CCD11}, leading to the definition of the so-called \emph{WENO-Z} schemes. This procedure can be applied for our purpose without any relevant change. For example, for $r=2$, it is implemented first defining
\begin{equation}
\label{tau_z}
\tau^{\pm}:=\left|\beta_0^\pm-\beta_1^\pm\right|,
\end{equation}
which has the properties:
\begin{itemize}
\item If $f$ is smooth in, respectively, $\mathcal S^\pm:=\mathcal S_0^\pm\bigcup \mathcal S_1^\pm$, then $\tau^\pm=O(\Delta x^3)$;
\item If $f$ is smooth in some $\mathcal S_k^\pm$, but not in $\mathcal S^\pm$, then $\tau^\pm\gg \beta_k^\pm$;
\item $\tau^\pm\leq \max_k \beta_k^\pm$.
\end{itemize}
Then, analogously to the usual WENO procedure, the final indicator is obtained computing
\begin{equation}
\label{WENO_Z}
\alpha_k^{Z,\pm}=\frac{1}{2}\left(1+{\left(\frac{\tau^\pm}{\beta_k^\pm +\sigma_h}\right)}^p\right),\qquad \omega_\pm^Z=\frac{\alpha^{Z,\pm}_\nu}{\alpha^{Z,\pm}_0+\alpha^{Z,\pm}_1},
\end{equation}
for $k=0,1$, with $p=2$, $\nu=1$ for the superscript `$-$' and $\nu=0$ for `$+$'. Unfortunately, applying directly the `WENO-Z' procedure to Hamilton-Jacobi equations, in particular in our context, we do not achieve improvements comparable to those obtained through the mapping (\ref{map2}). In fact, using similar arguments of the previous lines, it is straightforward to show that the resulting smoothness indicators are such that
\begin{equation}
\omega_\pm^Z=\frac{1}{2}+O(\Delta x^2),
\end{equation}
and thus produce slightly wider oscillations around the optimal value. This problem would probably suggest to increase to power $p$ in (\ref{WENO_Z}), but in doing so also the dependence on the magnitudes of successive derivatives of $f$ is increased, producing even wider oscillations for coarser grids, thus discouraging such an approach.

\begin{remark}\label{remark_tau_new}
It is worth of notice that, in the case $r=2$, making use of the full stencil $\{x_{j-2},\dots,x_{j+2}\}$ in defining $\tau^\pm$ in (\ref{tau_z})-(\ref{WENO_Z}) we can obtain better results in regular regions with respect to those of the mapped indicators. More precisely, if for both cases we define
\begin{equation}
\label{tau_new}
\tau_{new}=\left|\beta_0^--2\beta_1^-+\beta_1^+\right|,
\end{equation}
using Taylor expansions we have that
\begin{equation}
\label{sviluppo_tau_new}
\tau_{new}=2\Delta x^4\left((f_j^{\prime \prime\prime})^2+f_j^{\prime\prime}f_j^{(4)}\right)+O(\Delta x^5)=O(\Delta x^4),
\end{equation}
if the function is smooth in the full stencil. Consequently, it can be shown that the indicators computed through (\ref{WENO_Z})-(\ref{tau_new}), are such that
\begin{equation}
\omega^Z_{new}=\frac{1}{2}+O(\Delta x^4),
\end{equation}
in regions of regularity and have similar behavior with respect to the previous constructions in the remaining cases.
\end{remark}
For more details about the computations as well as other possible constructions, e.g. indicators with $r>2$, we refer the interested reader to \cite{PaolucciPhD}.
\end{document}